\documentclass[11pt]{article}
\usepackage[top=1in, bottom=1in, left=1in,right=1in]{geometry}
\usepackage{amsmath}
\usepackage{amssymb}
\usepackage{color}
\usepackage{amsthm}
\usepackage{mathrsfs}   %font%
\usepackage{theoremref}
\usepackage{stix}
\usepackage{hyperref}
\usepackage{graphicx}

\usepackage[all]{xy}
\usepackage{tikz-cd}

    \newcommand\lmod[2]{
        \mathchoice
            {% \displaystyle
                \text{\raise1ex\hbox{$#1$}\Big/\lower1ex\hbox{$#2$}}%
            }
            {% \textstyle
                #1\,/\,#2
            }
            {% \scriptstyle
                #1\,/\,#2
            }
            {% \scriptscriptstyle  
                #1\,/\,#2
            }
    }

\newtheorem{defn}{Definition}[section]
\newtheorem{thm}[defn]{Theorem}
\newtheorem*{thm*}{Theorem}
\newtheorem{lem}[defn]{Lemma}  %the [thm] means lem and thm use same numbering

\newtheorem{cor}[defn]{Corollary}
\newtheorem{conj}[defn]{Conjecture}
\theoremstyle{definition}
\newtheorem*{rmk}{Remark}
\theoremstyle{definition}

%\titleformat{\section}{\large\bfseries}{ }{12pt}{}

  \newcommand{\F}{\mathbb{F}}

\newcommand{\Gal}{\operatorname{Gal}}
\newcommand{\disc}{\operatorname{disc}}
\newcommand{\Frob}{\operatorname{Frob}}

%Set the depth of table of contents
%\setcounter{tocdepth}{2} 

\title{Counting pairs of conics over finite fields that satisfy the Poncelet $n$-gon condition}
\author{Tianhao Wang \\ \href{mailto:tianhw11@uci.edu}{tianhw11@uci.edu}}
% TODO, email address
\date{\today}

\begin{document}
\maketitle

\begin{abstract}
An ordered pair of smooth conics satisfies the Poncelet triangle condition if there is a triangle inscribed in the first conic and circumscribed in the second conic. Over a finite field $\mathbb{F}_q$ with characteristic greater than $3$, Chipalkatti showed that the density of pairs of smooth conics  satisfying the Poncelet triangle condition is $\frac{1}{q}+O(q^{-2})$. We improve this result, showing that the density is exactly $\frac{q-1}{q^2-q+1}$.

We consider the problem of determining the density of pairs of conics satisfying the Poncelet $n$-gon condition for larger $n$.  We prove a corrected version of a conjecture of Chipalkatti, showing that the proportion of pairs of smooth conics satisfying the Poncelet tetragon condition is $\frac{1}{q} + O(q^{-{3/2}})$.  We show that when $n$ is an odd integer coprime to $q$, the density of pairs of smooth conics satisfying this condition is $\frac{d(n)-1}{q}+O(q^{-3/2})$, where $d(n)$ is the number of divisors of $n$. More generally, we conjecture that the density of pairs of conics satisfying the Poncelet $n$-gon condition is $d'(n)/q$ in general, where $d'(n)$ is the number of divisors of $n$ not equal to $1$ or $2$.  Our argument involves analyzing the $n$-torsion points on a certain elliptic curve over the function field $K = \mathbb{F}_q(\lambda)$.

\end{abstract}

%\tableofcontents

\section{Introduction}
\subsection{Poncelet triangles and Poncelet $n$-gons}
Let $(\mathcal{A}, \mathcal{B})$ be an ordered pair of smooth plane conics defined over a field $k$ with transversal intersection.  We say that $(\mathcal{A}, \mathcal{B})$ \textbf{satisfies the Poncelet triangle condition} if there is a triangle defined over $\overline{k}$ that is inscribed in $\mathcal{A}$ and circumscribed in $\mathcal{B}$. 

\begin{figure}[h]
    \centering
    \includegraphics[width=5cm]{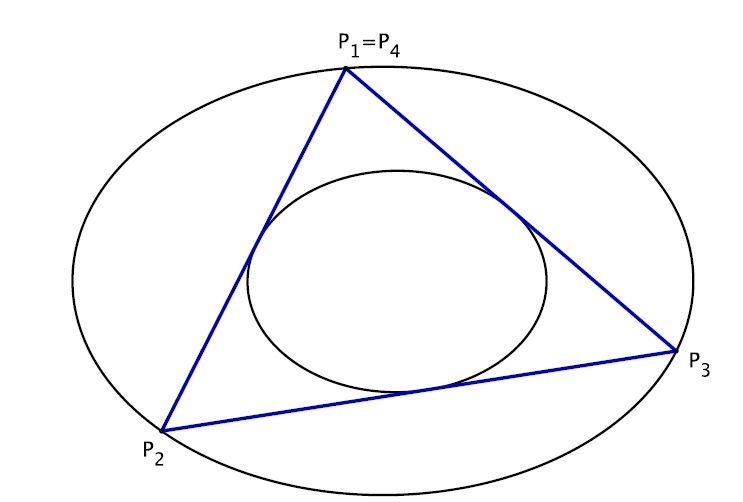}
    \caption{An example of a Poncelet triangle. This image is taken from \cite{Chi}.}
\end{figure}

We are interested in the density of pairs of smooth conics satisfying the Poncelet triangle condition when $k = \mathbb{F}_q$. We introduce the \textbf{Poncelet construction} to explain what we mean by "inscribed in $\mathcal{A}$ and circumscribed in $\mathcal{B}$" in an arbitrary field $k$. 

Suppose that the conics $\mathcal{A}$ and $\mathcal{B}$ intersect transversally. Let $\mathcal{B}^*$ be the dual conic of $\mathcal{B}$,  consisting of tangent lines to $\mathcal{B}$. We define $E\subset\mathcal{A}\times \mathcal{B}^*$ as the set of all $(P, \xi)$ where $P\in \xi$.  The idea of the Poncelet construction is that we start from $(P, \xi)\in E$.  The tangent line $\xi$ intersects $\mathcal{A}$ at another point $P'$, and we let $\xi'$ be the other tangent line to $\mathcal{B}$ passing through $P'$. This defines an operation 
\begin{equation*}
     j: (P, \xi) \mapsto (P', \xi').
\end{equation*}

\begin{figure}[h]
    \centering
    \includegraphics[width=7cm]{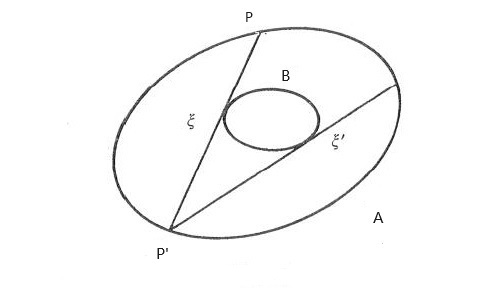}
    \caption{The Poncelet construction. This image is taken from \cite{GH}.}
\end{figure}

The pair $(\mathcal{A}, \mathcal{B})$ \textbf{satisfies the Poncelet triangle condition} if there is $(P,\xi)\in E$ such that $j^3(P, \xi) = (P, \xi)$. We also call $(P, \xi)$ the initial data for the construction. More generally, we say that $(\mathcal{A}, \mathcal{B})$ \textbf{satisfies the Poncelet $n$-gon condition}  if there is $(P,\xi)\in E$ such that $j^n(P,\xi) = (P, \xi)$. 

We observe that $E$ is a ramified cover of $\mathcal{A}$ via the natural projection. This is a degree $2$ covering with $4$ ramification points, each of ramification index $2$, corresponding to the $4$ intersection points of $\mathcal{A}$ and $\mathcal{B}$. Hence, by the Riemann-Hurwitz formula, we know that $E$ has genus $1$, and so it is an elliptic curve. Griffiths and Harris showed that $E$ is birationally equivalent to the elliptic curve with equation 
$$y^2 = \det(xA+B)$$
where $A,B$ are the matrix representation of the conics $\mathcal{A}, \mathcal{B}$ \cite{GH} . They further give a criterion for $(\mathcal{A}, \mathcal{B})$ to satisfy the Poncelet $n$-gon condition in terms of this elliptic curve.  

\begin{thm}[Griffiths-Harris]\label{Thm:GH}\par
Let $(\mathcal{A}, \mathcal{B})$ be an ordered pair of smooth plane conics with transversal intersection, and $(A,B)$ be their matrix representations. Then the pair $(\mathcal{A}, \mathcal{B})$ satisfies the Poncelet $n$-gon condition if and only if the point $(0, \sqrt{\det(B)})$ is an $n$-torsion point of the elliptic curve $y^2 = \det(xA+B)$. 
\end{thm}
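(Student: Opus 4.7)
The plan is to realize the Poncelet map $j\colon E\to E$ as translation by a specific point on the elliptic curve $E$, and then identify that point with $(0,\sqrt{\det B})$ on the Weierstrass model $W\colon y^2 = \det(xA+B)$.

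First, I would factor $j$ as a product of two involutions on $E$. Define $\sigma_1(P,\xi) = (P',\xi)$, where $P'$ is the other intersection of $\xi$ with $\mathcal{A}$, and $\sigma_2(P,\xi) = (P,\xi')$, where $\xi'$ is the other tangent to $\mathcal{B}$ through $P$. Both $\sigma_i$ are involutions on $E$, and directly from the definition of the Poncelet construction, $j = \sigma_2 \circ \sigma_1$. Moreover each $\sigma_i$ has fixed points, since the projections $E\to\mathcal{B}^*$ and $E\to\mathcal{A}$ are ramified (over the four common tangents, respectively over $\mathcal{A}\cap\mathcal{B}$).

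Next, pick any base point $O\in E$ to make $E$ an abelian group. Any involution on an elliptic curve with a fixed point is of the form $R\mapsto c-R$, so $\sigma_i(R) = c_i - R$ for some $c_i\in E$. Consequently,
$$j(R) = c_2 - (c_1 - R) = (c_2 - c_1) + R,$$
so $j$ is translation by $\tau_0 := c_2 - c_1$. Therefore the Poncelet $n$-gon condition is equivalent to $n\tau_0 = O$, i.e.\ $\tau_0$ being an $n$-torsion point of $E$; as a bonus this also gives Poncelet closure, since whether $j^n = \mathrm{id}$ depends only on $\tau_0$ and not on the initial data $(P,\xi)$.

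It remains to match $\tau_0$ with the explicit Weierstrass point $(0,\sqrt{\det B})$. I would construct an explicit birational equivalence $\Phi\colon E\dashrightarrow W$ adapted to the pencil of conics $\{xA+B\}_{x\in\mathbb{P}^1}$: the $x$-coordinate should record a member of this pencil determined by $(P,\xi)$, in such a way that the value $x=0$ singles out the distinguished conic $\mathcal{B}$ and $x=\infty$ singles out $\mathcal{A}$; then $y$ is the induced square root of $\det(xA+B)$. Taking the origin $O$ to be the point at infinity of $W$, the hyperelliptic involution $(x,y)\mapsto (x,-y)$ matches inversion on $E$, and the four ramification points of the $x$-projection are the four roots of $\det(xA+B)$, matching the four fixed points of the appropriate $\sigma_i$. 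The main obstacle is precisely this explicit coordinate matching: choosing $\Phi$ so that the geometric involutions $\sigma_1,\sigma_2$ become transparent in Weierstrass coordinates, and then checking that $\tau_0 = c_2 - c_1$ is exactly the point $(0,\sqrt{\det B})$ and not, say, its negative or a 2-torsion translate. Once that computation is done, the $n$-torsion criterion is immediate from the translation identity above.
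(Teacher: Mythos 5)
The paper does not prove Theorem~\ref{Thm:GH}; it quotes it from Griffiths--Harris \cite{GH}. So there is no internal proof to compare against, and your proposal should be judged as a reconstruction of the Griffiths--Harris argument.

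Your outline correctly captures the structural half of that argument: $j = \sigma_2\circ\sigma_1$ is a composition of two involutions on the genus-one curve $E$, each with fixed points (the four common tangents for $\sigma_1$, the four points of $\mathcal{A}\cap\mathcal{B}$ for $\sigma_2$), so each is inversion about a point after a choice of origin, hence $j$ is a translation $\tau_0 = c_2 - c_1$, and ``Poncelet $n$-gon'' becomes ``$n\tau_0 = O$.'' That is the right skeleton and it does give Poncelet closure for free. But the entire content of the theorem as stated is the \emph{identification} of $\tau_0$ with the specific point $(0,\sqrt{\det B})$ on the specific Weierstrass model $y^2 = \det(xA+B)$, and that is exactly the step you label ``the main obstacle'' and do not carry out. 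Without it, what you have proved is only that there exists \emph{some} point $\tau_0\in E$ such that the Poncelet $n$-gon condition is equivalent to $n\tau_0 = O$ --- a weaker statement that does not single out $(0,\sqrt{\det B})$.

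The gap is not cosmetic. The identification has to be done up to a 2-torsion ambiguity at minimum, and a 2-torsion discrepancy would give a genuinely different theorem for even $n$ (the ambiguity in sign $\pm\sqrt{\det B}$, by contrast, is harmless since it is just $\tau_0\mapsto -\tau_0$). Carrying out the identification requires actually constructing the birational map $E\dashrightarrow W$ in terms of the pencil $\{xA+B\}$ --- roughly, sending $(P,\xi)$ to the parameter $x$ of the unique conic in the pencil tangent to $\xi$ at $P$, together with a choice of square root of $\det(xA+B)$ --- and then tracking through how $\sigma_1$ and $\sigma_2$ act on $(x,y)$. This is the computational core of the Griffiths--Harris paper and cannot be waved away; your proposal names it but does not do it. As a proof of the precise statement in Theorem~\ref{Thm:GH}, the argument is therefore incomplete.
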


One consequence of this theorem is that the initial data $(P, \xi)$ for the Poncelet construction does not matter. If there is a Poncelet $n$-gon with some initial data, then any initial data will give a Poncelet $n$-gon. This is the statement of Poncelet's theorem.

\subsection{Main results}
The first density result for pairs of conics satisfying the Poncelet triangle condition was obtained by Chipalkatti \cite{Chi}.

\begin{thm}\cite[Theorem 2.1]{Chi}\label{Thm:Chi} \par
    Let $\mathbb{F}_q$ be a finite field with characteristic greater than $3$. Let $\Psi$ be the set of ordered pairs of smooth conics with transversal intersection, and $\Gamma_3$ be the subset consisting of those satisfying the Poncelet triangle condition. Then we have 
    $$\frac{q-16}{q(q+1)}\leq \frac{|\Gamma_3|}{|\Psi|}\leq \frac{q+5}{(q-2)(q-3)}.$$
    In particular $\frac{|\Gamma_3|}{|\Psi|} = \frac{1}{q}+O\left(\frac{1}{q^2}\right)$. 
    If $\mathbb{F}_q$ has characteristic $3$, then $\frac{|\Gamma_3|}{|\Psi|} = \frac{2}{q}+O\left(\frac{1}{q^2}\right)$. 
\end{thm}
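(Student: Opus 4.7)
The plan is to invoke Theorem~\ref{Thm:GH} to convert the Poncelet triangle condition into an explicit polynomial equation in the coefficients of $A$ and $B$, and then count ordered pairs satisfying this equation using a convenient normalization under the $\mathrm{PGL}_3$-action.

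By Griffiths--Harris, $(\mathcal{A}, \mathcal{B})$ is Poncelet-$3$ iff $P_0 = (0, \sqrt{\det B})$ is a $3$-torsion point on $E \colon y^2 = \det(xA + B)$.  Write $\det(xA + B) = \alpha x^3 + \beta x^2 + \gamma x + \delta$, so that $\delta = \det B$.  A point $P_0$ has order $3$ precisely when $[2]P_0 = -P_0$, which, since $x(-P_0) = x(P_0) = 0$, means $x([2]P_0) = 0$.  Computing $[2]P_0$ via the tangent line at $P_0$ (the tangent has slope $m = \gamma/(2\sqrt{\delta})$, and the $x$-coordinate of its residual intersection with $E$ is $(m^2 - \beta)/\alpha$) collapses this to the single polynomial relation
\[ \gamma^2 = 4\beta\delta. \]
Since $\beta = \operatorname{tr}(\operatorname{adj}(A)\, B)$ and $\gamma = \operatorname{tr}(A\, \operatorname{adj}(B))$, the locus $\Gamma_3 \subset \Psi$ is cut out by a bihomogeneous polynomial of bidegree $(2,4)$ in the entries of $A$ and $B$.

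I would then parameterize generic elements of $\Psi$ via simultaneous diagonalization: after a $\mathrm{PGL}_3$-change of coordinates one may assume $A = I$ and $B = \operatorname{diag}(b_1, b_2, b_3)$ with distinct $b_i$, so that $\det(xA + B) = \prod_i (x + b_i)$.  The Poncelet-$3$ condition then reads $e_2^2 = 4 e_1 e_3$, where $e_i$ denotes the $i$-th elementary symmetric polynomial in $b_1, b_2, b_3$.  The $\mathbb{F}_q$-points on this affine surface in $\mathbb{A}^3$ can be counted by fixing $b_1, b_3$ generically and solving a quadratic in $b_2$, yielding a leading count of order $q^2$.  Multiplying by the $\mathrm{PGL}_3(\mathbb{F}_q)$-orbit size, dividing by the $S_3$-permutation symmetry and by scalar rescaling of $(A, B)$, and performing the analogous unconstrained count for $|\Psi|$, the common factor $|\mathrm{PGL}_3(\mathbb{F}_q)|/|S_3|$ cancels in the ratio and produces the dominant term $|\Gamma_3|/|\Psi| = 1/q + O(q^{-2})$.

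The principal obstacle is sharpening this asymptotic to the stated two-sided inequality.  One must stratify carefully by the degeneracies of the pencil $\{xA + B\}$: coincidences $b_i = b_j$ (corresponding to tangency of $\mathcal{A}$ and $\mathcal{B}$), pencils for which $\det(xA + B)$ is $\mathbb{F}_q$-irreducible or has an irreducible quadratic factor (so that simultaneous diagonalization requires an extension, introducing Galois-twisted strata), and vanishing of subleading discriminants in the $b_2$-count.  Tracking these contributions yields the correction terms $\frac{q-16}{q(q+1)}$ and $\frac{q+5}{(q-2)(q-3)}$, which reflect the splitting type of the four intersection points of $\mathcal{A}$ and $\mathcal{B}$.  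The characteristic-$3$ statement requires a separate analysis, because the equation $\gamma^2 = 4\beta\delta$ degenerates to $\gamma^2 = \beta\delta$ and the chord-tangent derivation above interacts differently with the $3$-torsion subgroup of $E$, which altogether accounts for the doubled density $2/q$ in that case.
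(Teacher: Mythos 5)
The proposal correctly identifies the governing equation: starting from Theorem~\ref{Thm:GH}, the chord-tangent computation of $x([2]P_0)$ yields $\gamma^2 = 4\beta\delta$, and one can check that this is the same condition as Cayley's $H_2 = 0$ (Theorem~\ref{Thm:Cayley}). The broad strategy---stratify pairs by the $\Gal(\overline{\F_q}/\F_q)$-action on the four base points of the pencil, count per stratum, and sum---is also the one used by Chipalkatti (and refined in Section~\ref{triangle} of this paper). So conceptually the route is aligned. However, there are concrete gaps that would need to be filled before this is a proof.

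First, the simultaneous-diagonalization normalization $A = I$, $B = \operatorname{diag}(b_1, b_2, b_3)$ only parameterizes pairs whose pencil has all three singular members defined over $\F_q$, and even among those, only the ones where $B$ is actually $\F_q$-diagonalizable by an $\F_q$-orthogonal change of basis. This captures essentially one or two of the five Dickson classes. For the strata of type $(2,1,1)$, $(4)$ and especially $(3,1)$, $B$ is not $\F_q$-diagonalizable, and one must work with twisted forms: the paper (following Chipalkatti) handles this by choosing explicit generators for each pencil type (e.g.\ $G = y^2 + yz + xz + ez^2$ with $T^2 + T + e$ irreducible, or $G = x^2 + by^2 + cxy + yz$ with an irreducible cubic) and recomputing $H_2(r,s)$ anew for each stratum. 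You acknowledge this as ``the principal obstacle'' but the counting in those strata is where the precise error terms $\frac{q-16}{q(q+1)}$ and $\frac{q+5}{(q-2)(q-3)}$ actually arise, so waving it away leaves the stated bounds unproven.

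Second, the orbit-stabilizer cancellation ``$|\mathrm{PGL}_3(\F_q)|/|S_3|$ cancels in the ratio'' is not correct as stated. The stabilizer of a generic pair $(I, \operatorname{diag}(b_1,b_2,b_3))$ in $\mathrm{PGL}_3$ consists of signed permutation matrices modulo scalars, not $S_3$ alone, and more importantly the orbit and stabilizer structure \emph{changes} from stratum to stratum (e.g., for type $(3,1)$ the relevant symmetry is a $3$-cycle, not all of $S_3$). The paper avoids this bookkeeping entirely by counting solutions to $H_2(r,s)=0$ \emph{inside a fixed pencil} and then multiplying by the number of pencils of each type (given by the table in Section~2.1); that count already packages the orbit sizes. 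Your approach would need an equally careful accounting.

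Finally, the characteristic-$3$ discussion is a heuristic that does not explain the doubled density $2/q$: the replacement of $4$ by $1$ in $\gamma^2 = 4\beta\delta$ is an innocuous rescaling, and on the face of it the chord-tangent criterion is unchanged. The actual reason lies in the inseparability of $[3]$ in characteristic $3$ (so that the formal criterion $H_2 = 0$ no longer agrees with the torsion condition in the expected way), which requires a separate analysis; the proposal does not identify the mechanism. Since the paper restricts to characteristic $>3$ in what it proves, this is a minor point, but the explanation offered would mislead.
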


The idea in Chipalkatti's proof is to first classify pairs of smooth conics with transversal intersection up to projective automorphisms defined over $\mathbb{F}_q$.  Such a class is determined by the field of definition of the four intersection points of the two conics, and there are exactly $5$ cases to consider.  Chipalkatti gives the exact count of pairs of smooth conics satisfying the Poncelet triangle condition in the case where each of the four intersection points is defined over $\mathbb{F}_q$ and gives bounds for the density when the conics have the other possible intersection types.  The density turns out to be asymptotically $1/q$ in each of the classes.

We follow the strategy of Chipalkatti and obtain the precise count for $|\Gamma_3|$. 
\begin{thm}
If $\mathbb{F}_q$ has characteristic greater than $3$, then
    $$|\Gamma_3| =  (q^5-q^2)(q + 1)q(q - 1)^2,$$
    and therefore
    $$\frac{|\Gamma_3|}{|\Psi|} = \frac{q-1}{q^2-q+1}.$$
\end{thm}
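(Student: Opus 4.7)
The plan is to follow Chipalkatti's strategy of stratifying $\Psi$ by intersection type, and to carry out the enumeration exactly in each of the four cases where \cite{Chi} only gave bounds. The five strata $\Psi_1, \dots, \Psi_5$ are indexed by the partitions of $4$ describing the $\Gal(\overline{\mathbb{F}}_q/\mathbb{F}_q)$-orbit structure on $\mathcal{A} \cap \mathcal{B}$: namely $(1,1,1,1)$, $(1,1,2)$, $(2,2)$, $(1,3)$, and $(4)$. The excerpt tells us that $|\Gamma_3 \cap \Psi_1|$ has already been computed exactly by Chipalkatti, so the task is to produce an exact count in the remaining four strata and then sum.

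In each stratum I would first produce a normal form for the pair $(\mathcal{A}, \mathcal{B})$ under the action of $PGL_3(\mathbb{F}_q)$, using the group to normalize the four intersection points (viewed as a Galois-stable degree-$4$ subscheme of $\mathbb{P}^2$) to a fixed configuration. This reduces each pair to a small parameter family $(\mathcal{A}_0, \mathcal{B}_\lambda)$, essentially a basis of the pencil of conics through the prescribed four-point scheme, with $\lambda$ ranging over an appropriate subset of $\overline{\mathbb{F}}_q$ that is compatible with the Galois structure of the stratum. By Theorem~\ref{Thm:GH}, the Poncelet triangle condition for $(\mathcal{A}_0, \mathcal{B}_\lambda)$ is equivalent to the $3$-division polynomial $\psi_3$ of the elliptic curve $y^2 = \det(xA_0 + B_\lambda)$ vanishing at $x = 0$. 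Since $\psi_3$ has degree $4$ in $x$, evaluating it at $x = 0$ yields a single polynomial equation in $\lambda$ of small degree, which can be solved explicitly.

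Assembling these ingredients via the orbit--stabilizer formula gives the contribution of each stratum as
\begin{equation*}
    |\Gamma_3 \cap \Psi_i| \;=\; \frac{|PGL_3(\mathbb{F}_q)|}{|\mathrm{Stab}|} \cdot N_i,
\end{equation*}
where $N_i$ is the number of parameter values $\lambda$ for which the $3$-torsion condition holds, corrected for any residual symmetry in the normal form (such as permutations of the intersection points that preserve the stratum), and $\mathrm{Stab}$ is the common stabilizer of the normalized configuration. Summing the five contributions should yield $|\Gamma_3| = (q^5 - q^2)(q + 1)q(q-1)^2$; the density formula $(q-1)/(q^2 - q + 1)$ then follows by dividing by the analogous computation for $|\Psi|$ that can be extracted from Chipalkatti's stratification.

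The main obstacle is the exact enumeration in the strata where the intersection points lie in proper extensions of $\mathbb{F}_q$. There the parameter $\lambda$ is forced into $\mathbb{F}_{q^2}$, $\mathbb{F}_{q^3}$, or $\mathbb{F}_{q^4}$, and the Galois-invariance of the pair constrains both the normal form and the admissible $\lambda$; the stabilizer of the configuration in $PGL_3(\mathbb{F}_q)$ may be nontrivial (involving Frobenius-compatible permutations of the intersection points), so careful bookkeeping is required to avoid overcounting. Moreover, the root count of the $3$-division polynomial in $\lambda$ can depend on arithmetic properties of $q$ such as the splitting of $3$-torsion, so the uniformity of the final answer emerges only after a cancellation across the strata, and matching it to the clean closed form $q^3(q-1)^2(q+1)(q^2+q+1)$ serves as a useful internal consistency check.
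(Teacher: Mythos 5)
Your strategy --- stratify $\Psi$ by the Galois orbit structure of the four intersection points, normalize each stratum under $PGL_3(\mathbb{F}_q)$, and count parameter values satisfying the $3$-torsion condition --- is exactly the one the paper executes in Section~\ref{triangle}. The orbit--stabilizer bookkeeping you describe is an equivalent reformulation of the paper's ``(pairs per pencil) $\times$ (number of pencils of that type)'' accounting, and your appeal to the $3$-division polynomial vanishing at $x=0$ is the same criterion as the paper's use of Cayley's $H_2 = 0$. So the approach matches.

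However, as written the proposal is a plan rather than a proof: the case-by-case enumeration in the four remaining strata, which you correctly identify as the main obstacle, is precisely the content of the paper's argument and is not carried out here. The paper does this by choosing explicit generators $F,G$ of a representative pencil, writing both conics as $C_r = rF+G$, $C_s = sF+G$, and solving $H_2(r,s)=0$ as a quadratic in $r$ by analyzing the factorization of $(\disc H_2)(s)$. Note that your description of the normalized family as $(\mathcal{A}_0, \mathcal{B}_\lambda)$ with a single parameter $\lambda$ is not quite right: after normalizing the degree-$4$ subscheme, the finite residual stabilizer cannot reduce the $\mathbb{P}^1$ of choices of $\mathcal{A}$ in the pencil to a single conic (the cross-ratio of the four points on $\mathcal{A}$ is a $PGL_3$-invariant that varies with $\mathcal{A}$). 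Both members must vary, giving a two-parameter count $(r,s)$; the one-parameter picture would undercount. You would discover this immediately on attempting the computation, but it is a gap in the plan as stated. Finally, your consistency-check factorization has an arithmetic slip: $(q^5-q^2)(q+1)q(q-1)^2 = q^3(q-1)^3(q+1)(q^2+q+1)$, with $(q-1)^3$ rather than $(q-1)^2$.
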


Let $\Gamma_n$ be the subset of $\Psi$ consisting of pairs of smooth conics satisfying the Poncelet $n$-gon condition. Chipalkatti also made a conjecture based on computational results that $|\Gamma_n|/|\Psi|\sim \mu_n/q$ where 
$$\mu_4 = 3, \mu_5 = 1, \mu_6 = 4, \mu_7 = 1, \mu_8 = 6, \mu_9 = 2$$
However, it seems that the data was obtained from computing the density in a fixed class of smooth conic pairs. We prove that for $n = 4$, the density of pairs of smooth conics satisfying the Poncelet $n$-gon condition is different in different classes. Averaging over the possible intersection types, we prove that $\mu_4 = 1$, which is different from the conjectured value.  We again follow the strategy of Chipalkatti, where we also use the Hasse-Weil estimate for the number of $\mathbb{F}_q$-rational points of a curve. However, it is hard to proceed beyond $n=4$ because of the complexity of the equations involved. 

In Section \ref{Sec:ngon}, we return to Theorem \ref{Thm:GH} and describe the connection between the density of pairs of smooth conics with a given intersection type satisfying the Poncelet $n$-gon condition and the $n$-torsion points of an elliptic curve over $\mathbb{F}_q(\lambda)$.
\begin{thm}(Main result on the $n$-torsion points of a family of elliptic curves) \label{thm:main1} \par
    Let $\mathbb{F}_q$ be a finite field with characteristic greater than $3$. Let $E$ be a non-isotrivial elliptic curve defined over the function field $K = \mathbb{F}_q(\lambda)$. We denote the reduction of $E$ at a degree $1$ place $\lambda_0\in\mathbb{P}^1(\mathbb{F}_q)$ by $E_{\lambda_0}$. Let $r(n, \lambda_0)$ be the number of roots of the $n$-th division polynomial $\Lambda_n(x, \lambda_0)$ of $E_{\lambda_0}$ defined over $\mathbb{F}_q$. 
    
    Then, for all but finitely many odd numbers $n$ coprime to $q$, we have
    $$\sum_{\lambda_0}r(n,\lambda_0) = (d(n)-1)q+O(\sqrt{q})$$
    where the sum is over all $\lambda_0\in\mathbb{P}^1(\mathbb{F}_q)$ such that $E$ has a good reduction at $\lambda_0$. 

\end{thm}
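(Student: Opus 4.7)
The plan is to translate the sum $\sum_{\lambda_0} r(n,\lambda_0)$ into a count of $\F_q$-points on an affine plane curve, decompose that curve into pieces indexed by the divisors $d\mid n$ with $d>1$, and apply the Hasse-Weil bound to each piece. To begin, factor the $n$-th division polynomial over $K=\F_q(\lambda)$ as
\[
\Lambda_n(x,\lambda) \;=\; \prod_{d\mid n,\,d>1}\Phi_d(x,\lambda),
\]
where the roots of $\Phi_d$ are precisely the $x$-coordinates of the points of exact order $d$ on the generic fibre. Let $C_d\subset\mathbb{A}^2_{\F_q}$ be the affine curve cut out by a good integral model of $\Phi_d$. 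Because an $x$-coordinate determines its torsion point up to sign, and hence determines that point's order, distinct $C_d$'s meet in only finitely many points. Excluding the finitely many places of bad reduction and the place at infinity, one finds
\[
\sum_{\lambda_0} r(n,\lambda_0) \;=\; \sum_{d\mid n,\,d>1} |C_d(\F_q)| \;+\; O(1).
\]

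The decisive step is to show that each $C_d$ is \emph{geometrically} irreducible over $\F_q$, for all $n$ coprime to a finite exceptional set depending only on $E$. Because of the Weil pairing, the image $G_d^{\mathrm{geom}}$ of $\Gal(\overline{K^{\mathrm{sep}}}/K\cdot\overline{\F_q})$ acting on $E[d]$ lies inside $\mathrm{SL}_2(\mathbb{Z}/d)$, and the $\overline{\F_q}$-components of $C_d$ correspond to the orbits of $G_d^{\mathrm{geom}}$ on the set $\{\pm P : P\in E[d] \text{ of exact order }d\}$. Since $\mathrm{SL}_2(\mathbb{Z}/d)$ already acts transitively on that set, it suffices to know $G_d^{\mathrm{geom}}=\mathrm{SL}_2(\mathbb{Z}/d)$, which is exactly the conclusion of the function-field analogue of Serre's open image theorem applied to the non-isotrivial $E$: Igusa's theorem gives surjectivity on prime torsion away from a finite exceptional set of primes, this extends to prime powers by controlling the kernel of $\mathrm{SL}_2(\mathbb{Z}/\ell^k)\to\mathrm{SL}_2(\mathbb{Z}/\ell)$, and to composite moduli by a Goursat-type lemma. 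This step is where I expect the argument to require the most care, and the exceptional set here is the source of the ``all but finitely many $n$'' clause.

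Granted the geometric irreducibility, the Hasse-Weil bound applied to the smooth projective model $\widetilde{C}_d$ gives $|\widetilde{C}_d(\F_q)| = q+1+O_{E,n}(\sqrt{q})$, and stripping off points at infinity and singular points changes the count by a further $O(1)$. Summing over the $d(n)-1$ divisors $d\mid n$ with $d>1$ and absorbing the $O(1)$ error from the first paragraph yields
\[
\sum_{\lambda_0} r(n,\lambda_0) \;=\; (d(n)-1)q + O(\sqrt{q}),
\]
as required.
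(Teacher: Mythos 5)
Your proposal is correct, and it takes a genuinely different route from the paper. The paper reduces the count to a Frobenius statistic: it shows $r(n,\lambda_0)$ depends only on the conjugacy class of $\Frob_{\lambda_0}$ in $\Gal(K(E[n])/K)\subset\operatorname{GL}_2(\mathbb{Z}/n)$, then applies the Chebotarev density theorem for degree-one primes, computing the relevant conjugacy class sizes explicitly and organizing the result by an induction first on prime powers $\ell^k$ (with three subcases depending on $q\bmod\ell$, and further subcases on whether Frobenius is diagonalizable) and then on the number of distinct prime factors of $n$. Your proposal instead pushes the factorization $\Lambda_n=\prod_{d\mid n,\,d>1}\Phi_d$ to the front, identifies the geometric irreducibility of each affine curve $\{\Phi_d=0\}$ with transitivity of $\operatorname{SL}_2(\mathbb{Z}/d)$ on the primitive vectors in $(\mathbb{Z}/d)^2$ modulo $\pm 1$, and lets a single application of the Hasse--Weil bound to each of the $d(n)-1$ pieces produce the answer. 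Both arguments are powered by the same underlying input --- that the geometric Galois group of $K(E[n])/K$ contains $\operatorname{SL}_2(\mathbb{Z}/n)$ for all but finitely many $n$, which you correctly attribute to Igusa plus lifting to prime powers plus a Goursat-type argument for composite moduli, and which the paper develops in detail in Section~4.2 --- but your route short-circuits all of the conjugacy-class bookkeeping: transitivity of $\operatorname{SL}_2(\mathbb{Z}/d)$ on primitive vectors is strictly weaker than equality of groups, yet is exactly what is needed, and the count $(d(n)-1)q+O(\sqrt q)$ falls out directly as one $q$ per nontrivial divisor rather than by assembling prime-power contributions inductively. The disjointness of the fibers of the $\Phi_d$ (so no double-counting occurs) is automatic because the order of a torsion point is determined by its $\pm$-pair and hence by its $x$-coordinate, as you note. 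In this sense, Chebotarev for the function field and Hasse--Weil for the curve $\{\Phi_d=0\}$ are two faces of the same estimate, but your choice of the ``geometric'' face yields a noticeably cleaner proof.
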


We prove this theorem by analyzing the \textbf{\(n\)-torsion point field}, \(K(E[n])\), which is the field extension of \(K\) obtained by adjoining the \(x\)- and \(y\)-coordinates of all \(n\)-torsion points on \(E\). We demonstrate that the number of roots of \(\Lambda_n(x, \lambda_0)\) is governed by the conjugacy class of the Frobenius element at the place \(\lambda_0\). Finally, we apply the Chebotarev density theorem to complete the proof.

Theorem \ref{thm:main1} holds for an integer $n$ if $\operatorname{SL}_2(\mathbb{Z}/n\mathbb{Z})\subset \Gal(K(E[n]/K)$. We will be interested in 3 particular elliptic curves that correspond to different intersection types of pairs of smooth conics. For those $3$ particular elliptic curves, we will show that Theorem \ref{thm:main1} holds for all odd integers $n$ coprime to $q$

With the above theorem, we are able to obtain the density of smooth conic pairs satisfying the Poncelet $n$-gon condition. 
\begin{thm}\label{thm:n-gon} (Density of smooth conic pairs satisfying the Poncelet $n$-gon condition) \par
    Let $\mathbb{F}_q$ be a finite field with characteristic greater than $3$, and $n$ be an odd integer coprime to $q$. We have
    $$\frac{|\Gamma_n|}{|\Psi|} = \frac{d(n)-1}{q}+O(q^{-3/2}). $$
\end{thm}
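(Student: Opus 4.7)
The plan is to follow the stratification approach that Chipalkatti uses for the $n=3$ case: write $\Psi=\bigsqcup_I\Psi_I$ by the Galois-orbit pattern $I$ of the four intersection points of $\mathcal{A}$ and $\mathcal{B}$, put pairs in each stratum into a $\mathrm{PGL}_3(\mathbb{F}_q)$-normal form depending on two continuous parameters, and count the Poncelet pairs inside each normal form before multiplying by the $\mathrm{PGL}_3$-orbit size.

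Within a stratum I would parametrize pairs as $(A_\lambda,B_\lambda+tA_\lambda)$, where $\lambda\in\mathbb{P}^1(\mathbb{F}_q)$ specifies the isomorphism class of the Griffiths--Harris elliptic curve $E_\lambda:y^2=\det(xA_\lambda+B_\lambda)$ and $t\in\mathbb{F}_q$ is a pencil parameter.  The identity
\[
\det\bigl(xA_\lambda+(B_\lambda+tA_\lambda)\bigr)=\det\bigl((x+t)A_\lambda+B_\lambda\bigr)
\]
shows that the Griffiths--Harris marked point $(0,\sqrt{\det(B_\lambda+tA_\lambda)})$ on the elliptic curve of $(A_\lambda,B_\lambda+tA_\lambda)$ corresponds, under the canonical isomorphism of curves, to the point of $E_\lambda$ with $x$-coordinate $t$.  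By Theorem~\ref{Thm:GH}, the pair with parameters $(\lambda_0,t_0)\in\mathbb{A}^2(\mathbb{F}_q)$ therefore satisfies the Poncelet-$n$-gon condition if and only if $\Lambda_n(t_0,\lambda_0)=0$, where $\Lambda_n(x,\lambda)$ is the $n$-th division polynomial of $E_\lambda$ (using that for odd $n$ no non-trivial $n$-torsion point has $y=0$).  Multiplying by the $\mathrm{PGL}_3(\mathbb{F}_q)$-orbit size $|\mathrm{PGL}_3(\mathbb{F}_q)|/|\mathrm{Aut}(A_\lambda,B_\lambda)|\asymp q^{8}$ and summing over $(\lambda_0,t_0)$, Theorem~\ref{thm:main1} applied to the family $E/\mathbb{F}_q(\lambda)$ yields
\[
|\Gamma_n\cap\Psi_I|=\frac{(d(n)-1)q^{9}}{|\mathrm{Aut}_I|}+O(q^{17/2})
\]
for each stratum whose associated elliptic family is non-isotrivial.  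Since the same automorphism factor appears in $|\Psi_I|\asymp q^{10}/|\mathrm{Aut}_I|$, summing over all strata (the isotrivial and lower-dimensional ones contributing only to the error) and dividing by $|\Psi|\asymp q^{10}$ gives the claimed density.

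I expect the main obstacle to be the verification that each of the three Poncelet families $E/K=\mathbb{F}_q(\lambda)$ satisfies $\mathrm{SL}_2(\mathbb{Z}/n\mathbb{Z})\subseteq\Gal(K(E[n])/K)$ for every odd $n$ coprime to $q$; Theorem~\ref{thm:main1} delivers the sharp main term only under this hypothesis, and the introduction explicitly flags this verification as part of the paper's work.  Secondary technical points are the careful handling of the finitely many bad fibers $\lambda_0$ (singular reduction of $E_\lambda$ or non-transversal intersection), bounding the contribution of isotrivial or lower-dimensional strata within $O(q^{17/2})$, and checking that the stabilizers $|\mathrm{Aut}_I|$ in the dominant strata all coincide (with the Klein four-group, say) so that the leading coefficient of $|\Gamma_n|/|\Psi|$ assembles into exactly $d(n)-1$ rather than a weighted average over strata.
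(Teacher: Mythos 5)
Your approach is essentially the one the paper takes: stratify by intersection type, reduce to counting $\mathbb{F}_q$-roots of the division polynomial $\Lambda_n(x,\lambda_0)$ summed over $\lambda_0$, and invoke Theorem~\ref{thm:main1}. The paper packages your $(\lambda_0,t_0)$ parametrization as Lemma~\ref{lem:3.1} and Theorem~\ref{thm:3.2}, explicitly identifying the three elliptic families involved (the Legendre curve for types $(1,1,1,1)$ and $(2,2)$; $y^2=(x-\lambda)(x^2-b)$ for types $(2,1,1)$ and $(4)$; a third twisted family for type $(3,1)$), and you correctly flag the verification of $\Gal(K(E[n])/K(\mu_n))=\operatorname{SL}_2(\mathbb{Z}/n\mathbb{Z})$ for each family, for all odd $n$ coprime to $q$, as the main technical work remaining.

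One small correction to your final concern: you do not need the stabilizers $|\mathrm{Aut}_I|$ to coincide across the dominant strata, and in fact they do not (the pencil-count table shows factors $1/24,\;1/4,\;1/8,\;1/3,\;1/4$). What matters is only that the per-stratum density $|\Gamma_n\cap\Psi_I|/|\Psi_I|$ equals $\frac{d(n)-1}{q}+O(q^{-3/2})$ for each $I$ individually; the orbit-size factor $|\mathrm{PGL}_3(\mathbb{F}_q)|/|\mathrm{Aut}_I|$ appears in both $|\Gamma_n\cap\Psi_I|$ and $|\Psi_I|$ and cancels stratum by stratum, so summing over $I$ automatically reproduces the common ratio. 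The paper phrases this by observing that every individual pencil of every type gives density $\frac{d(n)-1}{q}+O(q^{-3/2})$, so the global average is that number with no further comparison of stabilizers needed.
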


% \subsection{Outline of this paper}
% We will first go over some key ingredients in Chipalkatti's paper, and prove our precise count for the Poncelet triangle case following the same method. Then we show our results for the $n=4$ case using the Hasse-Weil estimate, and explain why it is hard to go beyond by using this method. In the next section, we will give the connection between the density of smooth conics pairs having a Poncelet $n$-gon and the $n$-torsion points on families of elliptic curve with $x$-coordinate defined over $\mathbb{F}_q$, and prove our density results assuming Theorem 1.4. Lastly, we will give a proof for Theorem 1.4 by relating the counting to the number of primes with in a specific conjugacy class, and finish the proof by Chebotarev density Theorem. 

\section{Poncelet triangles and tetragons}
\subsection{Pencils of conics and Cayley's criterion}
We first introduce Cayley's criterion on the existence of a Poncelet triangle, which is used in the proof of Theorem \ref{Thm:Chi}.  Let $k$ be a field with characteristic greater than $2$. A projective plane conic $\mathcal{A}$ over $k$ is defined by the equation 
$$ax^2+bxy+cxz+dy^2+eyz+fz^2 = 0.$$
We can associate a symmetric matrix to this conic:
$$A = \begin{bmatrix}a & b/2 & c/2 \\ b/2 & d & e/2 \\ c/2 & e/2 & f \end{bmatrix}. $$
The conic $\mathcal{A}$ is smooth if and only if $\det A \neq 0$. 
Any smooth conic $\mathcal{A}$ is isomorphic over $\overline{k}$ to the conic $\mathcal{I}: x^2+y^2+z^2 = 0$. 

Let $(\mathcal{A}, \mathcal{B})$ be an ordered pair of smooth conics with transversal intersection, and $E: y^2 = \det(xA+B)$ be the elliptic curve from Theorem \ref{Thm:GH}. We consider the following formal Maclaurin expansion
$$\sqrt{\det(xA+B)} = H_0+H_1x+H_2x^2+\cdots.$$
\begin{thm}(Cayley's criterion)\label{Thm:Cayley} \par
A pair of smooth projective plane conics $(\mathcal{A},\mathcal{B})$ with transversal intersection satisfy the Poncelet triangle condition if and only if $H_2 = 0$.
\end{thm}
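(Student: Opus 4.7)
The plan is to derive Cayley's criterion from the Griffiths--Harris theorem (Theorem~\ref{Thm:GH}), which identifies the Poncelet triangle condition with the assertion that $P = (0, \sqrt{\det B})$ is a $3$-torsion point on $E: y^2 = \det(xA+B)$. It therefore suffices to show that $3P = O$ if and only if $H_2 = 0$.

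First, I would recall the standard characterization of $3$-torsion on a smooth plane cubic in Weierstrass-type form. Taking the identity to be the flex at infinity $O$, three points of $E$ sum to $O$ iff they are collinear. For an affine point $P$, the tangent line to $E$ at $P$ intersects $E$ in a third point $Q = -2P$, so $3P = O$ iff $Q = P$, iff the tangent at $P$ has intersection multiplicity $3$ with $E$ at $P$ --- i.e., iff $P$ is a flex. The smoothness and transversality hypotheses guarantee $\det A, \det B \neq 0$ and that $E$ is a smooth projective cubic, so $H_0 = \sqrt{\det B} \neq 0$ and $P$ is affine, putting us in the setup above.

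Next, I would read off the tangent at $P$ from the given Maclaurin expansion. Because $H_0 \neq 0$, the branch $y = H_0 + H_1 x + H_2 x^2 + \cdots$ is an analytic local parametrization of $E$ near $P$, so the tangent line at $P$ is $y = H_0 + H_1 x$. The $x$-coordinates of the three intersections of this tangent with $E$ are the roots of the cubic $g(x) := \det(xA+B) - (H_0 + H_1 x)^2$, and substituting the square of the Maclaurin series gives $g(x) = 2 H_0 H_2 \, x^2 + O(x^3)$. Since $H_0 \neq 0$ and $g$ has degree $3$ (because $\det A \neq 0$), the root $x=0$ has multiplicity exactly $2$ when $H_2 \neq 0$ and exactly $3$ when $H_2 = 0$. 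Combined with the flex characterization above, this proves Cayley's theorem.

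The conceptual core of the argument is the chain $H_2 = 0 \Leftrightarrow P \text{ is a flex} \Leftrightarrow 3P = O$; the first equivalence is a one-line Taylor computation, and the second is a standard consequence of the group law on a plane cubic with a flex identity. The only mild subtleties to verify are that $g$ really is a cubic (ensured by $\det A \neq 0$, which prevents spurious intersections migrating in from infinity) and that $P$ is distinct from the identity $O$ at infinity (immediate since $P$ is affine). I do not expect any serious obstacle.
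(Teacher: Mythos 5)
Your proof is correct. The paper does not supply a proof of Cayley's criterion; it simply cites Griffiths--Harris and notes in passing that the condition $H_2 = 0$ is equivalent to the point $P$ with $x(P) = 0$ being a $3$-torsion point of $E$. Your argument makes that remark precise in the standard way: reduce to the flex characterization of $3$-torsion, identify the tangent at $P$ from the Maclaurin series, and compute $g(x) = \det(xA+B) - (H_0+H_1x)^2 = 2H_0H_2\,x^2 + (\det A)\,x^3$, so that $P$ is a flex exactly when $H_2=0$. One tiny cosmetic point: since the base field here is $\mathbb{F}_q$ rather than $\mathbb{C}$, it is cleaner to call the series $H_0 + H_1 x + \cdots$ a \emph{formal} power-series branch rather than an analytic parametrization; nothing in the computation changes, as $g$ is an honest polynomial of degree $3$ with leading coefficient $\det A \neq 0$ and vanishing constant and linear terms. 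The hypotheses $\det A, \det B \neq 0$ are used exactly as you say: $\det B \neq 0$ gives $H_0 \neq 0$ (so $P$ is not $2$-torsion and the projection to the $x$-axis is unramified at $P$), and $\det A \neq 0$ keeps $g$ cubic.
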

For a proof, see \cite{GH}.  This is also the criterion for the point $P\in E$ with $x(P) = 0$ to be a $3$-torsion point of $E$. There is a generalization of the Cayley's criterion which determines when satisfy the Poncelet $n$-gon condition.  We define the \textbf{Hankel determinant}
$$f_{2m+1} = \det\begin{bmatrix}H_2 & H_3 &\cdots & H_{m+1} \\ H_3 & H_4 & \cdots & H_{m+2} \\ \vdots & \vdots &  & \vdots \\ H_{m+1} & H_{m+2} & \cdots & H_{2m}\end{bmatrix}\;\;\;f_{2m} =  \det\begin{bmatrix}H_3 & H_4 & \cdots & H_{m+1} \\ H_4 & H_5 & \cdots & H_{m+2} \\ \vdots & \vdots & & \vdots \\ H_{m+1} & H_{m+2} & \cdots & H_{2m}\end{bmatrix}.$$
Then the pair $(\mathcal{A}, \mathcal{B})$ satisfy the Poncelet $n$-gon if and only $f_n = 0$ \cite{GH}. In particular, we see that $f_3 = H_2$ and $f_4 = H_3$. 

The other ingredient in the proof of Theorem \ref{Thm:Chi} is the classification of pairs of smooth conics up to projective automorphism. We introduce the notion of a \textbf{pencil of conics} in order to state the classification.   Let $\mathcal{A}, \mathcal{B}$ be distinct projective plane conics, and $F(x,y,z), G(x,y,z)$ be their defining equations. The one-parameter family of conics $\{\eta F+G = 0: \eta\in\mathbb{P}^1(k)\}$ is called the \textbf{pencil of conics} generated by $\mathcal{A}, \mathcal{B}$, where $\eta = \infty$ gives the conic $\{F = 0\}$. We see that any pair of distinct conics in this pencil will share the same set of intersection points. If the generators $\mathcal{A}, \mathcal{B}$ intersect transversally at $4$ points in $\overline{k}$, then these $4$ points will determine the pencil uniquely, in the sense that the pencil consists of all conics passing through those $4$ distinct points.  In \cite[Section 3]{Chi}, Chipalkatti describes Dickson's classification of pencils of conics, which shows that there are only $5$ projective equivalence classes of pencils of smooth conics with transversal intersection. If two conics defined over $\mathbb{F}_q$ has transversal intersection, then their intersection points are permuted by the Frobenius automorphism, and the projective equivalence class of the pencil is determined by the cycle structure of this permutation, which gives a partition of the integer $4$. We refer to the projective equivalence class of the pencil by this cycle structure, so for example we write $(1,1,1,1)$ to denote the class of the pencil where the two generators intersect at $4$ $\mathbb{F}_q$-points, and write $(2,1,1)$ to denote the class where the generators intersect at a a pair of conjugate $\mathbb{F}_{q^2}$-points, and two other $\mathbb{F}_q$-points.

The following table lists each of the projective equivalence classes of pencils and also the number of pencils in each class.  The entries of this table can be determined by counting sets of four points in $\mathbb{P}^2(\overline{\F}_q)$ with no three points in a line that are fixed by the Frobenius automorphism.  We omit the proof since this kind of count is standard.  For a more complicated analogue of this results for collections of five points, and collections of six points, see Table 1 and Table 2 in \cite{DasOConnor}. 
\begin{center}
    \begin{tabular}{|c|c|}
    \hline
    \textbf{Pencil Intersection Type} & \textbf{Number of Pencils of This Type} \\ \hline
    $(1,1,1,1)$ &  $\frac{1}{24}(q^8 - q^6 - q^5 + q^3)$ \\ \hline 
    $(2,1,1)$ & $\frac{1}{4}(q^8 - q^6 - q^5 + q^3)$ \\ \hline
    $(2,2)$ & $\frac{1}{8}(q^8 - q^6 - q^5 + q^3)$ \\ \hline
    $(3,1)$ & $\frac{1}{3}(q^8 - q^6 - q^5 + q^3)$ \\ \hline
    $(4)$ & $\frac{1}{4}(q^8 - q^6 - q^5 + q^3)$\\
    \hline
\end{tabular}
\end{center}

There are also other equivalence classes of pencils of conics where the common intersection may not be $4$ distinct points, and there are extra classes when $k$ has characteristic $2$. For a complete classification of the classes of pencil of conics, we refer the reader to Table 7.7 of Hirschfeld's \emph{Projective Geometries over Finite Fields} \cite{Hirschfeld}.

\subsection{Number of pairs of smooth conics satisfying the Poncelet triangle condition}\label{triangle}
We follow Chipalkatti's strategy for the proof of Theorem \ref{Thm:Chi}.  For each of the equivalence classes of pencils of conics described in the preivous section, we choose an explicit pair of conics generating the pencil and use the equations defining these conics to compute $H_2$ from Theorem \ref{Thm:Cayley}.  We count the number of $\F_q$-points of the resulting curve $\{H_2 = 0\}$.  The computation for the pencil class with intersection type $(1,1,1,1)$ is done in \cite{Chi}. We reproduce it here, since we will use the same strategy for the other classes. 

Let $\mathbb{F}_q$ be a finite field with characteristic greater $3$, and $\{F,G\}$ be a generator of a pencil of conics. We write $C_r = rF+G$ with $r\in\mathbb{P}^1(\mathbb{F}_q)$ to denote a conic in the pencil, and $H_2(r,s)$ for the $H_2$ associated to the pair of conics $(C_r, C_s)$. We will see that $H_2(r,s)$ is a quadratic function in $r$, and compute its discriminant $(\disc H_2)(s)$, which is a function of $s$. Then, we count the number of $s_0\in\mathbb{F}_q$ such that $H_2(r,s_0)$ has $0,1,2$ $\mathbb{F}_q$-roots respectively. 

We also observe that if $A$ is a matrix representation of a smooth conic $\mathcal{A}$, then for the smooth conic pair $(\mathcal{A}, \mathcal{A})$, we can compute $H_2 = 3\det A \neq 0$. Therefore, if we get $H_2(r,s) = 0$ in a pencil of conics, it is a guarantee that $C_r, C_s$ are distinct conics and they intersect transversally.

We will also see that the number of pairs of smooth conics satisfying the Poncelet triangle condition in a pencil of type $(1,1,1,1)$  is the same as the number of such conics in a pencil of type $(2,2)$.  In Section \ref{Sec:ngon} we will relate this observation to the $3$-torsion points of certain elliptic curves.  The main point is that there are $3$ singular conics defined over $\mathbb{F}_q$ in each of the pencils. We note that a corresponding statement about counting pairs of conics satisfying the Poncelet triangle condition holds for pencils of type $(4)$ and pencils of type $(2,1,1)$.  Each of these pencils contains exactly $1$ singular conic defined over $\mathbb{F}_q$.

We also note that there will be different behaviors when $q\equiv 1 \pmod 3$ and when $q\equiv -1 \pmod 3$. We will see the similar behavior again when counting pairs of smooth conics satisfying the Poncelet $n$-gon condition.

\subsubsection{Intersection type (1,1,1,1)}

\begin{itemize}
    \item Generators of the pencil: $F = xy$, $G = z^2+yz+xz$.
    \item Singular members: $\eta_1 = 0$, $\eta_2 = 1$, $\eta_3 = \infty$. 
    \item After clearing denominators, $H_2(r, s) = r^2+(6s^2-4s^3-4s)r+s^4$.
    \item $(\disc H_2)(s) = 16(s^2-s+1)(s(s-1))^2$. 
    \item Density of pairs of smooth conics satisfying the Poncelet triangle condition: $\frac{q-5}{(q-2)(q-3)}$.
 \end{itemize}
  
We now justify the count for pairs of smooth conics in this pencil that satisfy the Poncelet triangle condition.  The claim about the singular members of this pencil is clear.  There are $q+1$ choices for $r$, and only $q-2$ of them define smooth conics. Hence, the number of pairs of smooth conics with transversal intersection in this pencil is $(q-2)(q-3)$.

We first remove solutions that correspond to a pair of conics in which at least one member is singular.  When $s=0,1$, we get $(\disc H_2)(s) = 0$, and there is one value of $r$ corresponding to each of these two values of $s$, which are $r=0,1$ respectively. We also see that when $r=0,1$, there is exactly one value of $s$ such that $H_2(r,s) = 0$, which are $s=0,1$ respectively. 

    Now, we assume that $s\neq 0,1$. We know that if $H_2(r,s) = 0$, we will also have $r\neq 0,1$. Hence, if $s\neq 0,1$ and $H_2(r,s) = 0$, we know that both $C_r$ and $C_s$ are smooth. 
    
    Since $s \neq 0,1$, the square factor $(s(s-1))^2$ of $(\disc H_2)(s)$ is always a non-zero square. We follow the notation in \cite{Chi}, we denote the non-square part of $(\disc H_2)(s)$ as $f(s) = s^2-s+1 = (s-1/2)^2+3/4$. Therefore we can determine the number of roots of $(\disc H_2)(s)$ by determining whether $f(s)$ is zero, a non-zero square, or a non-zero non-square.
    
    If $f(s) = y^2$ for some $y\in\mathbb{F}_q$, then we have 
    $$(y-s+1/2)(y+s-1/2) = \frac{3}{4}.$$
    Let $a = y -s + 1/2$, which implies $y+s-1/2 = \frac{3}{4a}$. Solving for $s$ and $y$ in terms of $a$, we get 
    $$s = \frac{3+4a-4a^2}{8a}, \ \ \ y = \frac{4a^2+3}{8a}.$$
    This allows us to parameterize the values of $s$ such that $f(s)$ is a square in $\mathbb{F}_q$. 

    We see that $a$ and $-3/(4a)$ give the same value for $s$ in the above formula. Hence, each $s$ value with $f(s)$ being a square has two distinct $a$ corresponding to it except when $a = -3/(4a)$. In this case, $a = \pm \frac{\sqrt{-3}}{2},\ s = \frac{1}{2}\pm \frac{\sqrt{-3}}{2}$, and $f(s) = 0$. We separate the discussion into two cases based on whether $-3$ is a square in $\mathbb{F}_q$. 

\textbf{Case 1:} If $q\equiv -1 \pmod 3$, then $-3$ is not a square in $\mathbb{F}_q$. We know that $f(s)$ has no roots defined over $\F_q$, and each $s$ such that $f(s)$ is a non-zero square has two values of $a$ corresponding to it. Hence, the $q-1$ choices for $a$ give $\frac{q-1}{2}$ choices for $s$ such that $f(s)$ is a non-zero square. After excluding $s = 0, 1$ where $f(s)$ is also a square, we have $\frac{q-5}{2}$ values of $s$ such that $(\disc H_2)(s)$ is a non-zero square. There are $2$ values of $r$ corresponding to each of these values of $s$. In total, we get $q-5$ pairs of smooth conics $(C_r, C_s)$ in this pencil satisfying the Poncelet triangle condition. 

\textbf{Case 2:} If $q\equiv 1 \pmod 3$, then $-3$ is a square in $\mathbb{F}_q$. In this case $f(s)$ has two roots defined over $\F_q$.  The values of $a$ corresponding to these roots are $a = \pm\sqrt{-3}/2$. Hence the $q-3$ choices for $a$ will give $\frac{q-3}{2}$ values of $s$ such that $f(s)$ is a non-zero square. By excluding $s = 0,1$ as we did above, there are $2$ values $s$ such that $(\disc H_2)(s) = 0$, and $\frac{q-7}{2}$ values of $s$ such that $(\disc H_2)(s)$ is a non-zero square. Hence, there are in total $2 + 2\cdot \frac{q-7}{2} = q-5$ pairs of smooth conics $(C_r, C_s)$ in this pencil satisfying the Poncelet triangle condition.

\subsubsection{Intersection type (2,1,1)}
\begin{itemize}
    \item Generators of the pencil: $F = xy$, $G = y^2+yz+xz+ez^2$ where $T^2+T+e$ irreducible over $\mathbb{F}_q$.
    \item Singular members: $\eta = \infty$.
    \item After clearing denominators, $H_2(r,s) = r^2(4e-1)+r(4s^3e^2-6s^2e-4se+4s-2)+(-s^4e^2+6s^2e-4s+3)$.
    \item $(\disc H_2)(s) = 16(s^2e^2 - se - 3e + 1)(s^2e - s + 1)^2$.
    \item Density of pairs of smooth conics satisfying the Poncelet triangle condition: $\frac{q-1}{q(q-1)} = \frac{1}{q}$.
\end{itemize}

We justify this last formula following the same strategy as in the previous case.  There are $q$ choices for $r$ so that $C_r$ is a smooth conic, and hence, there are in total $q(q-1)$ pairs of smooth conics in the pencil. 
    
    Since $T^2+T+e$ is irreducible over $\mathbb{F}_q$, we know that $1-4e$ is not a square in $\mathbb{F}_q$. In particular, it does not equal  zero, and hence, $H_2(r,s)$ is a quadratic polynomial in $r$.
        
    Consider the square factor $(s^2e - s + 1)^2$ of $(\disc H_2)(s)$.  Note that $s^2e-s+1$ has no roots over $\mathbb{F}_q$ since its discriminant is $1-4e$, which cannot be a square by assumption. Therefore, we can determine the number of roots of $(\disc H_2)(s)$ by determining whether $s^2e^2-se-3e+1$ is zero, a non-zero square, or a non-zero non-square.  This is equivalent to determining whether $s^2-se^{-1}-3e^{-1}+e^{-2}$ is zero, a non-zero square, or a non-zero non-square.
            
    We follow the same strategy as in previous case. We write $s^2-se^{-1}-3e^{-1}+e^{-2} = (s-b)^2+c$ where $b = \frac{e^{-1}}{2}$ and $c = \frac{3(1-4e)}{4e^2}\neq 0$. If $(s-b)^2+c = y^2$ for some $y\in \mathbb{F}_q$, then we have $(y-s+b)(y+s-b) = c$.  Let $a = y-s+b$, which is non-zero since $c\neq 0$.  This implies $y+s-b = \frac{c}{a}$.  By solving for $s$ in terms of $a$, we get 
        $$s = \frac{c+2ba-a^2}{2a}\;\;\;\;\;\;\;a\in\mathbb{F}_q\setminus \{0\}$$
        as a parametrization of $s$ in terms of $a$.  We note that $a, -c/a$ will give the same value for $s$. 

\textbf{Case 1:} If $q\equiv -1 \pmod 3$, then $-3$ is not a square in $\mathbb{F}_q$, and so $-c = \frac{-3(1-4e)}{4e^2}$ is a square in $\mathbb{F}_q$. Then $(s-b)^2+c$ has two roots over $\mathbb{F}_q$ which are $s = b\pm(\sqrt{-c})$.  These roots occur when $a = -c/a = \pm\sqrt{-c}$. For each of the other $q-3$ choices of $a$, there is another value of $a$ giving the same value of $s$, and each of these $s$ are such that $(s-b)^2+c$ is a non-zero square. In summary, we have $2$ values of $s$ such that $(\disc H_2)(s) = 0$, which means that each of them has $1$ value of $r$ such that $H_2(r,s) = 0$. There are also $\frac{q-3}{2}$ values of $s$ such that $(\disc H_2)(s)$ is a non-zero square, which means that for each of these values there are $2$ values of $r$ such that $H_2(r,s) = 0$. Then, we have $2+\frac{q-3}{2}\cdot 2 = q-1$ pairs of smooth conics in this pencil satisfying the Poncelet triangle condition. 
        
\textbf{Case 2:} If $q\equiv 1 \pmod 3$, then $-c$ is not a square in $\mathbb{F}_q$. We know that $(s-b)^2+c$ has no roots over $\mathbb{F}_q$, and there are $\frac{q-1}{2}$ values of $s$ such that it is a non-zero square. In total, there are $\frac{q-1}{2}\cdot 2 = q-1$ pairs of smooth conics in this pencil satisfying the Poncelet triangle condition.  

\subsubsection{Intersection type (2,2)}
\begin{itemize}
    \item Generators: $F = xy$, $G = e_1x^2+e_2y^2+xz+yz+z^2$ where $T^2+T+e_1$, $T^2+T+e_2$ are irreducible over $\mathbb{F}_q$.
    \item Singular members: $\eta_3 = \infty$, $\eta_1 = \frac{1+\sqrt{(1-4e_1)(1-4e_2)}}{2}$, $\eta_2 = \frac{1-\sqrt{(1-4e_1)(1-4e_2)}}{2}$.
    \item After clearing denominators, $H_2(r,s) =r^2(-16e_1e_2+4e_1+4e_2-1)+r(16se_1e_2+4s^3-6s^2-4se_1-4se_2+8e_1e_2+4s-2e_1-2e_2)+(-s^4-24s^2e_1e_2+48e_1^2e_2^2+6s^2e_1+6s^2e_2+16se_1e_2-24e_1^2e_2-24e_1e_2^2-4se_1-4se_2+3e_1^2+3e_2^2+6e_1e_2)$.
    \item $(\disc H_2)(s) = 16(s^2 + 12e_1e_2 - s - 3e_1 - 3e_2 + 1)(-s^2 + 4e_1e_2 + s - e_1 - e_2)^2$.
    \item Density of pairs of smooth conics satisfying the Poncelet triangle condition: $\frac{q-5}{(q-2)(q-3)}$.
\end{itemize}

% \nathan{This is where I stopped typing in changes.}

We now justify this last formula. Since $T^2+T+e_1$ and $T^2+T+e_2$ are irreducible over $\mathbb{F}_q$, we know that $1-4e_1$ and $1-4e_2$ are not squares in $\mathbb{F}_q$. Therefore, we know that $(1-4e_1)(1-4e_2)$ is a square in $\mathbb{F}_q$, and $\eta_1, \eta_2\in\mathbb{F}_q$. Then, there are $q-2$ choices of $r$ so that $C_r$ is a smooth conic, and $(q-2)(q-3)$ pairs of smooth conics in this pencil.

Since $1-4e_1, 1-4e_2$ are not squares over $\mathbb{F}_q$, neither of them can be zero. We see that the coefficient for $r^2$ in $H_2(r,s)$ is $(-16e_1e_2+4e_1+4e_2-1) = -(4e_1-1)(4e_2-1)\neq 0$. Hence, $H_2(r,s)$ is a quadratic polynomial in $r$. 
    
We consider the square factor $(-s^2 + 4e_1e_2 + s - e_1 - e_2)$ in $(\disc H_2)(s)$. It has two roots $\eta_1, \eta_2$. Hence, for $s\neq \eta_1, \eta_2$, we know that $\disc H_2$ is zero or a non-zero square square if and only if $s^2 + 12e_1e_2 - s - 3e_1 - 3e_2 + 1$ is zero or a non-zero square. 

We also need to remove solutions to $H_2(r,s) = 0$ where at least one of the conic is singular. When $s = \eta_1, \eta_2$, we compute that $\disc H_2 = 0$, and there is one value for $r$ corresponding to each of these two values of $s$. We also see that $H_2(\eta_1,\eta_1) = 0$, $H_2(\eta_2, \eta_2) = 0$, $H_2(\eta_1, s) = (s-\eta_1)^4$ and $H_2(\eta_2, s) = (s-\eta_2)^4$. 

Therefore, we know that if $H_2(r,s) = 0$ and $s\neq \eta_1, \eta_2$, then both of the conics $C_r, C_s$ are smooth. We also know that $(\disc H_2)(s)$ is zero or a non-zero square if and only if $s^2 + 12e_1e_2 - s - 3e_1 - 3e_2 + 1$ is a zero or a non-zero square.
        
We follow the same parameterization trick as previously. We write $s^2 + 12e_1e_2 - s - 3e_1 - 3e_2 + 1 = (s-b)^2+c$ where $b = \frac{1}{2}$ and $c = \frac{3}{4}(4e_1-1)(4e_2-1)\neq 0$. We solve that $(s-b)^2+c$ is a square if 
$$s = \frac{c+2ba-a^2}{2a}\;\;\;\;\;\;\;a\in\mathbb{F}_q\setminus \{0\}.$$

We note that $a, -c/a$ will give same value for $s$. If $a = -c/a$, then $a = \pm\sqrt{-c}$. In this case, we would have $s = b\pm\sqrt{-c}$. Those are the only two values of $s$ such that $(s-b)^2+c$ is a square (zero), and has only one pre-image in $a$. All other values of $s$ such that $(s-b)^2+c$ is a square will have exactly two pre-images in $a$. 

We also notice that when $s = \eta_1, \eta_2$, we would have $(s-b)^2+c = (1-4e_1)(1-4e_2)$ is a non-zero square in $\mathbb{F}_q$. Therefore, when counting the number of $s$ such that $(s-b)^2+c$ is a non-zero square, we need to remove those two values of $s$ corresponding to singular conics.
        
\textbf{Case 1:} If $q\equiv -1 \pmod 3$, then $-3$ is not a square in $\mathbb{F}_q$, and so $-c = \frac{-3}{4}(4e_1-1)(4e_2-1)$ is also not a square in $\mathbb{F}_q$. We know that $(s-b)^2+c$ has no roots in $\mathbb{F}_q$, and the $q-1$ choices of $a$ will yield $\frac{q-1}{2}$ choices of $s$ such that $(s-b)^2+c$ is a non-zero square. After removing $s = \eta_1, \eta_2$, we have $\frac{q-5}{2}$ choices of $s$ such that $\text{disc}H_2$ is a non-zero square. In total, there are $\frac{q-5}{2}\cdot 2 = q-5$ pairs of smooth conics in this pencil satisfying the Poncelet triangle condition,

\textbf{Case 2:} If $q\equiv 1\pmod 3$, then $-c$ is a square in $\mathbb{F}_q$. Following the same argument we applied in the previous pencil, we know that there are $\frac{q-3}{2}$ values of $s$ such that $(s-b)^2+c$ is a non-zero square, and $2$ values of $s$ such that it is zero. After removing $s = \eta_1, \eta_2$ corresponding to the singular conics, we have in total $2\cdot (\frac{q-3}{2} - 2) + 2 = q-5$ pairs of smooth conics in this pencil satisfying the Poncelet triangle condition.

\subsubsection{Intersection type (4)}
\begin{itemize}
    \item Generators: $F = x^2-ay^2$, $G = z^2-by^2+2cxy$ where $\sqrt{a}\notin\mathbb{F}_q$, $\sqrt{b^2-4ac^2}\notin\mathbb{F}_q$. 
    \item Singular members: $\eta = \infty$
    \item After clearing denominators, $H_2(r,s) =r^2(4ac^2-b^2)+r(4s^3a^2+6s^2ab-4sac^2+4sb^2+2bc^2)+(-s^4a^2+6s^2ac^2+4sbc^2+3c^4)$.
    \item $(\disc H_2)(s) = 16(s^2a^2 + sab - 3ac^2 + b^2)(s^2a + sb + c^2)^2$
    \item Density of smooth pairs satisfying the Poncelet triangle condition: $\frac{q-1}{q(q-1)} = \frac{1}{q}$.
\end{itemize}

    The proof in this pencil is exactly the same as the proof in the pencil of intersection type $(2,1,1)$. We sketch the proof instead of writing all the details.
    \begin{enumerate}
        \item $H_2$ is always a degree $2$ polynomial in $r$ by assumption.
        \item The square term of $(\disc H_2)(s)$ has no roots over $\mathbb{F}_q$, and hence $\disc H_2$ is a zero or a non-zero square if and only if $s^2a^2 + sab - 3ac^2 + b^2$ is a zero or a non-zero square. 
        \item There is only one singular conic $\eta = \infty$ in this pencil. We follow the same parametrization technique that we applied previously. Then, we can show that there are $q-1$ pairs of smooth conics in this pencil that satisfy the Poncelet triangle condition. 
    \end{enumerate}
\subsubsection{Intersection type (3,1)}
\begin{itemize}
    \item Generators: $F = y^2-xz$, $G = x^2+by^2+cxy+yz$ where $T^3+bT^2+cT+1$ is irreducible over $\mathbb{F}_q$. 
    \item Singular members: None
    \item After clearing denominators, $H_2(r,s) =r^2(3s^4+4s^3b+6s^2c-c^2+12s+4b)+r(2s^4b+4s^3b^2-4s^3c+6s^2bc+4sc^2-18s^2-4sb+2c)+(-s^4b^2+4s^4c+12s^3+6s^2b+4sc+3)$.
    \item $H_2(r, \infty) = 3r^2 + 2rb - b^2 + 4c$ with discriminant $16(b^2-3c)$.
    \item $(\disc H_2)(s) = 16(s^2(b^2 - 3c) + s(bc-9) + c^2 - 3b)(s^3 + s^2b + sc + 1)^2$.
    \item Density of pairs of smooth conics satisfying the Poncelet triangle condition: $\frac{q+1}{(q+1)q} = \frac{1}{q}$.
\end{itemize}

The main difference from the earlier proofs lies in the absence of singular conics defined over \( \mathbb{F}_q \) in this pencil. Therefore, we must account for the case \( r, s = \infty \) when solving \( H_2(r,s) = 0 \). Another difference is that \( H_2(r,s) \) may have degree less than 2 in the variable \( r \). To handle this, we treat the case \( s = \infty \) separately. For \( s \neq \infty \), we homogenize \( H_2(r,s) \) in the variable \( r \), allowing us to count its roots in \( \mathbb{P}^1(\mathbb{F}_q) \) using the discriminant. 

We observe that the square factor in \( (\disc H_2)(s) \) is \( s^3 + s^2b + sc + 1 \), which has no roots in \( \mathbb{F}_q \) by assumption. Consequently, \( (\disc H_2)(s) \) is zero or a non-zero square if and only if \( s^2(b^2 - 3c) + s(bc - 9) + c^2 - 3b \) is zero or a non-zero square.

The discriminant of \( s^2(b^2 - 3c) + s(bc - 9) + c^2 - 3b \) is given by $\Delta = 3b^2c^2 - 54bc + 12c^3 + 12b^3 + 81$, which equals \(-3\) times the discriminant of \( T^3 + bT^2 + cT + 1 \). Since \( T^3 + bT^2 + cT + 1 \) is irreducible over \( \mathbb{F}_q \) by assumption, we know that \( \Delta \neq 0 \). Additionally, \( b^2 - 3c \) and \( bc - 9 \) cannot both be zero, as this would make $\Delta = 0$. Hence, \( (\disc H_2)(s) \) is either a quadratic or linear polynomial in \( s \), but never constant.

We now analyze three cases:

\textbf{Case 1}: If \( b^2 - 3c = 0 \), then \( (\disc H_2)(s) \) is linear. There are \( \frac{q-1}{2} \) values of \( s \) such that \( (\disc H_2)(s) \) is a non-zero square, and one value where \( (\disc H_2)(s) = 0 \). Thus, we obtain \( 2 \cdot \frac{q-1}{2} + 1 = q \) pairs \((r,s)\) such that $H_2(r,s) = 0$. For \( s = \infty \), the discriminant of \( H_2(r, \infty) \) is zero, so there is exactly one \( r_0 \in \mathbb{F}_q \) where \( H_2(r_0, \infty) = 0 \). In total, this gives \( q + 1 \) smooth conic pairs in this pencil.

\textbf{Case 2}: If \( b^2 - 3c \) is a non-zero square in \( \mathbb{F}_q \), then the discriminant of \( H_2(r, \infty) \) is a non-zero square, so there are two roots \( r \in \mathbb{F}_q \) satisfying \( H_2(r, \infty) = 0 \). For \( s \neq \infty \), using the same counting method as before, we find $\frac{q-1}{2}$ values of $s_0\in\mathbb{F}_q$ such that $(\disc H_2)(s)$ is a non-zero square. In total, we find $2 + 2\cdot\frac{q-1}{2} = q+1$ pairs of smooth conics in this pencil satisfying the Poncelet triangle condition. 

\textbf{Case 3:} If $b^2-3c$ is not a square in $\mathbb{F}_q$, then \( H_2(r, \infty) \) has no roots in \( \mathbb{F}_q \). For \( s \neq \infty \), we divide \( (\disc H_2)(s) \) by \( b^2 - 3c \), observing that \( (\disc H_2)(s) \) is zero or non-zero square if and only if \( (\disc H_2)'(s) = s^2 + s\frac{bc - 9}{b^2 - 3c} + \frac{c^2 - 3b}{b^2 - 3c} \) is zero or non-square. We follow the parametrization technique as previous. 

If \( (\disc H_2)'(s) \) has roots in \( \mathbb{F}_q \), there are $2$ values of \( s \) where \( (\disc H_2)'(s) = 0 \), and $\frac{q-3}{2}$ values of $s$ where $(\disc H_2)'(s)$ is a non-zero square. Therefore, there are $q-(\frac{q-3}{2}+2) = \frac{q-1}{2}$ values of $s$ such that $(\disc H_2)'(s)$ is a non-square. In total, this results in \( 2 + 2 \cdot \frac{q-1}{2} = q + 1 \) pairs of smooth conics in this pencil that satisfy the Poncelet triangle condition. 

If \( (\disc H_2)'(s) \) has no roots in $\mathbb{F}_q$, there are $\frac{q-1}{2}$ values of $s$ such that $(\disc H_2)'(s)$ is a non-zero square. Therefore, there are $(q - \frac{q-1}{2}) = \frac{q+1}{2} $ values of \( s \) where \( (\disc H_2)'(s) \) is a non-square. In total, we find $2\cdot \frac{q+1}{2} = q+1$ pairs of smooth conics in this pencil satisfying the Poncelet triangle condition, 

\subsection{Asymptotic density of pairs of smooth conics satisfying the Poncelet tetragon condition}\label{tetragon}

In [\cite{Chi}, Section 4], Chipalkatti conjectured that the asymptotic density of pairs of smooth conics satisfying the Poncelet tetragon condition is $3/q$ as $q\rightarrow\infty$, based on computational data. However, we observe that the computational data only applies to conic pairs in the pencil of intersection type $(1,1,1,1)$. In this section, we demonstrate that the true asymptotic density for the tetragon case is $1/q$.  

\begin{thm}
    Let \(\mathbb{F}_q\) be a finite field of characteristic greater than \(3\). Using the same notation as in Theorem \ref{Thm:Chi}, we have  
    \[
    \frac{|\Gamma_4|}{|\Psi|} = \frac{1}{q} + O(q^{-3/2}).
    \]  
    Furthermore, the asymptotic density corresponding to each pencil type is given in the following table:  
    \begin{center}
        \begin{tabular}{|c|c|c|}
            \hline
            \textbf{Pencil Intersection Type} & \textbf{Density of Pencils} & \textbf{Density of Conic Pairs Satisfying} \\
            \ & \ & \textbf{the Poncelet Tetragon Condition} \\ \hline
            \((1,1,1,1)\) & \(1/24\) & \(3/q + O(q^{-3/2})\) \\ \hline
            \((2,1,1)\) & \(1/4\) & \(1/q + O(q^{-3/2})\) \\ \hline
            \((2,2)\) & \(1/8\) & \(3/q + O(q^{-3/2})\) \\ \hline 
            \((3,1)\) & \(1/3\) & \(0\) \\ \hline 
            \((4)\) & \(1/4\) & \(1/q + O(q^{-3/2})\) \\ \hline
        \end{tabular}
    \end{center}
\end{thm}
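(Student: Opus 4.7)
The plan is to adapt the pencil-by-pencil strategy of Section \ref{triangle} from Cayley's triangle criterion $H_2=0$ to the tetragon criterion $H_3=0$ (since $f_4 = H_3$, as noted just after Theorem \ref{Thm:Cayley}). For each of the five explicit generator pairs $\{F,G\}$ already written down in Section \ref{triangle}, I would compute the bivariate polynomial $H_3(r,s)$ arising from the pair of conics $(C_r, C_s)$, and then count the $\mathbb{F}_q$-points of the affine plane curve $Z_4:H_3(r,s)=0$ after removing the $O(1)$ pairs for which $C_r$ or $C_s$ is singular.

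Unlike the triangle case, where $H_2$ was quadratic in $r$ and the count reduced to a quadratic-character sum over the discriminant, $H_3$ has higher degree in $r$ and no elementary discriminant trick applies directly. Instead, I would factor $H_3$ into geometrically irreducible components over $\overline{\mathbb{F}_q}$ and apply the Hasse-Weil bound. Because $H_3$ has total degree bounded independently of $q$, each geometrically integral component has bounded genus, so each $\mathbb{F}_q$-rational component contributes $q+O(\sqrt{q})$ rational points, while Galois orbits of size $\geq 2$ of conjugate components contribute only $O(\sqrt{q})$. The per-pencil leading coefficients $3,1,3,0,1$ in the table should then correspond exactly to the number of $\mathbb{F}_q$-rational geometrically integral components of $Z_4$ in each case.

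The conceptual reason for this count is that the three non-trivial components of $Z_4$ should parametrize the three non-trivial cosets of $E[2]$ in $E[4]$ on the elliptic curve $E:y^2=\det(xA+B)$ from Theorem \ref{Thm:GH}, and the Galois action on these cosets factors through the induced mod-$2$ action, which coincides with the Galois action on $E[2]\setminus\{0\}$ and hence on the three singular conics of the pencil. So the number of $\mathbb{F}_q$-rational components equals the number of $\mathbb{F}_q$-rational singular conics: $3$ for $(1,1,1,1)$ and $(2,2)$, $1$ for $(2,1,1)$ and $(4)$, and $0$ for $(3,1)$. Combining the per-pencil leading orders with the pencil-class densities from the table in Section \ref{triangle} gives
\begin{equation*}
\frac{|\Gamma_4|}{|\Psi|}=\frac{1}{24}\cdot\frac{3}{q}+\frac{1}{4}\cdot\frac{1}{q}+\frac{1}{8}\cdot\frac{3}{q}+\frac{1}{3}\cdot 0+\frac{1}{4}\cdot\frac{1}{q}+O(q^{-3/2})=\frac{1}{q}+O(q^{-3/2}).
\end{equation*}

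The main obstacle is carrying out the explicit factorization of $H_3(r,s)$ in each pencil and rigorously matching its geometric components' Galois orbits to those on the singular conics. This is most delicate for the $(3,1)$ pencil, where one must show that the three geometric components form a single orbit under the cubic Galois action produced by the irreducibility of $T^3+bT^2+cT+1$, so that $Z_4$ has no $\mathbb{F}_q$-rational component and density exactly $0+O(q^{-3/2})$. A secondary technical issue is ruling out accidental low-degree components of $H_3$ (for instance linear factors coming from the $2$-torsion locus) that would inflate the leading constant; this reduces to a concrete irreducibility check on each pencil's explicit polynomial, manageable by hand or by computer algebra but not illuminating.
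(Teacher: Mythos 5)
Your approach coincides with the paper's: both reduce the tetragon condition to the curve $H_3(r,s)=0$ (since $f_4 = H_3$), factor $H_3$ into geometrically irreducible components in each of the five explicit pencils, and apply the Hasse--Weil bound so that the leading constant equals the number of $\mathbb{F}_q$-rational components. The paper carries out the factorizations you defer to (for instance $H_3 = (-2rs+s^2+r)(s^2-r)(s^2+r-2s)$ in the $(1,1,1,1)$ pencil, a product of three conjugate $\mathbb{F}_{q^3}$-cubics in the $(3,1)$ pencil, etc.) and observes directly that each pencil yields exactly three geometric components whose field of definition matches the field of definition of the singular conics. Your coset heuristic --- that the three components track the three nontrivial cosets of $E[2]$ in $E[4]$, on which Galois acts through the mod-$2$ reduction and hence exactly as on the three singular members of the pencil --- is a correct and illuminating gloss that the paper only makes precise later, in Section \ref{Sec:ngon} via Lemma \ref{lem:3.1} and Theorem \ref{thm:3.2}; but for the proof of this particular theorem the paper simply verifies the factorizations by hand, which is also the step you identify as the remaining obstacle.
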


\begin{proof}
By Cayley's criterion (Theorem \ref{Thm:Cayley}), a pair of smooth conics satisfies the Poncelet tetragon condition if and only if \(H_3 = 0\). We follow the approach from the previous section, computing the \(\mathbb{F}_q\)-roots of \(H_3(r,s)\) in each pencil. Using the Hasse-Weil estimate, we calculate the asymptotic number of \(\mathbb{F}_q\)-rational points on the curve \(H_3(r,s) = 0\) as \(q \to \infty\). The numerator of the density result in the table above corresponds to the number of geometrically irreducible components of \(H_3(r,s) = 0\) that is defined over $\mathbb{F}_q$. Additionally, we ignore the smoothness requirement of the conics when calculating the asymptotic density, as each pencil contains at most three singular conics.

\begin{itemize}
    \item  \textbf{Intersection type $(1,1,1,1)$}: 

    Using the same generators for the pencil as in Section \ref{triangle}, we factor \(H_3(r,s)\) over \(\mathbb{F}_q\) (after dropping denominators)
    \[
    H_3(r,s) = (-2rs + s^2 + r)(s^2 - r)(s^2 + r - 2s).
    \]

    Each factor is also irreducible over \(\overline{\mathbb{F}_q}\). Hence, \(H_3(r,s)\) has three geometrically irreducible components defined over \(\mathbb{F}_q\). By the Hasse-Weil estimate, \(H_3(r,s)\) has \(3q + O(\sqrt{q})\) \(\mathbb{F}_q\)-rational points. Thus, the density of conic pairs satisfying the Poncelet tetragon condition is \(\frac{3}{q} + O(q^{-3/2})\) in this pencil.

    \item \textbf{Intersection type $(2,1,1)$}: 

    Let the pencil generators be \(F = xy\) and \(G = y^2 + yz + xz + ez^2\), where \(T^2 + T + e\) is irreducible over \(\mathbb{F}_q\). After dropping denominators, \(H_3(r,s)\) factors as
    \[
    H_3(r,s) = (-2rse + s^2e + r - 1)(s^4e^2 - 2s^3e + 4r^2e - 8rse + 6s^2e - r^2 + 2rs - 2s + 1).
    \]
    The second term factors over $\overline{\mathbb{F}_q}$ as
    \[
    e^2(s^2 + Ar + Bs + e^{-1})(s^2 - Ar + (-2e^{-1} - B)s + e^{-1}),
    \] 
    where \(A = e^{-1}\sqrt{1 - 4e}\) and \(B = -e^{-1} - e^{-1}\sqrt{1 - 4e}\). Since \(T^2 + T + e\) is irreducible over \(\mathbb{F}_q\), \(\sqrt{1 - 4e} \in \mathbb{F}_{q^2} \setminus \mathbb{F}_q\), and thus \(A, B \in \mathbb{F}_{q^2} \setminus \mathbb{F}_q\).

    Therefore, \(H_3(r,s)\) has three irreducible components over \(\overline{\mathbb{F}_q}\), one of which is defined over \(\mathbb{F}_q\), contributing \(q + O(\sqrt{q})\) \(\mathbb{F}_q\)-rational points. The $\mathbb{F}_q$-rational points on the second \(\mathbb{F}_q\)-component arise from the intersection of its two \(\overline{\mathbb{F}_q}\)-components, contributing at most four points. Thus, \(H_3(r,s)\) has \(q + O(\sqrt{q})\) rational points, and the density of conic pairs satisfying the Poncelet tetragon condition is \(\frac{1}{q} + O(q^{-3/2})\) in this pencil.

    \item \textbf{Intersection type $(2,2)$}:
    
    We choose generators \(F = xy\) and \(G = e_1x^2 + e_2y^2 + xz + yz + z^2\), where \(T^2 + T + e_1\) and \(T^2 + T + e_2\) are irreducible over \(\mathbb{F}_q\). We factor \(H_3(r,s)\) over $\mathbb{F}_q$ as
    \[
    H_3(r,s) = (-2rs + s^2 + r + 4e_1e_2 - e_1 - e_2)(s^2 + Ar + Bs + C)(s^2 - Ar + (-2 - B)s + C),
    \]
    where \(A = \sqrt{(1 - 4e_1)(1 - 4e_2)}\), \(B = -1 - \sqrt{(1 - 4e_1)(1 - 4e_2)}\), and \(C = e_1 + e_2 - 4e_1e_2\). We have $A,B,C\in\mathbb{F}_q$, since \(T^2 + T + e_1\) and \(T^2 + T + e_2\) are irreducible over \(\mathbb{F}_q\). 

    Therefore, $H_3(r,s)$ has $3$ components over $\mathbb{F}_q$, and those components are also irreducible over $\overline{\mathbb{F}_q}$. It follows that the density of conic pairs satisfying the Poncelet tetragon condition is $3/q + O(q^{-3/2})$ in this pencil.

    \item \textbf{Intersection type $(4)$}:
    
    Let the generators be \(F = x^2 - ay^2\) and \(G = z^2 - by^2 + 2cxy\), where \(\sqrt{a} \notin \mathbb{F}_q\) and \(\sqrt{b^2 - 4ac^2} \notin \mathbb{F}_q\). Factoring \(H_3(r,s)\) over \(\overline{\mathbb{F}_q}\), we have
    \[
    H_3(r,s) = a^2(2rsa - s^2a + rb + c^2)(s^2 + Ar + Bs + a^{-1}c^2)(s^2 - Ar + (2a^{-1}b - B)s + a^{-1}c^2),
    \]
    where \(A = a^{-1}\sqrt{b^2 - 4ac^2}\) and \(B = a^{-1}b - a^{-1}\sqrt{b^2 - 4ac^2}\). Since \(A, B \notin \mathbb{F}_q\), \(H_3(r,s)\) has one \(\mathbb{F}_q\)-component and two conjugate \(\mathbb{F}_{q^2}\)-components. By a similar argument as in type \((2,1,1)\), the density is \(\frac{1}{q} + O(q^{-3/2})\).
    
    \item \textbf{Intersection type $(3,1)$}:

    We pick generators $F = y^2-xz$ and $G = x^2+by^2+cxy+yz$ for this pencil, where $T^3+bT^2+cT+1$ is irreducible over $\mathbb{F}_q$. Then, we can factor $H_3(r,s)$ over $\mathbb{F}_q$ as 
        \begin{align*}
        H_3(r,s) = &(rs^2 + (-2\alpha)rs + (2\alpha+b)s^2 + (-2\alpha^2-2b\alpha-c)r + (2\alpha^2+2b\alpha+2c)s + 1) 
        \\& \cdot (rs^2 + (-2\alpha')rs + (2\alpha'+b)s^2 + (-2\alpha'^2-2b\alpha'-c)r + (2\alpha'^2+2b\alpha'+ 2c)s + 1)  
        \\ & \cdot (rs^2 + (-2\alpha'')rs + (2\alpha''+b)s^2 + (-2\alpha''^2-2b\alpha''-c)r + (2\alpha''^2+2b\alpha''+2c)s + 1),
    \end{align*}
    where $\alpha, \alpha', \alpha''\in\mathbb{F}_{q^3}$ are roots to $T^3+bT^2+cT+1$.

    The $\mathbb{F}_q$-rational points of $H_3(r,s) = 0$ are in the intersection of these $3$ conjugate cubic curves defined over $\mathbb{F}_{q^3}$, which has at most $9$ points. Hence, the density of conic pairs satisfying the Poncelet tetragon condition is $0+O(q^{-2})$ in this pencil. We will prove in the next section that this density is exactly $0$.
    % TODO, Where should I have this proof????
\end{itemize}
\end{proof}

\section{Poncelet $n$-gons and $n$-torsion points of elliptic curves over function fields}\label{Sec:ngon}
\subsection{Connection between Poncelet $n$-gon and $n$-torsion points of a family of elliptic curves}
By the theorem of Griffiths and Harris (Theorem \ref{Thm:GH}), the conic pair \((\mathcal{A}, \mathcal{B})\) satisfies the Poncelet \(n\)-gon condition if and only if the point \((0, \sqrt{\det B})\) is an \(n\)-torsion point on the elliptic curve \(y^2 = \det(xA + B)\). To count the number of smooth conic pairs within a pencil generated by \(\mathcal{A}\) and \(\mathcal{B}\) that satisfy the Poncelet \(n\)-gon condition, we fix the first conic $\mathcal{A}$ and allow the second conic to vary across all members of the pencil. This process is equivalent to counting the number of \(n\)-torsion points on the elliptic curve \(y^2 = \det(xA + B)\) with \(x\)-coordinates in \(\mathbb{F}_q\). Furthermore, we show that varying the first conic in the pencil corresponds to selecting a different elliptic curve from a family of elliptic curves.  

The following results formalize these observations:

\begin{lem}\label{lem:3.1}
    Let \((\mathcal{A}, \mathcal{B})\) be a pair of smooth conics with transversal intersection, and let \(E: y^2 = \det(xA + B)\) be the corresponding elliptic curve. Define  
    \[
    n_A = \left|\{c \in \mathbb{F}_q : (\mathcal{A}, c \mathcal{A} + \mathcal{B}) \text{ satisfies the Poncelet \(n\)-gon condition, and } c \mathcal{A} + \mathcal{B} \text{ is smooth}\}\right|.
    \] 
    Then, 
    \[
    n_A = \frac{1}{2} \left|\{P \in E[n] \setminus E[2] : x(P) \in \mathbb{F}_q\}\right|.
    \]
\end{lem}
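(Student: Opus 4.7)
The plan is to apply the Griffiths--Harris theorem (Theorem \ref{Thm:GH}) to each pair $(\mathcal{A}, c\mathcal{A}+\mathcal{B})$ for $c\in\mathbb{F}_q$, and then translate the resulting torsion condition back to the fixed curve $E$ by an affine change of variables. The elliptic curve attached to $(\mathcal{A}, c\mathcal{A}+\mathcal{B})$ is $E_c : y^2 = \det(xA + cA + B) = \det\bigl((x+c)A + B\bigr)$, so Theorem \ref{Thm:GH} says the pair satisfies the Poncelet $n$-gon condition exactly when $(0, \sqrt{\det(cA+B)})$ is an $n$-torsion point on $E_c$. I would then observe that the substitution $u = x+c$ defines an affine isomorphism $E_c \xrightarrow{\sim} E$ over $\mathbb{F}_q$ that sends the point at infinity to the point at infinity, and under this isomorphism the distinguished point $(0, \sqrt{\det(cA+B)})$ on $E_c$ goes to $(c, \sqrt{\det(cA+B)})$ on $E$. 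Hence the Poncelet $n$-gon condition for $(\mathcal{A}, c\mathcal{A}+\mathcal{B})$ translates to the condition that some $P \in E[n]$ has $x(P) = c$.

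Next, I would handle the smoothness requirement. The conic $c\mathcal{A}+\mathcal{B}$ is singular precisely when $\det(cA+B)=0$, which is the same as saying the point of $E$ with $x$-coordinate $c$ has $y$-coordinate $0$, i.e.\ that it is a nontrivial $2$-torsion point. Therefore insisting that $c\mathcal{A}+\mathcal{B}$ be smooth corresponds exactly to insisting that the associated torsion point lie in $E[n]\setminus E[2]$. Combining the two translations, $n_A$ equals the number of $c\in\mathbb{F}_q$ occurring as the $x$-coordinate of some $P\in E[n]\setminus E[2]$.

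Finally, I would account for the factor of $1/2$ by analyzing the $x$-coordinate map. For $P=(c,y)\in E[n]\setminus E[2]$ one has $y\neq 0$, so $-P = (c,-y)$ is a distinct point of $E[n]\setminus E[2]$ with the same $x$-coordinate. Thus the restriction of $P \mapsto x(P)$ to $\{P\in E[n]\setminus E[2] : x(P)\in\mathbb{F}_q\}$ is a $2$-to-$1$ surjection onto the set of $c$'s counted by $n_A$ (note we only require $x(P)\in\mathbb{F}_q$, not $y(P)\in\mathbb{F}_q$, which matches the definition of $n_A$), yielding the claimed equality.

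The only point that genuinely needs care is the verification that the translation $u = x+c$ induces an isomorphism of elliptic curves preserving $n$-torsion. This is the main (but mild) obstacle: it follows from the fact that the group law on any cubic of the form $y^2 = f(x)$ is determined by the ``three collinear affine points sum to the point at infinity'' rule with the point at infinity as identity, and an affine translation in the $x$-variable sends lines to lines and fixes the point at infinity. Hence it is a group isomorphism over $\overline{\mathbb{F}_q}$ and therefore carries $n$-torsion to $n$-torsion, which is all that is needed.
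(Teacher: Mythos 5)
Your proof is correct and takes essentially the same approach as the paper: apply the Griffiths--Harris theorem to each pair $(\mathcal{A}, c\mathcal{A}+\mathcal{B})$, translate the distinguished point back to $E$ via the shift $x \mapsto x+c$, exclude $2$-torsion to enforce smoothness of $c\mathcal{A}+\mathcal{B}$, and divide by $2$ to account for $P$ and $-P$ sharing an $x$-coordinate. The only addition in your write-up is the explicit check that the $x$-translation is a group isomorphism, which the paper leaves implicit.
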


\begin{thm}\label{thm:3.2}
   Let \(\mathcal{P}\) be a pencil with intersection type \((1,1,1,1)\) or \((2,2)\), and \(E_\lambda\) be the Legendre family of elliptic curves:
    \[
    E_\lambda : y^2 = x(x - 1)(x - \lambda).
    \]
    We view $E_\lambda$ as an elliptic curve over the function field $\mathbb{F}_q(\lambda)$, and let \(E_{\lambda_0}\) denote the reduction of \(E_\lambda\) at a degree 1 place \(\lambda_0\in\mathbb{P}^1(\mathbb{F}_q)\). 

    Then, the number of smooth conic pairs in the pencil \(\mathcal{P}\) that satisfy the Poncelet \(n\)-gon condition, denoted by \(n^\mathcal{P}\), is given by
    \[
    n^\mathcal{P} = \frac{1}{2}\sum_{\lambda_0 \in \mathbb{F}_q \setminus \{0,1\}} \left|\{P \in E_{\lambda_0}[n] \setminus E_{\lambda_0}[2] : x(P) \in \mathbb{F}_q\}\right|.
    \]
\end{thm}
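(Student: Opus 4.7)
The plan is to apply Lemma \ref{lem:3.1} pointwise to each smooth conic $\mathcal{A}$ in the pencil $\mathcal{P}$ and then reindex the resulting sum by the Legendre parameter of the associated elliptic curve. Writing $n^\mathcal{P} = \sum_{\mathcal{A}} n_\mathcal{A}$ over the $q - 2$ smooth conics $\mathcal{A} \in \mathcal{P}$, Lemma \ref{lem:3.1} (applied with $\mathcal{B}$ any fixed conic in $\mathcal{P}$ distinct from $\mathcal{A}$; the value $n_\mathcal{A}$ is independent of this choice, since $\{c\mathcal{A} + \mathcal{B} : c \in \mathbb{F}_q\} = \mathcal{P} \setminus \{\mathcal{A}\}$ projectively) gives
$$
n_\mathcal{A} = \tfrac{1}{2}\bigl|\{P \in E_\mathcal{A}[n] \setminus E_\mathcal{A}[2] : x(P) \in \mathbb{F}_q\}\bigr|
$$
for the elliptic curve $E_\mathcal{A} : y^2 = \det(xA + B)$. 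The task reduces to matching this sum over $\mathcal{A}$ with the sum over $\lambda_0 \in \mathbb{F}_q \setminus \{0, 1\}$.

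The crucial geometric input is that for intersection type $(1,1,1,1)$ or $(2,2)$, all three singular members $\sigma_1, \sigma_2, \sigma_3 \in \mathcal{P}$ are defined over $\mathbb{F}_q$; for type $(2,2)$ this uses the observation from Section \ref{triangle} that $(1 - 4e_1)(1 - 4e_2)$ is a square in $\mathbb{F}_q$. Choosing generators $F, G$ of $\mathcal{P}$ so that $\sigma_1, \sigma_2, \sigma_3$ correspond to $\eta = 0, 1, \infty$ in the parametrization $\eta F + G$, the three $2$-torsion $x$-coordinates of $E_\mathcal{A}$ are the scalars $c \in \mathbb{F}_q$ for which $cA + B$ is singular. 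Since $c \mapsto cA + B$ is a linear-fractional parametrization of $\mathcal{P}$ sending $c = \infty$ to $\mathcal{A}$, the Legendre parameter $\lambda_0(\mathcal{A})$ obtained by normalizing the three roots to $0, 1, \lambda_0$ equals, with a fixed ordering, the cross-ratio of $(\sigma_1, \sigma_2, \sigma_3, \mathcal{A})$ in $\mathcal{P}$. Explicitly, for $\mathcal{A} = rF + G$ and $\mathcal{B} = sF + G$ in the type $(1,1,1,1)$ pencil, the three roots of $\det(xA + B)$ are $-1, -s/r, (1-s)/(r-1)$ and $\lambda_0 = r/(r-1)$; the analogous computation for type $(2,2)$ uses the generators from Section \ref{triangle} and goes through unchanged. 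In both cases $\mathcal{A} \mapsto \lambda_0(\mathcal{A})$ is a M\"obius function, hence a bijection from the smooth members of $\mathcal{P}$ onto $\mathbb{F}_q \setminus \{0, 1\}$.

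The change of variable putting $E_\mathcal{A}$ into Legendre form also rescales $y$ by a factor involving $\sqrt{\det A}$ that need not lie in $\mathbb{F}_q$, so $E_\mathcal{A}$ is a priori only a quadratic twist of $E_{\lambda_0(\mathcal{A})}$ over $\mathbb{F}_q$. However, the count $|\{P \in E[n] \setminus E[2] : x(P) \in \mathbb{F}_q\}|$ is invariant under quadratic twist: the $\overline{\mathbb{F}}_q$-isomorphism $E \to E^{(d)}$ given by $(x, y) \mapsto (x, y/\sqrt{d})$ preserves $x$-coordinates and is a group isomorphism, so it bijects $E[n]$ with $E^{(d)}[n]$ while preserving the condition $x(P) \in \mathbb{F}_q$. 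Combining the bijection $\mathcal{A} \leftrightarrow \lambda_0$ with this twist invariance yields the identity. The main obstacle is the explicit change-of-variable computation identifying $E_\mathcal{A}$ with a quadratic twist of $E_{\lambda_0(\mathcal{A})}$ and verifying that $\mathcal{A} \mapsto \lambda_0(\mathcal{A})$ has image $\mathbb{F}_q \setminus \{0, 1\}$; beyond this, the argument is essentially formal.
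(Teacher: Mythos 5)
Your proof is correct and follows essentially the same approach as the paper's: both decompose $n^\mathcal{P}$ as a sum over smooth first conics $\mathcal{A}$ via Lemma \ref{lem:3.1}, pass to Legendre form by an $\mathbb{F}_q$-affine change in $x$ (exploiting that the three $2$-torsion $x$-values lie in $\mathbb{F}_q$ for these two intersection types), absorb the resulting $y$-rescaling by twist-invariance of the count $|\{P\in E[n]\setminus E[2]: x(P)\in\mathbb{F}_q\}|$, and conclude via the M\"obius bijection $\mathcal{A}\mapsto\lambda_0(\mathcal{A})$. The only cosmetic difference is that the paper normalizes to generators $\mathcal{I}$ and $\mathcal{B}$ (with $B$ having eigenvalues $0,-1,-\eta_3$) and works with $E^\lambda=\det(x(B+\lambda I)+B)$, whereas you compute directly with the explicit generators $F,G$ from Section \ref{triangle} and extract $\lambda_0=r/(r-1)$ as a cross-ratio.
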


We also observe that \(E_\lambda\) has \(3\) degree $1$ places of bad reductions, specifically at \(0\), \(1\), and \(\infty\). This corresponds to the fact that there are \(3\) singular conics defined over \(\mathbb{F}_q\) in these pencils.

Similar results can be derived for pencils with other intersection types. For pencils of intersection type \((2,1,1)\) and \((4)\), the corresponding family of elliptic curves is 
\[
E_\lambda: y^2 = (x - \lambda)(x^2 - b),
\]
where \(b \in \mathbb{F}_q\) is a fixed constant that is not a square in \(\mathbb{F}_q\). 

For pencils with intersection type \((3,1)\), the corresponding family of elliptic curves is 
\[
E_\lambda: y^2 = \left(x - \frac{\alpha}{\alpha + \lambda}\right)\left(x - \frac{\alpha'}{\alpha' + \lambda}\right)\left(x - \frac{\alpha''}{\alpha'' + \lambda}\right),
\]
where \(\alpha \in \mathbb{F}_{q^3} \setminus \mathbb{F}_q\), and \(\alpha'\), \(\alpha''\) are the conjugates of \(\alpha\).

\begin{rmk}
The theorem above also shows that the number of smooth conic pairs that satisfy the Poncelet \(n\)-gon condition is the same in the pencils with intersection types \((1,1,1,1)\) and \((2,2)\), and also the same in the pencils with intersection type \((2,1,1)\) and \((4)\). This matches up with our computation for the Poncelet triangle and tetragon cases in Section \ref{triangle} and Section \ref{tetragon}. 
\end{rmk}

Now, we use the Griffiths-Harris Theorem (Theorem \ref{Thm:GH}) along with a change of coordinates to prove the above results. 

\begin{proof}[Proof of Lemma \ref{lem:3.1}]
    When counting the number of smooth conic pairs $(\mathcal{A}, c\mathcal{A}+\mathcal{B})$ that satisfy the Poncelet $n$-gon condition, we require that $c\neq \infty$, as otherwise \(c \mathcal{A} + \mathcal{B} = \mathcal{A}\) would not intersect transversally with \(\mathcal{A}\).

    Let \(E^c\) be the elliptic curve \(y^2 = \det(xA + (c A + B)) = \det((x + c)A + B)\). By the Griffiths-Harris theorem, the pair \((\mathcal{A}, c \mathcal{A} + \mathcal{B})\) satisfies the Poncelet \(n\)-gon condition if and only if the point \(P \in E^c\) with \(x(P) = 0\) is an \(n\)-torsion point of \(E^c\). Using the change of coordinates \(x' = x + c\), this is equivalent to requiring that the point \(P^c \in E\) with \(x(P^c) = c \in \mathbb{F}_q\) be an \(n\)-torsion point of the elliptic curve \(E: y^2 = \det(xA + B)\).

    Next, we exclude those \(c\) such that \(c \mathcal{A} + \mathcal{B}\) is a singular conic. If \(c \mathcal{A} + \mathcal{B}\) is singular, then \(y^2 = \det(c A + B) = 0\), which implies that the point \(P' = (c, 0)\) is a 2-torsion point of \(E\). The converse also holds. Therefore, we exclude all 2-torsion points from the counting.

    Finally, we divide the result by 2, since for any point \(P\), $-P\neq P$ also has the same $x$-coordinate.

    %TODO, maybe this is proof is still not clear%
\end{proof}

%TODO, maybe add the part that B can be singular

We now prove a criterion for two conics to intersect transversally before proceeding to the proof of Theorem \ref{thm:3.2}. An important observation is that if the field \(k\) has characteristic greater than \(2\), every conic is projectively equivalent to the conic \(\mathcal{I}: x^2 + y^2 + z^2 = 0\). Without loss of generality, we may assume that the first conic is \(\mathcal{I}\). This leads to the following result:

\begin{lem}
    The conic \(\mathcal{I}\) intersects \(\mathcal{B}\) transversally if and only if matrix representation of $\mathcal{B}$ has three distinct eigenvalues over \(\overline{k}\).
\end{lem}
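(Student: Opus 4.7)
The plan is to reduce the transversality condition to a statement about the singular members of the pencil generated by $\mathcal{I}$ and $\mathcal{B}$.  Write $I$ for the identity matrix (representing $\mathcal{I}$) and $B$ for the matrix of $\mathcal{B}$.  A member $\eta \mathcal{I}+\mathcal{B}$ of the pencil has matrix $\eta I + B$, and is singular precisely when
$$\det(\eta I + B)=0,$$
which is a cubic polynomial in $\eta$ whose roots are exactly $-\lambda_1,-\lambda_2,-\lambda_3$, the negatives of the eigenvalues of $B$ over $\overline{k}$.  Thus the pencil has three distinct singular members over $\overline{k}$ if and only if $B$ has three distinct eigenvalues.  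The lemma will follow once we identify transversality of intersection with the pencil having three distinct singular members.

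To prove the transversality direction, I would argue as follows over $\overline{k}$.  If $\mathcal{I}\cap\mathcal{B}=\{P_1,P_2,P_3,P_4\}$ consists of four distinct points (with no three collinear, since otherwise $\mathcal{I}$ would contain a line and be singular), then any conic in the pencil through all four $P_i$ is determined, and the three singular members of the pencil are precisely the three line-pairs $L_{12}\cup L_{34}$, $L_{13}\cup L_{24}$, $L_{14}\cup L_{23}$ coming from the three ways to split the $P_i$ into two pairs.  These are visibly pairwise distinct as conics, so the cubic $\det(\eta I+B)$ has three distinct roots.

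For the converse, I would use simultaneous diagonalization.  If $B$ has three distinct eigenvalues, then its three eigenspaces are one-dimensional, and an eigenspace calculation against the nondegenerate symmetric form corresponding to $I$ shows that eigenvectors for distinct eigenvalues are $I$-orthogonal; after rescaling one obtains coordinates in which $\mathcal{I}$ becomes $x^2+y^2+z^2=0$ and $\mathcal{B}$ becomes $\mu_1 x^2+\mu_2 y^2+\mu_3 z^2=0$ with the $\mu_i$ pairwise distinct.  Solving the resulting $2\times 2$ linear system in $x^2,y^2,z^2$ then gives an explicit description of $\mathcal{I}\cap\mathcal{B}$ as four distinct points of the form $[\pm a:\pm b:\pm c]$ (mod the overall sign) with $a,b,c$ nonzero because the $\mu_i$ are distinct; distinctness of the four points, and in particular transversality, follows directly.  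Conversely, if two eigenvalues coincide, essentially the same explicit solution shows two of these points collide, forcing a tangency.

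The main obstacle is handling the converse direction cleanly, and in particular the case in which $B$ has a repeated eigenvalue but is not diagonalizable.  The cleanest way around this, rather than appealing to a normal form for a pencil of symmetric bilinear forms, is to argue contrapositively: if the intersection is not transversal, then two of the four intersection points of $\mathcal{I}$ and $\mathcal{B}$ collide, which forces two of the three line-pair partitions to become equal (or one to degenerate to a double line), hence the cubic $\det(\eta I + B)$ acquires a repeated root and $B$ acquires a repeated eigenvalue.
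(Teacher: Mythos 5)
Your proposal is correct, but it takes a genuinely different route from the paper's proof, and it also contains some redundancy. For the direction ``transversal intersection $\Rightarrow$ three distinct eigenvalues,'' the paper invokes the fact (already established via Riemann--Hurwitz / Griffiths--Harris earlier in the paper) that transversality makes $y^2=\det(xI+B)$ an elliptic curve, so the cubic must have simple roots; you instead use elementary projective geometry — four distinct intersection points, no three collinear, give exactly three distinct line-pair conics as the singular members of the pencil. Your argument is more self-contained but slightly longer. For the converse, the paper notes that $B+\eta_1 I$ has rank $2$, hence represents a pair of distinct lines, and then asserts that these lines meet $\mathcal{I}$ in four distinct points because $\mathcal{B}$ lies in the pencil through them; your simultaneous-diagonalization argument is more explicit and in fact supplies the distinctness justification that the paper leaves terse (you should, however, make explicit that the pairwise $I$-orthogonality of the three eigenvectors together with nondegeneracy of $I$ forces each eigenvector to be non-isotropic, so the rescaling to $x^2+y^2+z^2=0$ is legitimate).

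Your final two paragraphs are unnecessary and somewhat tangled: having already proved ``transversal $\Rightarrow$ three distinct eigenvalues'' via the line-pair count, you do not need the additional contrapositive argument ``repeated eigenvalue $\Rightarrow$ tangency,'' and so the concern about a non-diagonalizable $B$ with a repeated eigenvalue never actually arises. The two halves you proved directly (line pairs for one direction, simultaneous diagonalization for the other) already cover the biconditional, and everything after ``Solving the resulting $2\times 2$ linear system\ldots follows directly'' can be cut.
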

\begin{proof}
    Let \(I\) and \(B\) denote the matrix representations of the conics \(\mathcal{I}\) and \(\mathcal{B}\) respectively.  
    
    Suppose \(\mathcal{I}\) and \(\mathcal{B}\) intersect transversally. Then the equation \(y^2 = \det(xI + B)\) defines an elliptic curve. Rewriting this equation, we have \(y^2 = (x - \eta_1)(x - \eta_2)(x - \eta_3)\) for some \(\eta_1, \eta_2, \eta_3 \in \overline{k}\). Since the curve is an elliptic curve, the roots \(\eta_1, \eta_2, \eta_3\) are distinct. Consequently, the eigenvalues of \(B\), which are \(-\eta_1, -\eta_2, -\eta_3\), are also distinct.

    Conversely, assume that \(B\) has three distinct eigenvalues \(-\eta_1, -\eta_2, -\eta_3 \in \overline{k}\). Then the matrix \(B + \eta_1 I\) has rank \(2\), as each eigenspace of \(B\) has dimension \(1\). This implies that the conic represented by \(B + \eta_1I\) is a union of two distinct lines. These two lines intersect \(\mathcal{I}\) at four distinct points. Since \(\mathcal{B}\) lies in the pencil generated by \(\mathcal{I}\) and \(B + \eta_1I\), it follows that \(\mathcal{B}\) intersects \(\mathcal{I}\) transversally at those four points.
\end{proof}

\begin{proof}[Proof of Theorem \ref{thm:3.2}]
    % TODO, There is a smoothness issue in the matrix B. The original GH require that B is smooth, but it does not matter. As long as A, B intersect transversally, $\det(xA+B)$ is an elliptic curve. The point $P$ with x coordinate being zero is a $2$-torsion point.
    %
    By a projective change of coordinates, we may assume that the pencil \(\mathcal{P}\) is generated by the smooth conics \(\mathcal{I}\) and \(\mathcal{B}\), where the corresponding matrix \(B\) has three distinct eigenvalues \(-\eta_1, -\eta_2, -\eta_3\). Since \(\mathcal{P}\) has intersection type \((1,1,1,1)\) or \((2,2)\), the three singular conics in this pencil are defined over \(\mathbb{F}_q\). Thus, we have \(\eta_1, \eta_2, \eta_3 \in \mathbb{F}_q\). By replacing the generators of \(\mathcal{P}\) with \(\mathcal{I}\) and \(\mathcal{B} + \eta_1 \mathcal{I}\), we may assume \(\eta_1 = 0\). Additionally, by scaling the matrix \(B\) (choosing a different representation of the conic \(\mathcal{B}\)), we can set \(\eta_2 = 1\). 

    The singular conics in the pencil \(\mathcal{P}\) are then given by \(\mathcal{B}, \mathcal{B} + \mathcal{I}, \mathcal{B} + \eta_3 \mathcal{I}\). We fix the first smooth conic to be \(\mathcal{B} + \lambda \mathcal{I}\) for \(\lambda \in \mathbb{P}^1(\mathbb{F}_q) \setminus \{0, 1, \eta_3\}\) and consider the corresponding elliptic curve 
    \[
    E^\lambda: y^2 = \det(x(B + \lambda I) + B),
    \]
    with \(E^\infty: y^2 = \det(xI + B)\). By Lemma \ref{lem:3.1}, the number of smooth conic pairs in $\mathcal{P}$ with the first conic being \(\mathcal{B} + \lambda \mathcal{I}\) that satisfy the Poncelet \(n\)-gon condition is 
    \[
    n_{\mathcal{B} + \lambda \mathcal{I}} = \frac{1}{2} \left| \{ P \in E^\lambda[n] \setminus E^\lambda[2] : x(P) \in \mathbb{F}_q \} \right|.
    \]
    The total number of smooth conic pairs in \(\mathcal{P}\) satisfying the Poncelet \(n\)-gon condition is 
    \[
    n^\mathcal{P} = \sum_{\lambda \in \mathbb{P}^1(\mathbb{F}_q) \setminus \{0, 1, \eta_3\}} n_{\mathcal{B} + \lambda \mathcal{I}}.
    \]
    
    Next, we show that the family of elliptic curves \(\{E^\lambda : \lambda \in \mathbb{P}^1(\mathbb{F}_q) \setminus \{0, 1, \eta_3\}\}\) can be replaced by the Legendre family \(\{E_{\lambda_0} : \lambda_0 \in \mathbb{P}^1(\mathbb{F}_q) \setminus \{0, 1, \infty\}\}\), completing the proof.

    Consider the elliptic curve \(E^\lambda: y^2 = \det(x(B + \lambda I) + B)\) for \(\lambda \in \mathbb{P}^1(\mathbb{F}_q) \setminus \{0, 1, \eta_3\}\). Since the conic \(\mathcal{B} + \lambda \mathcal{I}\) is non-singular, \(\det(B + \lambda I) \neq 0\), and we can rewrite the defining equation of \(E^\lambda\) as 
    \[
    E^\lambda : y^2 = \det(B + \lambda I) \det(xI + B(B + \lambda I)^{-1}).
    \]
    Performing a coordinate change \(y' = y / \sqrt{\det(B + \lambda I)}\), which does not affect the field of definition of the \(x\)-coordinates, we may rewrite the equation of $E^\lambda$ as 
    \[
    E^\lambda : y^2 = x \left(x - \frac{1}{-1 + \lambda}\right) \left(x - \frac{\eta_3}{-\eta_3 + \lambda}\right).
    \]
    % This is computed by using B, B+lambda I has the same eigenspace to compute the eigenvalues of B(B+\lambda I)^{-1}
    Using another coordinate change \(x' = (-1 + \lambda)x\), \(y' = y / \sqrt{(-1 + \lambda)^3}\), which again preserves the field of definition of the \(x\)-coordinate, we may replace the equation for $E^\lambda$ as
    \[
    E^\lambda : y^2 = x(x - 1) \left(x - \frac{-\eta_3 + \eta_3\lambda}{-\eta_3 + \lambda}\right).
    \]

    The map \(\lambda \mapsto \frac{-\eta_3 + \eta_3\lambda}{-\eta_3 + \lambda}\) is a bijection between \(\mathbb{P}^1(\mathbb{F}_q) \setminus \{0, 1, \eta_3\}\) and \(\mathbb{P}^1(\mathbb{F}_q) \setminus \{0, 1, \infty\}\). Hence, the family \(\{E^\lambda : \lambda \in \mathbb{P}^1(\mathbb{F}_q) \setminus \{0, 1, \eta_3\}\}\) is precisely the Legendre family \(\{E_{\lambda_0} : \lambda_0 \in \mathbb{P}^1(\mathbb{F}_q) \setminus \{0, 1, \infty\}\}\).

\end{proof}

\subsection{Proof of Theorem \ref{thm:n-gon}}
We now prove Theorem \ref{thm:n-gon}, assuming that Theorem \ref{thm:main1} holds for all odd integers \( n \) coprime to \( q \) for the elliptic curves $E_\lambda$ in Theorem \ref{thm:3.2}. For the reader's convenience, we restate Theorem \ref{thm:n-gon} below:

\begin{thm*}(Density of smooth conic pairs satisfying the Poncelet \( n \)-gon condition) \par
    Let \(\mathbb{F}_q\) be a finite field with characteristic greater than \( 3 \), and let \( n \) be an odd integer coprime to \( q \). Then,
    \[
    \frac{|\Gamma_n|}{|\Psi|} = \frac{d(n)-1}{q} + O(q^{-3/2}),
    \]
    where \( d(n) \) is the number of divisors of \( n \), \( \Psi \) is the set of ordered pairs of smooth conics with transversal intersection, and \( \Gamma_n \subset \Psi\) is the subset of pairs of smooth conics satisfying the Poncelet \( n \)-gon condition.
\end{thm*}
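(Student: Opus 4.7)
The plan is to combine the pencil-by-pencil reduction of Theorem \ref{thm:3.2} (and its analogs for the other three pencil types) with the count of Theorem \ref{thm:main1} to evaluate $|\Gamma_n|$ to leading order, and then divide by $|\Psi|$. Each pencil's contribution has already been rewritten in terms of the $x$-coordinates of $n$-torsion points on a specific family of elliptic curves over $\mathbb{F}_q(\lambda)$; the odd-$n$ hypothesis will let us replace those $x$-coordinates with $\mathbb{F}_q$-roots of the $n$-th division polynomial.

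First, for each of the five pencil types, Theorem \ref{thm:3.2} and its analogs give
$$n^{\mathcal{P}} = \frac{1}{2} \sum_{\lambda_0} \bigl|\{P \in E_{\lambda_0}[n] \setminus E_{\lambda_0}[2] : x(P) \in \mathbb{F}_q\}\bigr|,$$
summed over the degree-$1$ places of good reduction. Since $n$ is odd, $E_{\lambda_0}[n] \cap E_{\lambda_0}[2] = \{O\}$, and since $-P \neq P$ for any nontrivial $n$-torsion point $P$, each $\mathbb{F}_q$-root of $\Lambda_n(x,\lambda_0)$ contributes exactly two points to the set above. Hence $n^{\mathcal{P}} = \sum_{\lambda_0} r(n, \lambda_0)$, which by Theorem \ref{thm:main1} equals $(d(n)-1)q + O(\sqrt{q})$, uniformly over the pencil $\mathcal{P}$.

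Next, the plan is to sum over all pencils and take the ratio. The weights $\frac{1}{24}, \frac{1}{4}, \frac{1}{8}, \frac{1}{3}, \frac{1}{4}$ from the Section 2.1 table sum to $1$, so the total number of pencils of the five types is $q^8 - q^6 - q^5 + q^3 = q^8 + O(q^6)$. Each pencil contains $q^2 + O(q)$ ordered pairs of distinct smooth conics, and summing over all pencils gives $|\Psi| = q^{10} + O(q^9)$ (this can be read off from the formulas in Section \ref{triangle}, and in fact equals $q^3(q-1)^2(q+1)(q^2+q+1)(q^2-q+1)$). Therefore
$$|\Gamma_n| = \sum_{\mathcal{P}} n^{\mathcal{P}} = \bigl((d(n)-1)q + O(\sqrt{q})\bigr)\bigl(q^8 + O(q^6)\bigr) = (d(n)-1)q^9 + O(q^{17/2}),$$
and dividing yields $\frac{|\Gamma_n|}{|\Psi|} = \frac{d(n)-1}{q} + O(q^{-3/2})$.

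The main obstacle lies upstream: one must establish Theorem \ref{thm:3.2}-style formulas for pencil types $(2,1,1)$, $(4)$, and $(3,1)$, and verify the hypothesis $\operatorname{SL}_2(\mathbb{Z}/n\mathbb{Z}) \subseteq \Gal(K(E[n])/K)$ of Theorem \ref{thm:main1} for each of the three resulting families of elliptic curves. The Legendre case is classical, but controlling the full mod-$n$ Galois image for the family $y^2 = (x-\lambda)(x^2 - b)$ and for the $\mathbb{F}_{q^3}$-parameter family associated with type $(3,1)$, for every odd $n$ coprime to $q$, is the technical heart of the argument. Granting these inputs, the assembly above is a routine asymptotic calculation.
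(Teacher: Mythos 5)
Your proposal is correct and follows essentially the same route as the paper's own proof: reduce each pencil to the torsion-point count of Theorem~\ref{thm:3.2} (and its analogues for the other pencil types), use the odd-$n$ hypothesis to identify that count with $\sum_{\lambda_0} r(n,\lambda_0)$, invoke Theorem~\ref{thm:main1}, and average over pencils. The technical prerequisites you flag at the end — the analogous families for types $(2,1,1)$, $(4)$, $(3,1)$ and the verification that $\Gal(K(E[n])/K(\mu_n)) = \operatorname{SL}_2(\mathbb{Z}/n\mathbb{Z})$ for all three families — are exactly what Section~4 of the paper establishes (via Yu's theorem for the Legendre curve and a base-change argument to $\mathbb{F}_{q^2}(\lambda)$, $\mathbb{F}_{q^3}(\lambda)$ for the other two families), so the proof as you have outlined it is complete once those inputs are cited.
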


\begin{proof}
    Let \( E_\lambda \) denote one of the elliptic curves defined in Theorem \ref{thm:3.2}. The number of smooth conic pairs in a pencil \( \mathcal{P} \) that satisfy the Poncelet \( n \)-gon condition is given by
    \[
    n^\mathcal{P} = \frac{1}{2} \sum_{\lambda_0} \left| \{ P \in E_{\lambda_0}[n] \setminus E_{\lambda_0}[2] : x(P) \in \mathbb{F}_q \} \right|,
    \]
    where \( \lambda_0 \) ranges over elements of \(\mathbb{P}^1(\mathbb{F}_q)\) such that \( E_\lambda \) has good reduction at \( \lambda_0 \).

    Since \( n \) is odd, the only \( n \)-torsion point on \( E_{\lambda_0} \) that is also a \( 2 \)-torsion point is the identity \( O \). Thus, we can simplify
    \[
    n^\mathcal{P} = \frac{1}{2} \sum_{\lambda_0} \left| \{ P \in E_{\lambda_0}[n] \setminus \{O\} : x(P) \in \mathbb{F}_q \} \right| = \sum_{\lambda_0} r(n, \lambda_0),
    \]
    where \( r(n, \lambda_0) \) is the number of \(\mathbb{F}_q\)-roots of the \( n \)-th division polynomial \( \Lambda(x, \lambda_0) \) of \( E_{\lambda_0} \).

    By Theorem \ref{thm:main1}, we have
    \[
    \sum_{\lambda_0} r(n, \lambda_0) = (d(n)-1)q + O(\sqrt{q}),
    \]
    where \( d(n) \) is the number of divisors of \( n \). Therefore, the density of smooth conic pairs in \( \mathcal{P} \) satisfying the Poncelet \( n \)-gon condition is $\frac{d(n)-1}{q} + O(q^{-3/2})$.

    Since this result holds for all pencils \( \mathcal{P} \), we conclude that the density of smooth conic pairs satisfying the Poncelet \( n \)-gon condition is also
    \[
    \frac{|\Gamma_n|}{|\Psi|} = \frac{d(n)-1}{q} + O(q^{-3/2}).
    \]
\end{proof}

\section{The $n$-torsion points of a family of elliptic curves}
In this section, we develop the tools necessary to prove Theorem \ref{thm:main1}. Specifically, we analyze the arithmetic of the \( n \)-torsion points of an elliptic curve \( E \) defined over the function field \( K = \mathbb{F}_q(\lambda) \). For an elliptic curve \( E_{\lambda_0} \) in the family, which arises as the reduction of \( E \) at a place \( \lambda_0 \), we control the behavior of the \( n \)-torsion points on \( E_{\lambda_0} \) via the Frobenius at \( \lambda_0 \) in the Galois extension \( K(E[n])/K \), where \( K(E[n]) \) is the \( n \)-torsion point field defined earlier in this paper.

\subsection{The $n$-torsion point field}
Let \( E \) be an elliptic curve defined over a field \( K \), and let \( x, y \) be the regular functions appearing in the Weierstrass equation of \( E \). As noted previously, the \( n \)-torsion point field \( K(E[n]) \) is the field extension of \( K \) obtained by adjoining all the \( x \)- and \( y \)-coordinates of points in \( E[n] \).

We assume that \( n \) is coprime to \( \operatorname{char}(K) \). Then, the \( n \)-torsion group \( E[n] \) is isomorphic to \( \mathbb{Z}/n\mathbb{Z} \times \mathbb{Z}/n\mathbb{Z} \). The action of \( \operatorname{Gal}(K^{\text{sep}}/K) \) on \( E[n] \) induces the representation
\[
\rho_{E, n}: \operatorname{Gal}(K^{\text{sep}}/K) \to \operatorname{Aut}(E[n]) \simeq \operatorname{GL}_2(\mathbb{Z}/n\mathbb{Z}),
\]
where the kernel of \( \rho_{E, n} \) is \( \operatorname{Gal}(K^{\text{sep}}/K(E[n])) \). Consequently, the extension \( K(E[n])/K \) is Galois, and we can identify the Galois group with a subgroup of \( \operatorname{GL}_2(\mathbb{Z}/n\mathbb{Z}) \) via the induced representation:
\[
\rho_{E, n}: \Gal(K(E[n])/K) \hookrightarrow \operatorname{GL}_2(\mathbb{Z}/n\mathbb{Z}).
\]

Next, let \( l \) be a prime coprime to \( q \). The Tate module \( T_l(E) = \varprojlim E[l^k] \simeq \mathbb{Z}_l \times \mathbb{Z}_l \) encodes the action of the Galois group on the \( l \)-power torsion points. We define \( K(E[l^\infty]) \) as the field extension obtained by adjoining the coordinates of all points in \( \bigcup_k E[l^k] \), equivalently expressed as \( K(E[l^\infty]) = \bigcup_k K(E[l^k]) \). The Galois group \( \operatorname{Gal}(K(E[l^\infty])/K) \) acts on \( T_l(E) \) and induces the representation
\[
\hat{\rho}_{E, l}: \operatorname{Gal}(K(E[l^\infty])/K) \hookrightarrow \operatorname{GL}_2(\mathbb{Z}_l).
\]
This action is compatible with the multiplication-by-\( l \) map, yielding the congruence \( \rho_{E, l^k} \equiv \hat{\rho}_{E, l} \pmod {l^k} \).

From this point forward, we view an element of \( \operatorname{Gal}(K(E[n])/K) \) as a matrix in \( \operatorname{GL}_2(\mathbb{Z}/n\mathbb{Z}) \) via the representation \( \rho_{E, n} \). We will use this matrix to understand the arithmetic properties of $E[n]$.

\begin{lem}
    Let \( \mu_n \) denote the \( n \)-th roots of unity in \( K^{\text{sep}} \). Then, \( K(\mu_n) \subset K(E[n]) \), and for any \( \sigma \in \Gal(K(E[n])/K) \), we have \( \zeta^\sigma = \zeta^{\det \sigma} \) for all $\zeta\in \mu_n$.
\end{lem}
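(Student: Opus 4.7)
The plan is to deduce both claims from the existence and standard properties of the Weil pairing
\[
e_n : E[n] \times E[n] \longrightarrow \mu_n,
\]
which is a non-degenerate, bilinear, alternating pairing that is Galois-equivariant: for all $\sigma \in \Gal(K^{\mathrm{sep}}/K)$ and all $P, Q \in E[n]$, one has $e_n(P,Q)^\sigma = e_n(P^\sigma, Q^\sigma)$. These properties are standard (see, e.g., Silverman, \emph{The Arithmetic of Elliptic Curves}, III.8) and hold whenever $n$ is coprime to $\operatorname{char}(K)$, which is our standing hypothesis.

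First I would fix a $\mathbb{Z}/n\mathbb{Z}$-basis $(P,Q)$ of $E[n]$; by non-degeneracy of $e_n$, the value $\zeta_0 := e_n(P,Q)$ is a primitive $n$-th root of unity. Since the coordinates of $P$ and $Q$ lie in $K(E[n])$, the element $\zeta_0$ also lies in $K(E[n])$, which forces $K(\mu_n) = K(\zeta_0) \subseteq K(E[n])$.

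Next I would identify $\sigma \in \Gal(K(E[n])/K)$ with its matrix $\rho_{E,n}(\sigma) = \begin{pmatrix} a & b \\ c & d \end{pmatrix}$ in the basis $(P,Q)$, so that $P^\sigma = aP + cQ$ and $Q^\sigma = bP + dQ$. Galois-equivariance and bilinearity of $e_n$, combined with the alternating property $e_n(P,P) = e_n(Q,Q) = 1$ and $e_n(Q,P) = e_n(P,Q)^{-1}$, yield
\[
\zeta_0^\sigma = e_n(P,Q)^\sigma = e_n(aP+cQ,\,bP+dQ) = e_n(P,Q)^{ad-bc} = \zeta_0^{\det\sigma}.
\]
Since $\zeta_0$ is a primitive $n$-th root of unity, every $\zeta \in \mu_n$ is a power of $\zeta_0$, and raising to a power commutes with the Galois action, so the same identity extends: $\zeta^\sigma = \zeta^{\det\sigma}$ for all $\zeta \in \mu_n$.

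There is no real obstacle here beyond quoting the Weil pairing; the only thing to be careful about is the convention for how $\rho_{E,n}$ acts (rows versus columns), but this is immaterial since $\det$ is invariant under transposition. One could alternatively note that the claim is exactly the content of the commutative diagram relating $\rho_{E,n}$ with the cyclotomic character via $\det$, and invoke the Weil pairing as the proof of that diagram.
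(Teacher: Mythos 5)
Your proposal is correct and follows essentially the same route as the paper: both deduce $K(\mu_n)\subset K(E[n])$ and the formula $\zeta^\sigma=\zeta^{\det\sigma}$ from the Galois-equivariance, bilinearity, and alternating properties of the Weil pairing, using a basis $(P,Q)$ of $E[n]$ and the identity $e_n(aP+cQ,\,bP+dQ)=e_n(P,Q)^{ad-bc}$. The only cosmetic difference is that you justify $K(\mu_n)\subset K(E[n])$ by observing $\zeta_0=e_n(P,Q)\in K(E[n])$ directly (which is really the same Galois-equivariance fact phrased in terms of the fixed field), and you spell out the passage from $\zeta_0$ to all of $\mu_n$ via primitivity, which the paper leaves implicit; both are fine.
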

%maybe change mu_n in the element zeta_n for all zeta_n in mu_n
\begin{proof}
We consider Weil's pairing
\[
e_n: E[n] \times E[n] \to \mu_n,
\]
which is compatible with \( \Gal(K^{\text{sep}}/K) \), meaning \( e_n(P^\sigma, Q^\sigma) = e_n(P, Q)^\sigma \). Since \( \sigma \in \Gal(K^{\text{sep}}/K(E[n])) \) fixes all \( n \)-torsion points, it also fixes every element in \( \mu_n \). Thus, \( K(\mu_n) \subset K(E[n]) \).

Now, let \( \{P, Q\} \) be a basis for \( E[n] \), and let \( \sigma \in \Gal(K(E[n])/K) \subset \operatorname{GL}_2(\mathbb{Z}/n\mathbb{Z}) \) be represented by the matrix
\[
\sigma = \begin{bmatrix} a & b \\ c & d \end{bmatrix}.
\]
We know that $e_n(P,Q) = \zeta_n$ is a primitive $n$-th root of unity, and we compute
\[
e_n(P, Q)^\sigma = e_n(P^\sigma, Q^\sigma) = e_n(aP + cQ, bP + dQ) = e_n(P, Q)^{ad - bc} = e_n(P, Q)^{\det \sigma}.
\]
Thus, \( \sigma \in \Gal(K(E[n])/K) \) acts on $\zeta_n$ via $\zeta_n^\sigma = \zeta_n^{\det\sigma}$, and we have $\zeta^\sigma = \zeta^{\det\sigma}$ for all $\zeta\in \mu_n$. 
\end{proof}

Now, assume \( K = \mathbb{F}_q(C) \) is the function field of a smooth curve \( C \) defined over \( \mathbb{F}_q \). Let \( v \) be a degree 1 place of \( K \), and let \( E_v \) be the reduction of \( E \) at \( v \), which is a cubic curve defined over the residue field \( \kappa_v = \mathbb{F}_q \). If \( E \) has good reduction at \( v \), the Néron-Ogg-Shafarevich criterion implies that \( v \) is unramified in the extension \( K(E[n])/K \). Let \( \text{Frob}_v \in \Gal(K(E[n])/K) \) denote the Frobenius of \( v \), viewed as an element of \( \operatorname{GL}_2(\mathbb{Z}/n\mathbb{Z}) \), and let \( \sigma_{q,v} \) be the Frobenius-\( q \)-map of the extension \( \Gal(\kappa_v(E_v[n])/\kappa_v) \subset \operatorname{GL}_2(\mathbb{Z}/n\mathbb{Z}) \). By construction, these elements act identically on \( E_v[n] \), so we have \( \text{Frob}_v = \sigma_{q,v} \) as matrices in \( \operatorname{GL}_2(\mathbb{Z}/n\mathbb{Z}) \).

To compute \( r(n, v) \), the number of \( \mathbb{F}_q \)-roots of the \( n \)-th division polynomial \( \Lambda_n(x, v) \) of the elliptic curve \( E_v \), it suffices to count the points \( P \in E_v[n] \) such that \( \sigma_{q,v}(P) = \pm P \). Thus, the arithmetic of \( E_v[n] \) is controlled by the conjugacy class of \( \sigma_{q,v} \). We will show that only finitely many conjugacy classes of the matrix \( \sigma_{q,v} \) contribute to non-zero \( r(n, v) \). For each such conjugacy class, we will use the Chebotarev density theorem to count the number of degree 1 places \( v \) such that \( \text{Frob}_v = \sigma_{q,v} \) is in that class. %TODO, change the sentence We will show that only finitely many conjugacy classes of the matrix \( \sigma_{q,v} \) contribute to non-zero \( r(n, v) \)

\subsection{The Galois group of the $n$-torsion point field}
In this section, we investigate the size of the Galois group \( \Gal(K(E[n])/K) \) as a subgroup of \( \text{GL}_2(\mathbb{Z}/n\mathbb{Z}) \). Our main goal is to prove that if \( K = \mathbb{F}_q(\lambda) \) and $E: y^2 = x(x-1)(x-\lambda)$ is the Legendre curve, then the geometric Galois group of \( K(E[n])/K \) is isomorphic to \( \operatorname{SL}_2(\mathbb{Z}/n\mathbb{Z}) \) for all odd integers \( n \) coprime to \( q \).

\subsubsection{The Galois group of $K(E[l])/K$}

We begin by recalling some classical results in the number field setting and then explain their function field analogues.

If \( K \) is a number field and \( E \) is an elliptic curve over \( K \) without complex multiplication, a theorem of Serre shows that the natural representation
\[
\rho_{E, l}: \Gal(K(E[l])/K) \hookrightarrow \operatorname{GL}_2(\mathbb{Z}/l\mathbb{Z})
\]
is surjective for all but finitely many primes \( l \). (See \cite[Section 4.2, Theorem 2]{Serre}.) The primes for which surjectivity fails are called \textbf{exceptional primes}. Let \( c(E, K) \) denote the smallest integer such that for all primes \( l \ge c(E, K) \), \( l \) is not exceptional. Mazur showed that for a semi-stable elliptic curve over \( \mathbb{Q} \), we have \( c(E, K) \le 11 \) \cite{Mazur}, and Duke proved that almost all elliptic curves over \( \mathbb{Q} \) have no exceptional primes \cite{Duke}.

In the function field case the situation is different. In particular, the sub-extension \( K(\mu_n)/K \) may not have degree \( \varphi(n) \), so we expect the full Galois group \( \Gal(K(E[n])/K) \) to be smaller. To be precise, let \( K = \mathbb{F}_q(C) \) be the function field of a smooth curve \( C \) over \( \mathbb{F}_q \). We say that an elliptic curve \( E \) over \( K \) is \textbf{non-isotrivial} if its \( j \)-invariant is transcendental over \( \mathbb{F}_q \). In this case, we have \( \text{End}_K(E) = \text{End}_{\overline{K}}(E) \simeq \mathbb{Z} \), which is the function field analogue of having no complex multiplication.

We call $F = K(E[n])\cap \overline{\mathbb{F}_q}$ as the \textbf{field of constants} of $K(E[n])$, and the Galois group of $K(E[n])$ over $F(\mathcal{C})$ as the \textbf{geometric Galois group} of $K(E[n])$ over $K$. We will show later that the field of constants of $K(E[n])$ is $\mathbb{F}_q(\mu_n)$. 

There is a short exact sequence
\[
1 \to \Gal(K(E[n])/K(\mu_n)) \to \Gal(K(E[n])/K) \xrightarrow{\pi} \Gal(K(\mu_n)/K) \to 1.
\]
When \( n \) is coprime to \( q \), we have 
\[
\Gal(K(\mu_n)/K) \simeq \langle q \rangle \subset (\mathbb{Z}/n\mathbb{Z})^\times.
\]
By viewing \( \Gal(K(E[n])/K) \) as a subgroup of \( \operatorname{GL}_2(\mathbb{Z}/n\mathbb{Z}) \) and identifying \( \pi \) with the determinant map, it follows that \( \Gal(K(E[n])/K(\mu_n)) \) is a subgroup of \( \operatorname{SL}_2(\mathbb{Z}/n\mathbb{Z}) \).

We now state Igusa's theorem, which provides the function field analogue of Serre's result. We quote the following statement from \cite[Theorem 3.12]{Igusa}, which is an intermediate step toward the full statement of Igusa's theorem.

\begin{thm}(Igusa) \par
    Let \( K \) be a function field and let \( E \) be a non-isotrivial elliptic curve over \( K \). Then, for all but finitely many primes \( l \) coprime to \( q \), we have 
    \[
    \Gal(K(E[l])/K) = \Gamma_l,
    \]
    where \( \Gamma_l \subset \operatorname{GL}_2(\mathbb{Z}/l\mathbb{Z}) \) is defined by the short exact sequence
    \[
    1 \to \operatorname{SL}_2(\mathbb{Z}/l\mathbb{Z}) \to \Gamma_l \xrightarrow{\det} \langle q \rangle \to 1.
    \]
    More explicitly, 
    \[
    \Gamma_l = \{ \sigma \in \operatorname{GL}_2(\mathbb{Z}/l\mathbb{Z}) : \det \sigma \equiv q^i \pmod{l} \text{ for some } i \}.
    \]
\end{thm}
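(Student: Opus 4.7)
The plan is to establish the containment $\Gal(K(E[l])/K) \subseteq \Gamma_l$ for every prime $l$ coprime to $q$, and then to prove the reverse inclusion for all but finitely many such $l$. The first containment is immediate from the Weil pairing lemma above: for any $\sigma \in \Gal(K(E[l])/K)$, the action on $\mu_l$ is by $\zeta \mapsto \zeta^{\det \sigma}$, and since $K = \mathbb{F}_q(C)$ has constant field $\mathbb{F}_q$, the cyclotomic extension $K(\mu_l)/K$ is a constant-field extension whose Galois group is cyclic, generated by $q \bmod l$. Hence $\det \rho_{E,l}(\sigma) \in \langle q \rangle \subseteq (\mathbb{Z}/l\mathbb{Z})^\times$, as required.

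For the reverse inclusion it suffices to show that the geometric Galois group $G_l := \Gal(K(E[l])/K(\mu_l))$ equals $\operatorname{SL}_2(\mathbb{F}_l)$ for almost all primes $l$, since combined with the surjectivity of $\det$ onto $\langle q \rangle$ this recovers all of $\Gamma_l$. I would invoke Dickson's classification of maximal subgroups of $\operatorname{SL}_2(\mathbb{F}_l)$: Borel subgroups, normalizers of a split Cartan, normalizers of a non-split Cartan, and the exceptional subgroups $A_4$, $S_4$, $A_5$ whose orders are independent of $l$. The key geometric input comes from the non-isotriviality assumption: since $j_E \in K$ is transcendental over $\mathbb{F}_q$ it must have a pole at some place $v$ of $C$, and at $v$ the curve $E$ acquires multiplicative reduction. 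Tate's uniformization then shows that inertia at $v$ acts on $T_l(E)$ through a non-trivial unipotent, which projects to a transvection in $G_l$. A subgroup of $\operatorname{SL}_2(\mathbb{F}_l)$ containing a transvection cannot lie inside a normalizer of a Cartan (whose only unipotent element is the identity) nor inside any of the exceptional subgroups once $l$ is larger than a universal constant.

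The main obstacle is the Borel case, since a Borel subgroup does contain transvections and so cannot be excluded by the inertia argument alone. Being contained in a Borel would mean $E$ admits a $K$-rational cyclic subgroup of order $l$, equivalently a non-constant morphism from $C$ to the modular curve $X_0(l)$ defined over a fixed finite extension of $\mathbb{F}_q$. Because the genus of $X_0(l)$ grows linearly in $l$ while $C$ is fixed, the function-field analogue of de Franchis' theorem (or, more concretely, an explicit bound on the degree of $j_E : C \to \mathbb{P}^1$ in terms of the degree required to factor through $X_0(l)$) forces this to occur for only finitely many $l$. Combining the Borel, Cartan, and exceptional cases produces a finite exceptional set of primes and completes the proof.
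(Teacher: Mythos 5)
The paper does not prove this result: it is explicitly quoted from Bandini--Longhi--Vigni (cited as \cite{Igusa} in the bibliography) as ``an intermediate step toward the full statement of Igusa's theorem,'' so there is no in-paper proof to compare against. Your sketch is a correct outline of the standard argument, and in fact it is essentially the method used by Igusa and in the cited reference: Weil pairing forces $\det$ to land in $\langle q\rangle$; Tate's uniformization at a pole of $j_E$ produces a transvection in the geometric monodromy $G_l$; Dickson's classification of subgroups of $\operatorname{SL}_2(\mathbb{F}_l)$ then reduces the problem to excluding the Borel, Cartan-normalizer, and exceptional cases; and the growth of $X_0(l)$ kills the Borel case. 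A couple of points you should flesh out to make this airtight. First, a pole of $j_E$ only guarantees \emph{potentially} multiplicative reduction; if the reduction at $v$ is additive, inertia acts through $\pm\bigl(\begin{smallmatrix}1&*\\0&1\end{smallmatrix}\bigr)$, and you should pass to a quadratic extension (or take the square of the inertia generator) to extract the unipotent element, noting that this element still lies in $G_l$ because $K(\mu_l)/K$ is a constant extension and hence everywhere unramified. Second, the unipotent element is nontrivial on $E[l]$ only when $l \nmid \operatorname{ord}_v(j_E)$; since $\operatorname{ord}_v(j_E)$ is a fixed negative integer, this excludes only finitely many additional $l$, which must be absorbed into the exceptional set. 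Finally, for the Borel case the cleanest argument is the degree bound rather than de~Franchis: if $G_l$ stabilizes a line, $j_E$ factors through the forgetful map $X_0(l)\to X(1)$ of degree $l+1$, forcing $l+1 \mid \deg(j_E)$ and hence $l \le \deg(j_E) - 1$. With these small repairs the proof is complete.
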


A theorem of Cojocaru and Hall \cite{Hall} provides an upper bound for the exceptional primes \( c(E, K) \) in the function field setting. In particular, we obtain
\[
c(E, K) \leq 2 + \max\left\{ l \text{ prime: } \frac{1}{12}\Bigl[l - (6 + 3e_2 + 4e_3)\Bigr] \leq \text{genus}(K) \right\},
\]
where \( e_2 = 1 \) if \( l \equiv 1 \pmod{4} \) and \( e_2 = -1 \) otherwise, and \( e_3 = 1 \) if \( l \equiv 1 \pmod{3} \) and \( e_3 = -1 \) otherwise.

To address the Poncelet \( n \)-gon problem, Theorem \ref{thm:3.2} suggests that we focus on the Legendre curve
\[
E: y^2 = x(x-1)(x-\lambda)
\]
defined over the function field \( K = \mathbb{F}_q(\lambda) \). We begin by quoting a result of Yu \cite[Theorem 5.1]{Hall2}, which shows that there are no exceptional primes in this case. Using this result, we then deduce that the geometric Galois group of \( K(E[n])/K \) is isomorphic to \( \operatorname{SL}_2(\mathbb{Z}/n\mathbb{Z}) \), with the field of constants of \( K(E[n]) \) being \( \mathbb{F}_q(\mu_n) \).

\begin{thm} (Yu) \label{thm:Yu}\par
    Let $l$ be an odd prime, and $E: y^2 = x(x-1)(x-\lambda)$ be the Legendre curve defined over $K = \mathbb{F}_q(\lambda)$. If $l$ is coprime to $\text{char } K$, then $K(E[l])/K(\mu_l)$ is a geometric extension with Galois group $\operatorname{SL}_2(\mathbb{Z}/l\mathbb{Z})$. As a corollary, we have $\Gal(K(E[l)/K) = \Gamma_l$. 
\end{thm}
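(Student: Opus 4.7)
The plan is to reduce Yu's theorem to showing that the geometric Galois group
$$G^{\mathrm{geom}} := \Gal(\overline{\mathbb{F}_q}K(E[l]) / \overline{\mathbb{F}_q}K)$$
is all of $\operatorname{SL}_2(\mathbb{Z}/l\mathbb{Z})$, and then to pass to the arithmetic statement via the cyclotomic extension. The Weil pairing lemma above already forces the image of $\rho_{E,l}$ to lie in $\Gamma_l$, so automatically $G^{\mathrm{geom}} \subseteq \operatorname{SL}_2(\mathbb{Z}/l\mathbb{Z})$. Granted the reverse inclusion, the field of constants of $K(E[l])$ is forced to equal $\mathbb{F}_q(\mu_l)$, and the short exact sequence
$$1 \to G^{\mathrm{geom}} \to \Gal(K(E[l])/K) \xrightarrow{\det} \Gal(\mathbb{F}_q(\mu_l)/\mathbb{F}_q) \to 1$$
combined with $\Gal(\mathbb{F}_q(\mu_l)/\mathbb{F}_q) = \langle q\rangle \subset (\mathbb{Z}/l\mathbb{Z})^\times$ identifies $\Gal(K(E[l])/K)$ with $\Gamma_l$.

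To compute $G^{\mathrm{geom}}$ I would inspect the local inertia at the places of bad reduction. The discriminant of $y^2 = x(x-1)(x-\lambda)$ is a nonzero scalar multiple of $\lambda^2(\lambda-1)^2$, so $E$ has split multiplicative reduction of Kodaira type $I_2$ at each of $\lambda = 0, 1, \infty$. By the theory of the Tate curve, at a place of type $I_n$ a topological generator of the tame inertia acts on $E[l]$ by the transvection $\begin{pmatrix} 1 & n \\ 0 & 1 \end{pmatrix}$ in a suitable basis (with fixed line the kernel of reduction). Here $n = 2$, which is a unit mod $l$ because $l$ is odd, so each of the three inertia generators contributes a non-trivial transvection to $G^{\mathrm{geom}}$.

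To conclude that these transvections already generate $\operatorname{SL}_2(\mathbb{Z}/l\mathbb{Z})$, I expect the cleanest route is Grothendieck's specialization theorem: the prime-to-$p$ étale fundamental group of $\mathbb{P}^1_{\overline{\mathbb{F}_q}}\setminus\{0,1,\infty\}$ is isomorphic to the prime-to-$p$ completion of $\pi_1^{\mathrm{top}}(\mathbb{P}^1_{\mathbb{C}}\setminus\{0,1,\infty\})$, which is the profinite completion of the free group on two generators. Under this identification the monodromy of the Legendre family corresponds to the classical embedding $\Gamma(2) \hookrightarrow \operatorname{SL}_2(\mathbb{Z})$, topologically generated by $\begin{pmatrix} 1 & 2 \\ 0 & 1 \end{pmatrix}$ and $\begin{pmatrix} 1 & 0 \\ 2 & 1 \end{pmatrix}$. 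Their reductions mod $l$ are non-commuting transvections (using $2 \in (\mathbb{Z}/l\mathbb{Z})^\times$), and a standard lemma says two such transvections generate the full $\operatorname{SL}_2(\mathbb{Z}/l\mathbb{Z})$ over a prime field.

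The main obstacle is the transition from the characteristic-zero monodromy to the positive-characteristic geometric Galois group, which is where the specialization machinery does the real work. A more elementary alternative would require verifying by hand, via the Tate uniformizations at $\lambda = 0$ and $\lambda = 1$, that the corresponding inertia transvections have distinct fixed lines in a common basis of $E[l]$, so that the pair generates $\operatorname{SL}_2(\mathbb{Z}/l\mathbb{Z})$ directly; this is possible but requires careful bookkeeping of how the Tate parameters at the two cusps relate. Once $G^{\mathrm{geom}}$ is identified, the Weil pairing step and the short exact sequence assembling the arithmetic Galois group are routine.
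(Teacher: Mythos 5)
The paper does not prove Theorem~\ref{thm:Yu}; it is quoted verbatim from the literature (attributed to J.-K.~Yu, with Hall's \emph{Big symplectic or orthogonal monodromy modulo $l$} given as the reference). So there is no ``paper's own proof'' to compare against, and what you have written is an attempt to supply a proof for a cited result.

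Your outline is essentially the standard argument and is correct in spirit. The pieces that are in good shape: the Weil pairing gives $G^{\mathrm{geom}}\subseteq\operatorname{SL}_2(\mathbb{Z}/l\mathbb{Z})$; the Legendre curve has Kodaira type $I_2$ at $\lambda=0,1,\infty$ (at $\infty$ this is most cleanly seen from the order-$2$ pole of $j(\lambda)=256(\lambda^2-\lambda+1)^3/\lambda^2(\lambda-1)^2$ rather than from the discriminant in the $\lambda$-coordinate); by Tate uniformization the tame inertia at each of these places acts on $E[l]$ as the transvection $\left(\begin{smallmatrix}1&2\\0&1\end{smallmatrix}\right)$, which is nontrivial of order $l$ because $\gcd(2,l)=1$; and once $G^{\mathrm{geom}}=\operatorname{SL}_2(\mathbb{Z}/l\mathbb{Z})$ is established, the determinant/Weil-pairing sequence forces $\Gal(K(E[l])/K)=\Gamma_l$ and identifies the constant field as $\mathbb{F}_q(\mu_l)$.

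Two points deserve to be made more explicit. First, for either of your two finishing routes you implicitly use that the cover $\overline{\mathbb{F}_q}K(E[l])/\overline{\mathbb{F}_q}K$ is \emph{tame}: this holds because the local inertia at each bad place has order $l$, prime to $p$, and there is no other ramification. Tameness is exactly what lets you invoke Grothendieck's specialization isomorphism for the prime-to-$p$ fundamental group, and (in the ``by hand'' alternative) it is what guarantees that $\pi_1^{\mathrm{tame}}(\mathbb{P}^1_{\overline{\mathbb{F}_q}}\setminus\{0,1,\infty\})$ is topologically generated by the inertia subgroups at $0,1,\infty$, so the image is generated by the three transvections. Second, in the ``distinct fixed lines'' alternative, after observing that the inertia transvections at $0$ and $1$ are either commuting or generate all of $\operatorname{SL}_2(\mathbb{Z}/l\mathbb{Z})$, you still owe an argument for why their fixed lines differ: if they coincided, the whole geometric monodromy would be contained in a Borel, giving a $\Gal(\overline{K}/\overline{\mathbb{F}_q}K)$-stable order-$l$ cyclic subgroup of $E[l]$ and hence a nonconstant map from $\mathbb{P}^1\setminus\{0,1,\infty\}$ to $Y_0(l)$ lying over the degree-$6$ $j$-map; ruling this out for all odd $l$ is not immediate and is precisely the kind of bookkeeping that makes the specialization route cleaner. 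With these caveats spelled out, your sketch is a correct reconstruction of the argument behind the theorem that the paper only cites.
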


\subsubsection{The Galois group of \( K(E[l^\infty])/K \)}
Next, we show that the Galois group of \( K(E[l^\infty])/K(\mu_{l^\infty}) \) is \( \operatorname{SL}_2(\mathbb{Z}_l) \). This, in turn, implies that 
\[
\Gal\bigl(K(E[l^k])/K(\mu_{l^k})\bigr) \simeq \operatorname{SL}_2(\mathbb{Z}/l^k\mathbb{Z})
\]
for all \( k \ge 1 \). We begin with the following algebraic lemma (see \cite[Chapter IV, Section 3.4, Lemma 3]{Serre 2}):

\begin{lem}
    Let \( l \ge 5 \) be a prime number, and let \( H \) be a closed subgroup of \( \operatorname{SL}_2(\mathbb{Z}_l) \) that surjects onto \( \operatorname{SL}_2(\mathbb{Z}/l\mathbb{Z}) \). Then, \( H = \operatorname{SL}_2(\mathbb{Z}_l) \).
\end{lem}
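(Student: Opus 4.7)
The plan is to reduce to finite level and induct on $k$. Since $H$ is closed in $\operatorname{SL}_2(\mathbb{Z}_l) = \varprojlim_k \operatorname{SL}_2(\mathbb{Z}/l^k\mathbb{Z})$, it suffices to show for each $k \geq 1$ that the image of $H$ in $\operatorname{SL}_2(\mathbb{Z}/l^k\mathbb{Z})$ is all of $\operatorname{SL}_2(\mathbb{Z}/l^k\mathbb{Z})$; the case $k=1$ is the hypothesis.

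Let $U_i = \ker\bigl(\operatorname{SL}_2(\mathbb{Z}_l) \to \operatorname{SL}_2(\mathbb{Z}/l^i\mathbb{Z})\bigr)$ for $i \geq 1$. Two standard facts about this filtration drive the proof. First, the map $I + l^i M \mapsto M \bmod l$ identifies $U_i/U_{i+1}$ with the additive group $\mathfrak{sl}_2(\mathbb{F}_l)$ of trace-zero $2 \times 2$ matrices, and the conjugation action of $\operatorname{SL}_2(\mathbb{Z}_l)$ on this quotient factors through the adjoint representation of $\operatorname{SL}_2(\mathbb{F}_l)$. Second, a direct expansion gives $[U_i, U_j] \subseteq U_{i+j}$, with the induced pairing $U_1/U_2 \otimes U_i/U_{i+1} \to U_{i+1}/U_{i+2}$ equal to the Lie bracket on $\mathfrak{sl}_2(\mathbb{F}_l)$. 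Under the inductive hypothesis that $H$ surjects modulo $l^k$, the inductive step reduces to showing that the image $V_k$ of $H \cap U_k$ in $U_k/U_{k+1}$ is all of $\mathfrak{sl}_2(\mathbb{F}_l)$. Since $V_k$ is stable under the adjoint action of $\operatorname{SL}_2(\mathbb{F}_l)$ and that action is irreducible on $\mathfrak{sl}_2(\mathbb{F}_l)$ for $l$ odd, $V_k$ is either $0$ or $\mathfrak{sl}_2(\mathbb{F}_l)$.

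Once $V_1 = \mathfrak{sl}_2(\mathbb{F}_l)$ is established, the cases $k \geq 2$ follow by a bracket bootstrap: given $M \in \mathfrak{sl}_2(\mathbb{F}_l)$ and $N \in V_k$, lift them to $H \cap U_1$ and $H \cap U_k$ respectively, so the commutator in $H$ lies in $H \cap U_{k+1}$ with class $[M, N]$ in $U_{k+1}/U_{k+2}$ by the pairing above. Since $\mathfrak{sl}_2(\mathbb{F}_l)$ is a perfect Lie algebra for $l \geq 5$, ranging $M$ over all of $\mathfrak{sl}_2(\mathbb{F}_l)$ produces commutators whose classes span $\mathfrak{sl}_2(\mathbb{F}_l)$, so $V_{k+1}$ is nonzero and hence equal to $\mathfrak{sl}_2(\mathbb{F}_l)$ by irreducibility. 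Thus the hard step is the base case $k=1$.

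The main obstacle is therefore showing $V_1 \neq 0$, which I would handle by contradiction. If $V_1 = 0$ then $H \cap U_1 \subseteq U_2$, so the image $\bar H$ of $H$ in $\operatorname{SL}_2(\mathbb{Z}/l^2\mathbb{Z})$ maps isomorphically onto $\operatorname{SL}_2(\mathbb{F}_l)$ under reduction, providing a group-theoretic splitting of the extension $1 \to \mathfrak{sl}_2(\mathbb{F}_l) \to \operatorname{SL}_2(\mathbb{Z}/l^2\mathbb{Z}) \to \operatorname{SL}_2(\mathbb{F}_l) \to 1$. However, this extension admits no splitting for $l$ odd: any lift to $\operatorname{SL}_2(\mathbb{Z}/l^2\mathbb{Z})$ of the order-$l$ unipotent element $\begin{pmatrix} 1 & 1 \\ 0 & 1 \end{pmatrix} \in \operatorname{SL}_2(\mathbb{F}_l)$ has order $l^2$, as one verifies by computing its $l$-th power modulo $l^2$ and noting that the cross terms carry a factor $\binom{l}{2} \equiv 0 \pmod{l}$ for odd $l$, so the $l$-th power reduces to $I + l \begin{pmatrix} 0 & 1 \\ 0 & 0 \end{pmatrix} \neq I$. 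This contradiction completes the induction and yields $H = \operatorname{SL}_2(\mathbb{Z}_l)$.
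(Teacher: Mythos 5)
The paper does not reproduce a proof of this lemma; it cites Serre, \emph{Abelian $l$-adic representations and elliptic curves}, Ch.~IV, \S3.4, Lemma 3. Your argument is essentially the standard one given there, and it is correct: reduce to showing the image of $H$ is full at every finite level; use the filtration by the principal congruence subgroups $U_i$ with $U_i/U_{i+1} \cong \mathfrak{sl}_2(\mathbb{F}_l)$ and $[U_i,U_j]\subseteq U_{i+j}$; observe that each $V_k$ is an $\operatorname{SL}_2(\mathbb{F}_l)$-submodule of $\mathfrak{sl}_2(\mathbb{F}_l)$ and hence is $0$ or everything by irreducibility of the adjoint; rule out $V_1 = 0$ by non-splitting of $1 \to \mathfrak{sl}_2(\mathbb{F}_l) \to \operatorname{SL}_2(\mathbb{Z}/l^2\mathbb{Z}) \to \operatorname{SL}_2(\mathbb{F}_l) \to 1$; and bootstrap to higher $k$ with the Lie-bracket pairing $U_1/U_2 \times U_k/U_{k+1} \to U_{k+1}/U_{k+2}$.

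One small imprecision is worth flagging, because it is exactly where $l \ge 5$ enters. In the base case you compute $(u + lM)^l \bmod l^2$ for a general lift $g = u + lM$ of $u = \left(\begin{smallmatrix}1&1\\0&1\end{smallmatrix}\right)$ and attribute the vanishing of the cross terms to $\binom{l}{2}\equiv 0 \pmod l$. Writing $N$ for the nilpotent part of $u$, the linear-in-$M$ part of the noncommutative expansion is
\[
l\sum_{i=0}^{l-1}u^i M u^{l-1-i} = l\Bigl(lM + \tbinom{l}{2}(NM+MN) + \bigl(\textstyle\sum_{i=0}^{l-1}i(l-1-i)\bigr)NMN\Bigr),
\]
and $\sum_{i=0}^{l-1} i(l-1-i) \equiv -\sum_{i=0}^{l-1} i^2 \pmod l$ is \emph{not} controlled by $\binom{l}{2}$. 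That sum vanishes mod $l$ precisely when $l \ge 5$; for $l = 3$ it equals $1$, so one can choose $M$ making the lift of $u$ have order $3$, and the non-splitting argument breaks down. Spelling this out shows that your hypothesis $l\ge 5$ is not decorative but is doing the work in the base step, while the irreducibility and perfectness facts you invoke already hold for all $l\ge 3$.
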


\begin{cor}\label{cor: l^k_galois_group}
    Let \( K = \mathbb{F}_q(\lambda) \) and $E: y^2 = x(x-1)(x-\lambda)$
    be the Legendre curve over \( K \). If \( l \) is an odd prime coprime to \( q \), then the Galois group of \( K(E[l^\infty])/K(\mu_{l^\infty}) \) is $\operatorname{SL}_2(\mathbb{Z}_l)$.
\end{cor}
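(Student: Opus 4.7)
The plan is to apply the algebraic lemma of Serre quoted just above: we would identify $H := \Gal(K(E[l^\infty])/K(\mu_{l^\infty}))$ with a closed subgroup of $\operatorname{SL}_2(\mathbb{Z}_l)$, show it surjects onto $\operatorname{SL}_2(\mathbb{Z}/l\mathbb{Z})$ modulo $l$, and then conclude $H = \operatorname{SL}_2(\mathbb{Z}_l)$.

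For the containment in $\operatorname{SL}_2(\mathbb{Z}_l)$, we use the $l$-adic representation $\hat\rho_{E,l}$ introduced in the previous subsection. By the earlier Weil-pairing lemma, the composition $\det \circ \hat\rho_{E,l}$ agrees with the cyclotomic character on $\Gal(K(E[l^\infty])/K)$, which is by construction trivial on the subgroup fixing $K(\mu_{l^\infty})$. Thus $\hat\rho_{E,l}(H) \subseteq \operatorname{SL}_2(\mathbb{Z}_l)$, and as the Galois group of a profinite Galois extension $H$ is automatically closed in the $l$-adic topology.

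For the mod-$l$ surjectivity, the natural restriction $H \twoheadrightarrow \Gal(K(E[l])/K(\mu_l))$ corresponds, via the compatibility $\rho_{E,l} \equiv \hat\rho_{E,l} \pmod l$, to the reduction-mod-$l$ map from $\operatorname{SL}_2(\mathbb{Z}_l)$ to $\operatorname{SL}_2(\mathbb{Z}/l\mathbb{Z})$ restricted to $H$. Yu's theorem (Theorem \ref{thm:Yu}) identifies the target as the full $\operatorname{SL}_2(\mathbb{Z}/l\mathbb{Z})$. For $l \geq 5$ the preceding lemma then yields $H = \operatorname{SL}_2(\mathbb{Z}_l)$ immediately.

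The one obstacle is the case $l = 3$, where Serre's lifting lemma as stated does not apply because the simplicity of $\operatorname{PSL}_2(\mathbb{F}_l)$ that underlies its proof fails. I would handle this case separately, either by invoking the refinement of the lifting lemma that only requires surjectivity modulo $l^2$ (and then verifying surjectivity modulo $9$ for the Legendre curve by an explicit computation with the $9$-division polynomial), or by ruling out each of the finitely many open proper subgroups of $\operatorname{SL}_2(\mathbb{Z}_3)$ that surject onto $\operatorname{SL}_2(\mathbb{F}_3)$ using ramification information about $K(E[9])/K$ at the bad fibers $\lambda = 0, 1, \infty$. For $l \geq 5$ the corollary is essentially a formal consequence of Yu's theorem together with the Serre lifting lemma; the $l = 3$ verification is where the genuine work lies.
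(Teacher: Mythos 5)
Your argument for $l \geq 5$ is exactly the paper's: embed $H = \Gal(K(E[l^\infty])/K(\mu_{l^\infty}))$ as a closed subgroup of $\operatorname{SL}_2(\mathbb{Z}_l)$ via $\hat\rho_{E,l}$ and the Weil-pairing determinant identity, use Yu's theorem for mod-$l$ surjectivity, and apply Serre's lifting lemma. That part is fine and matches the paper.

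Where you diverge is $l = 3$, and you are right to flag it as the real content. You propose either (a) upgrading to surjectivity mod $9$ and invoking a refined lifting criterion, or (b) excluding the finitely many open proper subgroups of $\operatorname{SL}_2(\mathbb{Z}_3)$ that surject mod $3$ using ramification at $\lambda = 0, 1, \infty$. The paper does something in the spirit of (b) but more structural: it uses Tate's uniformization of $E$ at a place of split multiplicative reduction to show that the transvection $\bigl[\begin{smallmatrix}1 & 1 \\ 0 & 1\end{smallmatrix}\bigr]$ lies in the image of $\hat\rho_{E,3}$ restricted to $H$ (the inertia at a multiplicative fiber acts by a non-torsion unipotent on the Tate module because $E$ is non-isotrivial), then exhibits $\bigl[\begin{smallmatrix}0 & 1 \\ -1 & 0\end{smallmatrix}\bigr]$ in $H$ by a hands-on lifting argument, and conjugates to obtain the opposite transvection $\bigl[\begin{smallmatrix}1 & 0 \\ 1 & 1\end{smallmatrix}\bigr]$; these two transvections topologically generate $\operatorname{SL}_2(\mathbb{Z}_3)$. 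The advantage of the paper's route over your option (a) is that it avoids any computation with the $9$-division polynomial and instead uses the known monodromy of the Legendre family at its cusps; your option (b) is closest in flavor, since Tate uniformization is precisely the ramification input at a bad fiber, but the paper avoids cataloguing open subgroups by directly producing a generating set. Either of your proposed routes should work, but as written they are plans rather than completed arguments, and the $l=3$ verification is exactly the step that needs to be carried out.
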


\begin{proof}
    There is a natural surjection
    \[
    \Gal\bigl(K(E[l^\infty])/K(\mu_{l^\infty})\bigr) \to \Gal\bigl(K(E[l])/K(\mu_l)\bigr)
    \]
    induced by the reduction map \( \operatorname{SL}_2(\mathbb{Z}_l) \to \operatorname{SL}_2(\mathbb{Z}/l\mathbb{Z}) \). By Theorem \ref{thm:Yu}, we have $\Gal\bigl(K(E[l])/K(\mu_l)\bigr) \simeq \operatorname{SL}_2(\mathbb{Z}/l\mathbb{Z})$. For \( l \ge 5 \), it then follows from the above lemma that 
    \[
    \Gal\bigl(K(E[l^\infty])/K(\mu_{l^\infty})\bigr) = \operatorname{SL}_2(\mathbb{Z}_l).
    \]

    For \( l = 3 \), we proceed as follows. First, we show that
    \[
    \begin{bmatrix} 1 & 1 \\ 0 & 1 \end{bmatrix} \in \Gal\bigl(K(E[3^\infty])/K(\mu_{3^\infty})\bigr).
    \]
    This inclusion follows from Tate's uniformization of \( E \) at a place of split multiplicative reduction (see \cite[Corollary 3.7]{Igusa} for details). Next, we will show that $
    \begin{bmatrix} 0 & 1 \\ -1 & 0 \end{bmatrix} \in \Gal\bigl(K(E[3^\infty])/K(\mu_{3^\infty})\bigr)$, and this implies that 
    \[
    \begin{bmatrix} 1 & 0 \\ 1 & 1 \end{bmatrix} = \begin{bmatrix} 0 & 1 \\ -1 & 0 \end{bmatrix}
    \begin{bmatrix} 1 & 1 \\ 0 & 1 \end{bmatrix}^2
    \begin{bmatrix} 0 & -1 \\ 1 & 0 \end{bmatrix}
    \in \Gal\bigl(K(E[3^\infty])/K(\mu_{3^\infty})\bigr).\]
    Since the matrices \( \begin{bmatrix} 1 & 1 \\ 0 & 1 \end{bmatrix} \) and \( \begin{bmatrix} 1 & 0 \\ 1 & 1 \end{bmatrix} \) generate \( \operatorname{SL}_2(\mathbb{Z}_l) \) (see \cite[Chapter XIII, Lemma 8.1]{Lang}), it follows that $
    \Gal\bigl(K(E[3^\infty])/K(\mu_{3^\infty})\bigr) = \operatorname{SL}_2(\mathbb{Z}_3)$. 

    Now, we show that $
    \begin{bmatrix} 0 & 1 \\ -1 & 0 \end{bmatrix} \in \Gal\bigl(K(E[3^\infty])/K(\mu_{3^\infty})\bigr)$. It suffices to show that for every $n$, there is $\sigma\in \Gal\bigl(K(E[3^\infty])/K(\mu_{3^\infty})\bigr)$ such that $\sigma\equiv \begin{bmatrix}
        0 & 1 \\ -1 & 0 
    \end{bmatrix}\pmod{3^n}$. Since $\Gal\bigl(K(E[3])/K(\mu_3)\bigr) = \text{SL}_2(\mathbb{Z}/3\mathbb{Z})$, we can find $\sigma\in \Gal\bigl(K(E[3^\infty])/K(\mu_{3^\infty})\bigr)$ whose reduction modulo $3$ is $\begin{bmatrix} 0 & 1 \\ -1 & 0\end{bmatrix}$. Suppose that for some \( n \ge 1 \), there exists \( \sigma \in \Gal(K(E[3^\infty])/K(\mu_{3^\infty})) \) such that $\sigma \equiv \begin{bmatrix} 0 & -1 \\ 1 & 0 \end{bmatrix} \pmod{3^n}$.
    We write $\sigma = \begin{bmatrix} 0 & -1 \\ 1 & 0 \end{bmatrix} + 3^n y$,
    for some \( y \in \operatorname{M}_2(\mathbb{Z}_3) \). Then, we compute that 
    $$\sigma^3 = \begin{bmatrix}
        0 & 1 \\ -1 &0
    \end{bmatrix} + 3^{n+1}y^3 \equiv \begin{bmatrix}
        0 & 1 \\ -1 &0
\end{bmatrix} \pmod{3^{n+1}}.$$
    Thus, we have  \( \sigma^3 \in\Gal(K(E[3^\infty])/K(\mu_{3^\infty})) \), whose reduction modulo \( 3^{n+1} \) is \( \begin{bmatrix} 0 & 1 \\ -1 & 0 \end{bmatrix} \). This completes the induction and hence the proof for the \( l = 3 \) case.

    % OLD HUMAN WRITING
    % To show that $\begin{bmatrix} 1 & 1 \\ 0 & 1 \end{bmatrix} \in \Gal\bigl(K(E[3^\infty])/K(\mu_{3^\infty})\bigr)$. We proceed by induction on $n$ to show that there is $\sigma\in \Gal(K(E[3^\infty])/K(\mu_{3^\infty}))$ such that $\sigma\equiv \begin{bmatrix}
    %     0 & 1 \\ -1 & 0
    % \end{bmatrix} \mod 3^n$ for all $n$. Since $\operatorname{Gal}(K(E[3])/K(\mu_3)) = \operatorname{SL}_2(\mathbb{Z}/3\mathbb{Z})$, we can find $\sigma \in \Gal(K(E[3^\infty])/K(\mu_{3^\infty}))$ such that $\sigma \equiv \begin{bmatrix}
    %     0 & 1 \\ -1 & 0
    % \end{bmatrix} \mod 3$. Suppose that there is $\sigma \in \Gal(K(E[3^\infty])/K(\mu_{3^\infty}))$ such that $\sigma\equiv\begin{bmatrix}
    %     0 & 1 \\ -1 & 0
    % \end{bmatrix} \mod 3^n$. We write $\sigma = \begin{bmatrix}
    %     0 & -1 \\ 1 & 0
    % \end{bmatrix} + 3^ny$ for some $y\in \text{GL}_2(\mathbb{Z}_3)$. We compute that 
    
    % $$-\sigma^3 \equiv -\begin{bmatrix}
    %     0 & -1 \\ 1 & 0
    % \end{bmatrix}^3 - (l^ny)^{3} \equiv \begin{bmatrix}
    %     0 & -1 \\ 1 & 0
    % \end{bmatrix}  \mod l^{n+1}.$$
    % Therefore, we conclude that $\begin{bmatrix}
    %     0 & -1 \\ 1 & 0
    % \end{bmatrix}\in \Gal(K(E[l^\infty])/K(\mu_{l^\infty}))$, and $\Gal(K(E[l^\infty])/K(\mu_{l^\infty})) = \operatorname{SL}_2(\mathbb{Z}_l)$ for $l = 3$. 
\end{proof}

\subsubsection{The Galois group of \( K(E[n])/K \)}

\begin{thm}\label{thm: Galois_n_torsion}
    Let \( K = \mathbb{F}_q(\lambda) \) and $E: y^2 = x(x-1)(x-\lambda)$
    be the Legendre curve over \( K \). If \( n \) is an odd integer coprime to \( q \), then
    \[
    \Gal(K(E[n])/K(\mu_n)) = \operatorname{SL}_2(\mathbb{Z}/n\mathbb{Z}).
    \]
\end{thm}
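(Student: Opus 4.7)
The plan is to promote the prime-power result of Corollary \ref{cor: l^k_galois_group} to arbitrary odd $n$ coprime to $q$ by combining the Chinese Remainder Theorem with a Goursat-style linear disjointness argument. I would factor $n = \prod_i l_i^{k_i}$ into distinct odd prime powers; the Galois-equivariant decomposition $E[n] \cong \bigoplus_i E[l_i^{k_i}]$ induces the isomorphism $\operatorname{SL}_2(\mathbb{Z}/n\mathbb{Z}) \cong \prod_i \operatorname{SL}_2(\mathbb{Z}/l_i^{k_i}\mathbb{Z})$, the compositum identity $K(E[n]) = \prod_i K(E[l_i^{k_i}])$, and $K(\mu_n) = \prod_i K(\mu_{l_i^{k_i}})$. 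The Weil-pairing lemma proved earlier already forces $G := \Gal(K(E[n])/K(\mu_n))$ to sit inside $\prod_i \operatorname{SL}_2(\mathbb{Z}/l_i^{k_i}\mathbb{Z})$, and the task is to show this inclusion is an equality.

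First I would verify that $G$ surjects onto each factor $\operatorname{SL}_2(\mathbb{Z}/l_i^{k_i}\mathbb{Z}) = \Gal(K(E[l_i^{k_i}])/K(\mu_{l_i^{k_i}}))$. The restriction map is surjective precisely when $K(\mu_n) \cap K(E[l_i^{k_i}]) = K(\mu_{l_i^{k_i}})$. Since $K(\mu_n)/K(\mu_{l_i^{k_i}})$ is a constant-field extension (generated by the roots of unity of order $n/l_i^{k_i}$), the intersection must lie inside the field of constants of $K(E[l_i^{k_i}])$. Combining the geometric conclusion of Theorem \ref{thm:Yu} with the tower argument of Corollary \ref{cor: l^k_galois_group} shows that this field of constants is $\mathbb{F}_q(\mu_{l_i^{k_i}})$; the coprimality $\gcd(n/l_i^{k_i}, l_i^{k_i}) = 1$ then forces the intersection to equal $K(\mu_{l_i^{k_i}})$.

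The second step is a Goursat-type argument, performed inductively on the number of prime divisors of $n$. The key algebraic input is that for distinct odd primes $l \neq l'$, the groups $\operatorname{SL}_2(\mathbb{Z}/l^a\mathbb{Z})$ and $\operatorname{SL}_2(\mathbb{Z}/(l')^{b}\mathbb{Z})$ share no nontrivial common quotient: their simple composition factors are disjoint apart from a potential $\mathbb{Z}/2$, and this possibility is excluded because the center $\{\pm I\}$ of $\operatorname{SL}_2(\mathbb{Z}/l^k\mathbb{Z})$ lies in the commutator subgroup for every odd prime $l$ (for $l \geq 5$ because the whole group is perfect, and for $l = 3$ because $[\operatorname{SL}_2(\mathbb{F}_3), \operatorname{SL}_2(\mathbb{F}_3)] = Q_8$ already contains $-I$). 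Goursat's lemma therefore forces any subgroup of $\prod_i \operatorname{SL}_2(\mathbb{Z}/l_i^{k_i}\mathbb{Z})$ that surjects onto every factor to be the full product.

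The main technical obstacle I anticipate lies at the prime $l_i = 3$, since $\operatorname{SL}_2(\mathbb{F}_3)$ is not perfect and has abelianization $\mathbb{Z}/3$. This yields an extra cyclic degree-$3$ sub-extension inside $K(E[3^k])/K(\mu_{3^k})$ that must be tracked carefully in both steps: one must verify it is geometric rather than constant-field (so it does not accidentally land in $K(\mu_n)$ during Step 1), and one must verify it is not duplicated by a quotient coming from a factor at another prime (which is automatic in Step 2 because $\operatorname{SL}_2(\mathbb{Z}/l_j^{k_j}\mathbb{Z})$ admits no $\mathbb{Z}/3$ quotient when $l_j \geq 5$). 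Everything else in the argument is essentially formal.
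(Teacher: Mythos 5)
Your proposal is correct and follows the same overall two-part strategy as the paper: reduce to prime-power torsion via the earlier corollary, then show the prime-power torsion fields are linearly disjoint over the cyclotomic base. The differences are in execution, and each version has something to recommend it. For the first step, the paper applies Corollary \ref{cor: l^k_galois_group} with the base field taken to be $K(\mu_n) = \mathbb{F}_{q^m}(\lambda)$ directly, which immediately yields $\Gal(K(\mu_n, E[l_i^{a_i}])/K(\mu_n)) \simeq \operatorname{SL}_2(\mathbb{Z}/l_i^{a_i}\mathbb{Z})$ and quietly absorbs your constant-field-intersection argument into a single re-application of the corollary; your version, which shows $K(\mu_n)\cap K(E[l_i^{k_i}]) = K(\mu_{l_i^{k_i}})$ by hand using the identified field of constants $\mathbb{F}_q(\mu_{l_i^{k_i}})$, reaches the same conclusion but is a bit more laborious. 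For the second step, the paper cites Adelmann's explicit tables of normal subgroups of $\operatorname{SL}_2(\mathbb{Z}/l^m\mathbb{Z})$ and observes that nontrivial quotients for distinct primes have distinct cardinalities, while you instead run a self-contained Goursat argument through composition factors: the only possible shared simple factor is $\mathbb{Z}/2$, and you correctly rule this out by noting $-I$ lies in the commutator subgroup for every odd $l$ (perfection for $l\ge 5$; $Q_8 = [\operatorname{SL}_2(\mathbb{F}_3),\operatorname{SL}_2(\mathbb{F}_3)] \ni -I$ for $l = 3$). Your Goursat route is genuinely more elementary and avoids an external reference; the paper's cardinality citation is shorter but less transparent. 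Your explicit flagging of the $\mathbb{Z}/3$ abelianization at $l=3$ is a reasonable point of caution, though it is in fact harmless, exactly as you conclude, since no $\operatorname{SL}_2(\mathbb{Z}/l_j^{k_j}\mathbb{Z})$ with $l_j\ne 3$ admits a $\mathbb{Z}/3$ quotient.
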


\begin{proof}
    Let $n = \prod_{1}^k l_i^{a_i}$ be the prime factorization of $n$. Since \( K(\mu_n) = \mathbb{F}_{q^m}(\lambda) \) for some \( m \), Corollary \ref{cor: l^k_galois_group} applies with $K$ replaced by $K(\mu_n)$, and we have
    \[
    \Gal\bigl(K(\mu_n, E[l_i^{a_i}])/K(\mu_n)\bigr) \simeq \operatorname{SL}_2(\mathbb{Z}/l_i^{a_i}\mathbb{Z})
    \]
    for each prime factor \( l_i \). We now show that the fields \( K(\mu_n, E[l_i^{a_i}]) \) have trivial pairwise intersection over \( K(\mu_n) \). Therefore,
    \[
    \Gal(K(E[n])/K(\mu_n)) \simeq \prod_{i=1}^k \Gal\bigl(K(\mu_n, E[l_i^{a_i}])/K(\mu_n)\bigr)
    \simeq \prod_{i=1}^k \operatorname{SL}_2(\mathbb{Z}/l_i^{a_i}\mathbb{Z})
    \simeq \operatorname{SL}_2(\mathbb{Z}/n\mathbb{Z}).
    \]
    For further details, we refer to \cite[Figures 5.1 and 5.2]{Adel}, which list all normal subgroups of \( \operatorname{SL}_2(\mathbb{Z}/l^m\mathbb{Z}) \). In those figures, we see that the nontrivial quotient groups of \( \operatorname{SL}_2(\mathbb{Z}/l^m\mathbb{Z}) \) have distinct cardinalities for different primes \( l \). This forces the pairwise intersections \( K(\mu_n, E[l_i^{a_i}]) \cap K(\mu_n, E[l_j^{a_j}]) \) (for \( i \neq j \)) to be trivial over \( K(\mu_n) \).

    % OLD Detail with those cardinality
    % We refer to \cite[Figure 5.1, 5.2]{Adel} for all normal subgroups of $\operatorname{SL}_2(\mathbb{Z}/l^m\mathbb{Z})$. We observe that the non-trivial quotient groups of $\operatorname{SL}_2(\mathbb{Z}/l^m\mathbb{Z})$ can only have size $\frac{l^{3k}-l^{3k-2}}{2}$, $l^{3k}-l^{3k-2}$ for $1\leq k\leq m$. For $l=3$, $\operatorname{SL}_2(\mathbb{Z}/l\mathbb{Z})$ has an additional quotient subgroup of size $3$. In all of the cases, we see that the quotient subgroup of $\operatorname{SL}_2(\mathbb{Z}/l^m\mathbb{Z})$ has distinct cardinalities for different $l$. Therefore, the pairwise intersection of $K(\mu_n. E[l_i^{a_i}])$ is trivial over $K(\mu_n)$. 
\end{proof}

\begin{cor}
    With the same setting as above, the field of constants of \( K(E[n]) \) is \( \mathbb{F}_q(\mu_n) \). Thus, the geometric Galois group of \( K(E[n])/K \) is $
    \Gal(K(E[n])/K(\mu_n)) = \operatorname{SL}_2(\mathbb{Z}/n\mathbb{Z})$.
\end{cor}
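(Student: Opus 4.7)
The plan is to reduce both conclusions to the single claim that the field of constants $F := K(E[n]) \cap \overline{\mathbb{F}_q}$ equals $\mathbb{F}_q(\mu_n)$. The inclusion $\mathbb{F}_q(\mu_n) \subseteq F$ is immediate from the chain $\mathbb{F}_q(\mu_n) \subset K(\mu_n) \subseteq K(E[n])$. Once I establish the reverse inclusion, I will have $F(\lambda) = K(\mu_n)$, and then the geometric Galois group is
\[
\Gal(K(E[n])/F(\lambda)) = \Gal(K(E[n])/K(\mu_n)) = \operatorname{SL}_2(\mathbb{Z}/n\mathbb{Z})
\]
by Theorem \ref{thm: Galois_n_torsion}, yielding both assertions of the corollary.

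To prove $F \subseteq \mathbb{F}_q(\mu_n)$, I apply the Galois correspondence: the intermediate field $F(\lambda)$ of $K(E[n])/K(\mu_n)$ corresponds to a normal subgroup $N \subset \operatorname{SL}_2(\mathbb{Z}/n\mathbb{Z})$, and the quotient $\operatorname{SL}_2(\mathbb{Z}/n\mathbb{Z})/N \cong \Gal(F(\lambda)/K(\mu_n)) \cong \Gal(F/\mathbb{F}_q(\mu_n))$ must be cyclic, since the Galois group of any extension of finite fields is cyclic. The task therefore reduces to ruling out proper normal subgroups of $\operatorname{SL}_2(\mathbb{Z}/n\mathbb{Z})$ with cyclic quotient.

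By the Chinese remainder decomposition $\operatorname{SL}_2(\mathbb{Z}/n\mathbb{Z}) \cong \prod_{l \mid n} \operatorname{SL}_2(\mathbb{Z}/l^{a_l}\mathbb{Z})$, cyclic quotients come from cyclic quotients of the individual factors. From the tables of normal subgroups in Adel's thesis (already referenced in the proof of Theorem \ref{thm: Galois_n_torsion}), for primes $l \geq 5$ the factor $\operatorname{SL}_2(\mathbb{Z}/l^{a_l}\mathbb{Z})$ is perfect and so admits only the trivial cyclic quotient, while for $l = 3$ the factor $\operatorname{SL}_2(\mathbb{Z}/3^{a_3}\mathbb{Z})$ has a unique nontrivial cyclic quotient of order $3$, which factors through the reduction map $\operatorname{SL}_2(\mathbb{Z}/3^{a_3}\mathbb{Z}) \to \operatorname{SL}_2(\mathbb{Z}/3\mathbb{Z})$.

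The hard part will be ruling out this residual $\mathbb{Z}/3\mathbb{Z}$ possibility when $3 \mid n$. To handle it, I will appeal to Yu's theorem (Theorem \ref{thm:Yu}), which asserts that $K(E[3])/K(\mu_3)$ is itself a geometric extension. A standard constant-field-extension argument then shows that the compositum $K(\mu_n, E[3])/K(\mu_n)$ is geometric as well, so its field of constants is $\mathbb{F}_q(\mu_n)$. If a nontrivial $\mathbb{Z}/3\mathbb{Z}$ quotient of $\operatorname{SL}_2(\mathbb{Z}/n\mathbb{Z})$ corresponded to a genuine constant extension inside $K(E[n])$, then because the quotient factors through $\operatorname{SL}_2(\mathbb{Z}/3\mathbb{Z})$, the corresponding subfield would lie inside $K(\mu_n, E[3])$ and produce extra constants there, contradicting its geometricity. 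This will complete the reduction and give $F = \mathbb{F}_q(\mu_n)$.
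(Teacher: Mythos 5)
Your proof is correct, but it takes a genuinely different route from the paper's. The paper argues by a cardinality count: writing $K' = F(\lambda)$, it applies Theorem \ref{thm: Galois_n_torsion} a second time with $\mathbb{F}_q$ replaced by the finite field $F$ to get $[K'(E[n]) : K'(\mu_n)] = |\operatorname{SL}_2(\mathbb{Z}/n\mathbb{Z})|$, and then the equality $K'(E[n]) = K(E[n])$ together with the tower law forces $[K'(\mu_n) : K(\mu_n)] = 1$, i.e. $F = \mathbb{F}_q(\mu_n)$. Your argument instead goes through the Galois correspondence: $F(\lambda)$ is the fixed field of a normal subgroup $N$ of $\operatorname{SL}_2(\mathbb{Z}/n\mathbb{Z})$ with cyclic quotient, so $N \supseteq [\operatorname{SL}_2(\mathbb{Z}/n\mathbb{Z}),\operatorname{SL}_2(\mathbb{Z}/n\mathbb{Z})]$; perfectness of $\operatorname{SL}_2(\mathbb{Z}/l^a\mathbb{Z})$ for $l\ge 5$ reduces the abelianization to the $3$-part (for $n$ odd), and the possible $\mathbb{Z}/3\mathbb{Z}$ quotient, which factors through reduction to $\operatorname{SL}_2(\mathbb{Z}/3\mathbb{Z})$, is killed by Yu's theorem for $l=3$ together with the standard fact that constant-field extensions of a geometric extension remain geometric. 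Both routes are sound. The paper's is shorter and more uniform (no case analysis on $l$), but it silently relies on Theorem \ref{thm: Galois_n_torsion} holding over any finite base field $F$, not just $\mathbb{F}_q$, a point you sidestep. Your version is longer and leans on the structure theory of $\operatorname{SL}_2(\mathbb{Z}/l^a\mathbb{Z})$, but it pinpoints the only place a hidden constant extension could live — the $\mathbb{Z}/3\mathbb{Z}$ abelianization of the $3$-part — and shows concretely why Yu's theorem rules it out.
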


%TODO, I think this proof is still not well-written
\begin{proof}
    Let $F = \overline{\mathbb{F}_q} \cap K(E[n])$ be the field of constants of \( K(E[n]) \). By Weil's pairing, we have \( \mu_n \subset K(E[n])\) and thus \(\mu_n\subset F \).
    Let \( K' = F(\lambda) \). Since \( K(\mu_n) \subset K' \) and \( K'(E[n]) = K(E[n]) \), we have
    \[
    |\operatorname{SL}_2(\mathbb{Z}/n\mathbb{Z})| = [K(E[n]):K(\mu_n)] = [K'(E[n]):K(\mu_n)] = [K'(E[n]):K'(\mu_n)] \cdot [K'(\mu_n):K(\mu_n)].
    \]
    But \( \Gal(K'(E[n])/K'(\mu_n)) \simeq \Gal(K'(E[n])/K'(\mu_n)) = \operatorname{SL}_2(\mathbb{Z}/n\mathbb{Z}) \), so comparing cardinalities forces \( K'(\mu_n) = K(\mu_n) \). Consequently, we have $F = \mathbb{F}_q(\mu_n)$.
\end{proof}

Lastly, we show that if \( E \) is one of the other two elliptic curves considered in Theorem \ref{thm:3.2}, then for all odd integers \( n \) coprime to \( q \), we still have 
\[
\Gal(K(E[n])/K(\mu_n)) = \operatorname{SL}_2(\mathbb{Z}/n\mathbb{Z}).
\]

If \( E \) is one of these curves, then \( E \) is isomorphic to the Legendre curve over \( \mathbb{F}_{q^3}(\lambda) \) or \( \mathbb{F}_{q^2}(\lambda) \). Let $K' = \mathbb{F}_{q^3}(\lambda)$ or $\mathbb{F}_{q^2}(\lambda)$. We have $\Gal(K'(E[n])/K'(\mu_n)) = \operatorname{SL}_2(\mathbb{Z}/n\mathbb{Z})
$ by Theorem \ref{thm: Galois_n_torsion}. 

We consider the following diagram
$$
\begin{tikzcd}
 & {K'(E[n])}\\
{K(E[n])} \arrow[ru, no head]& \\
& K'(\mu_n) \arrow[uu, no head] \\
K(\mu_n) \arrow[ru, "\text{deg} \le 3"', no head] \arrow[uu, no head] & 
\end{tikzcd}
$$
Since \( [K'(\mu_n):K(\mu_n)] \le 3 \), the intersection \( K(E[n]) \cap K'(\mu_n) \) is either \( K(\mu_n) \) or \( K'(\mu_n) \). 

In the first case, we have
\[
\Gal(K(E[n])/K(\mu_n)) \simeq \Gal(K'(E[n])/K'(\mu_n)) = \operatorname{SL}_2(\mathbb{Z}/n\mathbb{Z}).
\]

In the second case, we have \( K'(\mu_n) \subset K(E[n]) \) so that \( K(E[n]) = K'(E[n]) \), and therefore
\[
\Gal(K(E[n])/K(\mu_n)) \supset \Gal(K(E[n])/K'(\mu_n)) =  \Gal(K'(E[n])/K'(\mu_n)) =\operatorname{SL}_2(\mathbb{Z}/n\mathbb{Z}).
\]
Since \( \Gal(K(E[n])/K(\mu_n)) \) is a subgroup of \( \operatorname{SL}_2(\mathbb{Z}/n\mathbb{Z}) \), we conclude that 
\[
\Gal(K(E[n])/K(\mu_n)) = \operatorname{SL}_2(\mathbb{Z}/n\mathbb{Z})
\]
in all cases.

\subsection{Main theorem on the $l^k$-torsion points of a family of elliptic curves}
We now prove Theorem \ref{thm:main1} using the Galois groups of the \(n\)-torsion point fields and Chebotarev Density Theorem. In this section, we focus on the \(l^k\)-torsion points, and generalize the results to arbitrary \(n\)-torsion points in the next section.

Let $K = \mathbb{F}_q(\lambda)$ be a function field with $\operatorname{char}(K)>3$, and let \(E\) be a non-isotrivial elliptic curve defined over \(K\).  For any degree 1 place \(\lambda_0 \in \mathbb{P}^1(\mathbb{F}_q)\), we
denote by \(E_{\lambda_0}\) the reduction of \(E\) at \(\lambda_0\). We also define 
$$r(n, \lambda_0) = \# \{\mathbb{F}_q\text{-roots to the n-th division polynomial }\Lambda_n(x, \lambda_0) \text{ of }E_{\lambda_0}\},$$
$$R^+(n, \lambda_0) = \{P\in E_{\lambda_0}[n]: \sigma_{q,\lambda_0}(P) = P\},$$
$$R^+(n, \lambda_0) = \{P\in E_{\lambda_0}[n]: \sigma_{q,\lambda_0}(P) = -P\},$$
$$r^+(n, \lambda_0) = \Big|\{P\in E_{\lambda_0}[n]\setminus \{O\}: \sigma_{q,\lambda_0}(P) = P\}/(-P\sim P)\Big|,$$
$$r^-(n, \lambda_0) = \Big|\{P\in E_{\lambda_0}[n]\setminus \{O\}: \sigma_{q,\lambda_0}(P) = -P\}/(-P\sim P)\Big|,$$
where we write $/(-P \sim P)$ to indicate that we only count each pair of a point and its negative once. It is clear that $r(n, \lambda_0) = r^+(n, \lambda_0)+r^-(n,\lambda_0)$. If $n$ is and odd integer, we also have $r^\pm(n, \lambda_0) = \frac{|R^{\pm}(n, \lambda_0) - 1|}{2}$.

Since we are only concerned with degree 1 primes, we now quote a version of the Chebotarev Density Theorem for such primes (see, e.g., \cite[Lemma 1]{Chebota}):

\begin{thm}(Chebotarev Density Theorem for Degree 1 Primes)\label{thm:Chebotarev}\par
    Let \( K \) be a function field over \( \mathbb{F}_q \), and \( L \) be a finite Galois extension of \( K \). Denote by \( L_0 = L \cap \overline{\mathbb{F}_q} \) the field of constants of \( L \), and let \( \mathfrak{C} \) be a conjugacy class in \( \Gal(L/K) \). If \( C_1 \) denotes the set of unramified degree 1 primes of \( K \) whose Frobenius lies in \( \mathfrak{C} \), then
    \[
    |C_1| = \frac{|\mathfrak{C}|}{[L : L_0 K]} \, q + O(\sqrt{q}).
    \]
\end{thm}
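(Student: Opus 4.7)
The plan is to translate $|C_1|$ into a count of fixed points of a twisted Frobenius on the upper covering curve, with Weil's bound on smooth projective curves as the main analytic input.

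Setup. Let $C$ and $C'$ be the smooth projective curves over $\mathbb{F}_q$ with function fields $K$ and $L$, so that the extension $K \subset L$ corresponds to a Galois cover $\pi \colon C' \to C$ with group $G = \Gal(L/K)$; let $F_q$ be the geometric Frobenius. Since the field of constants of $L$ is $L_0 = \mathbb{F}_{q^m}$, the base change $C' \times_{\mathbb{F}_q} \overline{\mathbb{F}_q}$ splits into $m$ connected components $X_1, \dots, X_m$, each a smooth projective curve of common genus $g'$ over $\overline{\mathbb{F}_q}$, cyclically permuted by $F_q$. The action of $G$ on these components factors through the quotient $G \twoheadrightarrow \Gal(L_0/\mathbb{F}_q)$, and the image of $F_q$ in $\Gal(L_0/\mathbb{F}_q)$ is the canonical generator.

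Combinatorial reduction. For an unramified degree $1$ place $v$ of $K$ with Frobenius class $\mathfrak{C}$, every place of $L$ above $v$ has $\mathbb{F}_q$-degree equal to the common order $f$ of elements of $\mathfrak{C}$, and there are $|G|/f$ such places; orbit–stabilizer applied to the $G$-action on these places shows that for each fixed $\sigma \in \mathfrak{C}$, exactly $|G|/(f\,|\mathfrak{C}|)$ of them have $\Frob_w = \sigma$. Letting $N(\sigma)$ denote the total number of unramified places $w$ of $L$ of degree $f$ over degree $1$ places of $K$ with $\Frob_w = \sigma$, this gives $|C_1| = (f/|G|)\sum_{\sigma \in \mathfrak{C}} N(\sigma)$. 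I would then geometrize $N(\sigma)$: define $S_\sigma = \{P \in C'(\overline{\mathbb{F}_q}) : \sigma^{-1} F_q(P) = P\}$; since $\pi$ is $G$-invariant and commutes with $F_q$, any $P \in S_\sigma$ has $\pi(P) \in C(\mathbb{F}_q)$, and every degree $f$ unramified place over a degree $1$ place with $\Frob_w = \sigma$ contributes exactly one $F_q$-orbit of size $f$ to $S_\sigma$; hence $N(\sigma) = |S_\sigma|/f + O(1)$, with the $O(1)$ absorbing contributions from the finite set of ramified primes.

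Weil bound on $\sigma^{-1}F_q$. Because $F_q$ shifts the $X_i$ by one position in the cyclic $\Gal(L_0/\mathbb{F}_q)$-action on components, the map $\sigma^{-1}F_q$ preserves each $X_i$ exactly when $\sigma|_{L_0} = F_q|_{L_0}$—a property depending only on the class $\mathfrak{C}$. If it fails, $\sigma^{-1}F_q$ nontrivially permutes components and $|S_\sigma| = O(1)$. If it holds, then on each $X_i$ the endomorphism $\sigma^{-1}F_q$ is the geometric Frobenius of an $\mathbb{F}_q$-form of $X_i$ (the twist of $X_i$ by the $\overline{\mathbb{F}_q}$-automorphism induced by $\sigma^{-1}$, which descends since it fixes $L_0$); Weil's bound for a genus-$g'$ curve over $\mathbb{F}_q$ gives $q + O(g'\sqrt{q})$ fixed points per component, and summing over the $m$ components yields $|S_\sigma| = mq + O(\sqrt{q})$.

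Assembly. Assuming $\mathfrak{C}$ projects to $F_q|_{L_0}$ (otherwise $|C_1| = O(1)$, absorbed in the error), the three steps combine to
\[
|C_1| \;=\; \frac{f}{|G|} \cdot |\mathfrak{C}| \cdot \frac{mq + O(\sqrt{q})}{f} \;=\; \frac{|\mathfrak{C}|\,m}{|G|}\,q + O(\sqrt{q}) \;=\; \frac{|\mathfrak{C}|}{[L : L_0 K]}\,q + O(\sqrt{q}),
\]
using $[L : L_0 K] = |G|/m$. The main obstacle I expect is the descent step: verifying that $\sigma^{-1}F_q$ on each preserved component $X_i$ is genuinely the geometric Frobenius of a smooth projective $\mathbb{F}_q$-form, so that Weil's bound applies uniformly with error $O(g'\sqrt{q})$. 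This reduces to a standard Galois-descent argument using the identification of the $X_i$ with base changes of the geometrically connected $L_0$-curve with function field $L$, together with the fact that twisting by an $\overline{\mathbb{F}_q}$-automorphism preserves the genus. The remaining bookkeeping with decomposition groups and the count of $w$'s with prescribed Frobenius is routine.
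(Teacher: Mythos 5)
The paper never proves this statement: it is quoted verbatim from Jarden's \cite{Chebota} (his Lemma 1), so there is no internal argument to compare against. Your sketch is essentially the standard proof of the effective function-field Chebotarev theorem that such references rest on: convert the count of degree-$1$ places with Frobenius in $\mathfrak{C}$ into a count of fixed points of the twisted map $\sigma^{-1}F_q$ on the covering curve, then invoke the Riemann Hypothesis for curves on each geometric component. Your bookkeeping is correct ($|G|/f$ places over each $v\in C_1$, of which $|G|/(f|\mathfrak{C}|)$ have $\Frob_w=\sigma$, each contributing one $F_q$-orbit of length $f$ to $S_\sigma$ away from the finitely many ramified fibres), as is the component dichotomy governed by the image of $\sigma$ in $\Gal(L_0/\mathbb{F}_q)$; and the descent step you flag is indeed the standard lemma that a self-map of a smooth projective curve over $\overline{\mathbb{F}_q}$, some power of which is the $q^N$-Frobenius of a model, is itself the geometric Frobenius of an $\mathbb{F}_q$-form of the same genus (alternatively one can avoid descent entirely via the Lefschetz trace formula, since $(\sigma^{-1}F_q)^N=F_q^N$ for suitable $N$ forces the eigenvalues on $H^1$ to have absolute value $\sqrt{q}$). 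Two minor caveats. First, your remark that the incompatible case $\sigma|_{L_0}\neq \Frob_q|_{L_0}$ is ``absorbed in the error'' is not literally consistent with the displayed formula, whose main term has size $q$: in that case $C_1$ is empty, and the correct reading of the theorem (as in Jarden) carries the hypothesis that $\mathfrak{C}$ restricts to the $q$-power Frobenius on $L_0$; this is an imprecision of the quoted statement rather than of your argument, and it is harmless in the paper's applications, where the conjugacy classes used satisfy $\det\Frob_{\lambda_0}\equiv q$ and hence act correctly on the constants. Second, watch the arithmetic-versus-geometric Frobenius convention, which may replace $\sigma$ by $\sigma^{-1}$ in the definition of $S_\sigma$; since you sum over a full conjugacy class this does not affect the final count.
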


In our setup, we will use $K = \mathbb{F}_q(\lambda)$, $L = K(E[n])$,  $L_0 = \mathbb{F}_q(\mu_n)$, $L_0K = K(\mu_n)$, and $[L: L_0K]$ is the order of the geometric Galois group of $K(E[n])/K$.  

\begin{thm} (Main result on the $l^k$-torsion points of a family of elliptic curves) \label{thm: main l^k}\par
   Let $K = \mathbb{F}_q(\lambda)$ be a function field with $\operatorname{char}(K)>3$, and \(E\) be a non-isotrivial elliptic curve defined over \(K\). Let $l$ be an odd prime number coprime to $q$. If the geometric Galois group of $K(E[l^k])/K$ is  $\operatorname{SL}_2(\mathbb{Z}/l^k\mathbb{Z})$, we have
    $$\sum_{\lambda_0}r(l^k, \lambda_0) = kq+O(\sqrt{q}).$$
    where the sum is taken over all \(\lambda_0\in\mathbb{P}^1(\mathbb{F}_q)\) at which \(E\) has good reduction.

\end{thm}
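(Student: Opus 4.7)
The plan is to use Chebotarev's density theorem to convert the sum over places into a sum over Galois elements in $\operatorname{GL}_2(\mathbb{Z}/l^k\mathbb{Z})$, then evaluate the resulting combinatorial sum directly.

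First, I would translate $r(l^k,\lambda_0)$ into Frobenius data. For a degree $1$ place $\lambda_0$ of good reduction, the Frobenius $\sigma = \sigma_{q,\lambda_0}$ acts on $E_{\lambda_0}[l^k]\simeq(\mathbb{Z}/l^k\mathbb{Z})^2$, and an $\mathbb{F}_q$-root of $\Lambda_{l^k}(x,\lambda_0)$ corresponds to a pair $\{P,-P\}$ of non-identity torsion points with $\sigma(P)=\pm P$. Since $l$ is odd, $\ker(\sigma-I)\cap\ker(\sigma+I)=\{O\}$, so
$$r(l^k,\lambda_0)=\tfrac{1}{2}\bigl(|\ker(\sigma-I)|+|\ker(\sigma+I)|-2\bigr),$$
which is a class function of $\sigma$ in $\operatorname{GL}_2(\mathbb{Z}/l^k\mathbb{Z})$.

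Next, I would apply Chebotarev (Theorem \ref{thm:Chebotarev}) with $L=K(E[l^k])$. By N\'eron--Ogg--Shafarevich the good-reduction degree $1$ places are unramified; their Frobenius elements all satisfy $\det=q$ by the Weil-pairing lemma; and the hypothesis on the geometric Galois group ensures they sweep the entire coset $G_q:=\{\sigma\in\operatorname{GL}_2(\mathbb{Z}/l^k\mathbb{Z}):\det\sigma=q\}$, of size $|\operatorname{SL}_2(\mathbb{Z}/l^k\mathbb{Z})|$. Summing the Chebotarev estimate over the (finitely many) conjugacy classes contained in $G_q$ gives
$$\sum_{\lambda_0}r(l^k,\lambda_0)=\frac{q}{|\operatorname{SL}_2(\mathbb{Z}/l^k\mathbb{Z})|}\sum_{\sigma\in G_q}\frac{|\ker(\sigma-I)|+|\ker(\sigma+I)|-2}{2}+O(\sqrt{q}).$$

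The technical heart is to evaluate $S:=\sum_{\sigma\in G_q}|\ker(\sigma-I)|$. I would switch summation order to obtain $S=\sum_{P\in(\mathbb{Z}/l^k\mathbb{Z})^2}|\operatorname{Stab}_{G_q}(P)|$, then decompose $(\mathbb{Z}/l^k\mathbb{Z})^2$ into $\operatorname{GL}_2$-orbits indexed by the order of $P$: the origin, plus for each $0\le j<k$ the orbit of elements of order $l^{k-j}$, which contains $l^{2(k-j)-2}(l^2-1)$ elements. Taking the representative $P=(l^j,0)$, the stabilizer equation in $\operatorname{GL}_2$ forces $a\equiv 1$ and $c\equiv 0 \pmod{l^{k-j}}$; imposing $\det\sigma=q$ then uniquely determines $d$ once $(a,b,c)$ are chosen, giving a stabilizer of size $l^{2j+k}$ in $G_q$. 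Multiplying, each nonzero orbit contributes exactly $|\operatorname{SL}_2(\mathbb{Z}/l^k\mathbb{Z})|$ to $S$, and the origin contributes another $|\operatorname{SL}_2|$, so $S=(k+1)|\operatorname{SL}_2|$. The parallel sum $\sum_{\sigma\in G_q}|\ker(\sigma+I)|$ matches by the involution $\sigma\mapsto-\sigma$ on $G_q$, which preserves determinants and swaps fixed with anti-fixed points. Substituting back, the inner sum equals $k|\operatorname{SL}_2|$ and the theorem follows.

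The main obstacle I anticipate is the combinatorial bookkeeping in Step 3: the $\det=q$ constraint truncates the $\operatorname{GL}_2$-stabilizer of each point, and verifying orbit by orbit that the stabilizer in $G_q$ has precisely the size $l^{2j+k}$ requires a careful matrix calculation. It is exactly the miracle that each nonzero orbit contributes one clean copy of $|\operatorname{SL}_2(\mathbb{Z}/l^k\mathbb{Z})|$ that produces the clean leading coefficient $k$ in the final estimate.
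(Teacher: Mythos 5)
Your proof is correct, and it takes a genuinely different route from the paper's. The paper's argument splits into three cases according to $q \bmod l$ (because this governs whether $\Frob_{\lambda_0}$ is forced to be diagonalizable and which canonical forms occur), and then proceeds by induction on $k$, counting the ``new'' roots contributed at each level of the $l$-adic tower via an explicit census of conjugacy classes. Your argument sidesteps both the case analysis and the induction: you write $r(l^k,\lambda_0)$ as a class function $\sigma \mapsto \frac{1}{2}(|\ker(\sigma-I)|+|\ker(\sigma+I)|-2)$ on the determinant-$q$ coset $G_q$, apply Chebotarev once, and then evaluate $\sum_{\sigma \in G_q}|\ker(\sigma-I)|$ by swapping the order of summation. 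The observation that $G_q$ is conjugation-stable in $\operatorname{GL}_2(\mathbb{Z}/l^k\mathbb{Z})$ (so stabilizers have constant size along each $\operatorname{GL}_2$-orbit, even though $G_q$ is only a coset and not a group) makes the orbit decomposition legitimate, and I verified the stabilizer count $l^{2j+k}$ and the identity $l^{2(k-j)-2}(l^2-1)\cdot l^{2j+k}=l^{3k-2}(l^2-1)=|\operatorname{SL}_2(\mathbb{Z}/l^k\mathbb{Z})|$; the involution $\sigma\mapsto -\sigma$ handles the $+I$ term, and the arithmetic $\frac{1}{2}(2(k+1)-2)|\operatorname{SL}_2| = k|\operatorname{SL}_2|$ closes the calculation. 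Your version is cleaner and more uniform; the paper's version is more explicit about which Frobenius conjugacy classes actually contribute, which it reuses when passing from prime powers to general $n$. One small point worth spelling out if you write this up in full: the hypothesis that the geometric Galois group is $\operatorname{SL}_2(\mathbb{Z}/l^k\mathbb{Z})$, combined with the Weil-pairing constraint $\det\Frob_{\lambda_0}=q$, does force $\Gal(K(E[l^k])/K)$ to equal the full group $\{\sigma : \det\sigma\in\langle q\rangle\}$ and the constant field to be $\mathbb{F}_q(\mu_{l^k})$, which is exactly what is needed for the Chebotarev count over $G_q$ to close up to $q+O(\sqrt{q})$ places; and the finitely many bad-reduction or ramified degree-$1$ places are absorbed into the error term.
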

\begin{proof}
    We separate the proof into three cases, depending on the congruence of \(q \pmod{l}\), and proceed by induction on \(k\).

    \textbf{Case 1:} Assume \(q \not\equiv \pm 1 \pmod{l}\). We begin with the base case \(k = 1\). Suppose that \(E\) has good reduction at \(\lambda_0 \in \mathbb{P}^1(\mathbb{F}_q)\). Let \(\sigma_{q,\lambda_0}\in \operatorname{GL}_2(\mathbb{Z}/l\mathbb{Z})\) denote the matrix representing the action of the Frobenius \(q\)-map on \(E_{\lambda_0}[l]\). By the Weil pairing, we have $
    \zeta^{\det \sigma_{q,\lambda_0}} = \zeta^{\det \text{Frob}_{\lambda_0}} = \zeta^q \quad \text{for all } \zeta \in \mu_l$, so that
    \[
    \det \sigma_{q,\lambda_0} \equiv q \not\equiv \pm 1 \pmod{l}.
    \]

    In particular, if \(E_{\lambda_0}\) possesses a nonzero \(l\)-torsion point with \(x\)-coordinate in \(\mathbb{F}_q\), then \(\sigma_{q,\lambda_0}\) has an eigenvalue \(1\) or \(-1\), and $R^\pm(l, \lambda_0)$ are exactly the corresponding eigenspaces. Denote by \(\mathfrak{C}^+\) the conjugacy class of \(\begin{bmatrix} 1 & 0 \\ 0 & q \end{bmatrix}\) in \(\Gal(K(E[l])/K)\), and by \(\mathfrak{C}^-\) the conjugacy class of \(\begin{bmatrix} -1 & 0 \\ 0 & -q \end{bmatrix}\). We also use the notation $\lambda_0\in\mathfrak{C}^\pm$ to mean  \(\text{Frob}_{\lambda_0}\in \mathfrak{C}^{\pm}\). For each \(\lambda_0\in \mathfrak{C}^{\pm}\), we have
    \[
    r(l, \lambda_0) = \frac{l-1}{2}.
    \]

    Now, we count the size of the conjugacy class $\mathfrak{C}^{\pm}$. The stabilizer of such a diagonal matrix under conjugation consists of matrices of the form $\begin{bmatrix}a & 0 \\ 0 & d\end{bmatrix}$ with $ad\neq 0$. Also, $\Gal(K(E[l])/K)$ is a normal subgroup of $\text{GL}_2(\mathbb{Z}/l\mathbb{Z})$. This implies that the conjugacy class of such a matrix in $\Gal(K(E[l])/K)$ is the same as its conjugacy class in $\text{GL}_2(\mathbb{Z}/l\mathbb{Z})$. Therefore, we have 
    \[
    |\mathfrak{C}^\pm| = \frac{|\operatorname{GL}_2(\mathbb{Z}/l\mathbb{Z})|}{(l-1)^2}.
    \]
    By Theorem \ref{thm:Chebotarev}, the number of \(\lambda_0 \in \mathbb{P}^1(\mathbb{F}_q)\) with \(\text{Frob}_{\lambda_0} \in \mathfrak{C}^\pm\) is
    \[
    \frac{|\operatorname{GL}_2(\mathbb{Z}/l\mathbb{Z})|}{|\operatorname{SL}_2(\mathbb{Z}/l\mathbb{Z})|} \cdot \frac{1}{(l-1)^2} \, q + O(\sqrt{q})
    = \frac{1}{l-1}q + O(\sqrt{q}).
    \]
    Therefore,
    \[
    \sum_{\lambda_0} r(l, \lambda_0)
    = 2 \cdot \left(\frac{1}{l-1}q + O(\sqrt{q})\right) \cdot \frac{l-1}{2}
    = q + O(\sqrt{q}).
    \]

    Next, we proceed by induction by $k$. Assume the result holds for \(l^k\). We now prove it for \(l^{k+1}\). Notice that the roots of \(\Lambda_{l^k}(x, \lambda_0)\) are also roots of \(\Lambda_{l^{k+1}}(x, \lambda_0)\). Thus, it suffices to count the number of \(\mathbb{F}_q\)-roots of \(\Lambda_{l^{k+1}}(x, \lambda_0)\) that are not roots of \(\Lambda_{l^k}(x, \lambda_0)\), and show that the total contribution over all good \(\lambda_0\) is \(q + O(\sqrt{q})\). 

   Let \(\lambda_0\) be a good place such that \(E_{\lambda_0}\) has an \(l^{k+1}\)-torsion point which is not an \(l^k\)-torsion point with \(x\)-coordinate in \(\mathbb{F}_q\). Such a point is also an $l$-torsion point, and the previous argument implies that $\text{Frob}_{\lambda_0}$ has two distinct eigenvalues as a matrix in $\text{GL}_2(\mathbb{Z}/l\mathbb{Z})$. We can also view $\text{Frob}_{\lambda_0}$ as a matrix in $\text{GL}_2(\mathbb{Z}_l)$ via its action on the Tate module \(T_l(E_{\lambda_0})\). Hensel's lemma implies that $\text{Frob}_{\lambda_0}$ also has two distinct eigenvalues in $\mathbb{Z}_l$, and thus is diagonalizable as an $l$-adic matrix. Now, we view $\text{Frob}_{\lambda_0}$ as a matrix in $\text{GL}_2(\mathbb{Z}/l^{k+1}\mathbb{Z})$ via its action on $E_{\lambda_0}[l^{k+1}]$. We know that it is diagonalizable, and has canonical form $\pm\begin{bmatrix}1 & 0 \\ 0 &  q \end{bmatrix}$.

    Denote by \(\mathfrak{C}^\pm\) the corresponding conjugacy class in \(\Gal(K(E[l^{k+1}])/K) \subset \operatorname{GL}_2(\mathbb{Z}/l^{k+1}\mathbb{Z})\). For \(\lambda_0\in\mathbb{P}^1(\mathbb{F}_q)\) such that \(\text{Frob}_{\lambda_0}\in \mathfrak{C}^\pm\), the additional \(l^{k+1}\)-torsion points (beyond those coming from \(E[l^k]\)) contribute
    \[
    r(l^{k+1}, \lambda_0) - r(l^k, \lambda_0) = \frac{l^{k+1}-l^k}{2} = \frac{\varphi(l^{k+1})}{2}.
    \]
    Using the same counting method as in the base case, we find that
    \[
    |\mathfrak{C}^\pm| = \frac{|\operatorname{GL}_2(\mathbb{Z}/l^{k+1}\mathbb{Z})|}{|\operatorname{SL}_2(\mathbb{Z}/l^{k+1}\mathbb{Z})|} \cdot \frac{1}{\varphi(l^{k+1})^2} \, q + O(\sqrt{q})
    = \frac{1}{\varphi(l^{k+1})}q + O(\sqrt{q}).
    \]
    Hence,
    \[
    \sum_{\lambda_0} \Bigl(r(l^{k+1}, \lambda_0)-r(l^k, \lambda_0)\Bigr)
    = 2 \cdot \left(\frac{1}{\varphi(l^{k+1})}q + O(\sqrt{q})\right) \cdot \frac{\varphi(l^{k+1})}{2}
    = q + O(\sqrt{q}).
    \]
    Combining this with the inductive hypothesis, we obtain
    \[
    \sum_{\lambda_0} r(l^{k+1}, \lambda_0) = (k+1)q + O(\sqrt{q}).
    \]

\textbf{Case 2:} The proof for the case \( q \equiv -1 \pmod{l} \) is analogous to that of Case 1. In both cases, when \( \Lambda_l(x, \lambda_0) \) has \(\mathbb{F}_q\)-roots, the matrix \( \text{Frob}_{\lambda_0} \in \operatorname{GL}_2(\mathbb{Z}_l) \) is diagonalizable with two distinct eigenvalues.

\textbf{Case 3:} Now, suppose that \( q \equiv 1 \pmod{l} \). In this situation, if \( \Lambda_l(x, \lambda_0) \) has an \(\mathbb{F}_q\)-root, then \( \Frob_{\lambda_0} \) must have an eigenvalue \( \pm 1 \). Since 
\[
\det \Frob_{\lambda_0} \equiv q \equiv 1 \pmod{l},
\]
it follows that \( \Frob_{\lambda_0}\in\text{GL}_2(\mathbb{Z}/l\mathbb{Z}) \) has a repeated eigenvalue of either \( 1 \) or \( -1 \).

\emph{Subcase 3a:} If \( \Frob_{\lambda_0} \) is diagonalizable as a matrix in $\text{GL}_2(\mathbb{Z}/l\mathbb{Z})$, then its canonical form is \(\pm I_2\). Denote by \( \mathfrak{C}^\pm_\infty \) the conjugacy class of \(\pm I_2\) in \( \Gal(K(E[l])/K) \). For each \(\lambda_0 \in \mathfrak{C}^\pm_\infty\), we have $r(l, \lambda_0) = \frac{l^2-1}{2}$. Using Theorem \ref{thm:Chebotarev}, the number of \(\lambda_0 \in \mathbb{P}^1(\mathbb{F}_q)\) with \(\Frob_{\lambda_0}\in \mathfrak{C}^\pm_\infty\) is
\[
\frac{1}{|\operatorname{SL}_2(\mathbb{Z}/l\mathbb{Z})|}q + O(\sqrt{q}).
\]
Thus, the total contribution from these classes is
\[
\sum_{\lambda_0 \in \mathfrak{C}^\pm_\infty} r(l, \lambda_0)
= 2 \cdot \left(\frac{1}{|\operatorname{SL}_2(\mathbb{Z}/l\mathbb{Z})|}q + O(\sqrt{q})\right) \cdot \frac{l^2-1}{2}
= \frac{1}{l}q + O(\sqrt{q}).
\]

\emph{Subcase 3b:} If \( \Frob_{\lambda_0} \) is not diagonalizable, then its canonical form is $
\begin{bmatrix} 1 & 1 \\ 0 & 1 \end{bmatrix} $ or $\begin{bmatrix} -1 & 1 \\ 0 & -1 \end{bmatrix}$.
Denote these two conjugacy classes by \( \mathfrak{C}^\pm_0 \). The stabilizer of a matrix in \( \mathfrak{C}^\pm_0 \) consists of matrices of the form $
\begin{bmatrix} a & b \\ 0 & a \end{bmatrix}$ with $a\neq 0$. So there are 
\((l-1)l\) of them. For each \(\lambda_0 \in \mathfrak{C}^\pm_0\), we have $
r(l, \lambda_0) = \frac{l-1}{2}$. Thus, the total contribution from these classes is
\[
\sum_{\lambda_0 \in \mathfrak{C}^\pm_0} r(l,\lambda_0)
= 2 \cdot \left(\frac{|\operatorname{GL}_2(\mathbb{Z}/l\mathbb{Z})|}{|\operatorname{SL}_2(\mathbb{Z}/l\mathbb{Z})|} \cdot \frac{1}{(l-1)l} \, q + O(\sqrt{q})\right) \cdot \frac{l-1}{2}
= \frac{l-1}{l}q + O(\sqrt{q}).
\]

Therefore, in Case 3 the total number of \(\mathbb{F}_q\)-roots of \(\Lambda_l(x, \lambda_0)\) over all good \(\lambda_0\) is
\[
\sum_{\lambda_0} r(l, \lambda_0)
= \sum_{\lambda_0 \in \mathfrak{C}^\pm_\infty} r(l, \lambda_0)
+ \sum_{\lambda_0 \in \mathfrak{C}^\pm_0} r(l, \lambda_0)
= \frac{1}{l}q + \frac{l-1}{l}q + O(\sqrt{q})
= q + O(\sqrt{q}).
\]

We then proceed by induction on \( k \) to prove the statement for \( l^{k+1} \). We will show that the number of new \(\mathbb{F}_q\)-roots to \(\Lambda_{l^{k+1}}(x, \lambda_0)\) that are not roots of \(\Lambda_{l^k}(x, \lambda_0)\) is \( q + O(\sqrt{q}) \) when summed over all good \(\lambda_0\). 

If \(\Lambda_{l^{k+1}}(x, \lambda_0)\) has an \(\mathbb{F}_q\)-root that is not a root of \(\Lambda_{l^k}(x, \lambda_0)\), then there exists some \(P \in E_{\lambda_0}[l^{k+1}] \setminus E_{\lambda_0}[l^k]\) such that \(\Frob_{\lambda_0}(P) = \pm P\). If \(q \not\equiv 1 \pmod{l^{k+1}}\), the characteristic polynomial of \(\Frob_{\lambda_0}\) has two distinct roots modulo \(l^{k+1}\). By Hensel's lemma, \(\Frob_{\lambda_0}\) is diagonalizable in \(\mathbb{Z}_l\) and hence in \(\mathbb{Z}/l^{k+1}\mathbb{Z}\); the argument then proceeds as in the base case.

Now suppose \(q \equiv 1 \pmod{l^{k+1}}\). For simplicity, we restrict our attention to  those \(P \in E_{\lambda_0}[l^{k+1}] \setminus E_{\lambda_0}[l^k]\) satisfying \(\Frob_{\lambda_0}(P) = P\). Pick a basis \(\{P, Q\}\) for \(E_{\lambda_0}[l^{k+1}]\). By a change of basis to $\{P, cQ\}$ for some suitable $c$, we may assume that $\Frob_{\lambda_0} = \begin{bmatrix} 1 & b \\ 0 & 1 \end{bmatrix} \in \text{GL}_2(\mathbb{Z}/l^{k+1}\mathbb{Z})$. 

We pick a representative of $b\in\mathbb{Z}/l^{k+1}\mathbb{Z}$ as an integer between $0\leq b\leq l^{k+1}-1$. If $b = 0$, then $\text{Frob}_{\lambda_0}$ has the canonical form $\begin{bmatrix}
    1 & 0 \\ 0 & 1
\end{bmatrix}$, and we denote by $\mathfrak{C}_\infty^+$ the conjugacy class of this element in $\Gal(K(E[l^{k+1}])/K)$. 

If $b\neq 0$, we write $b = l^vx$ for some $0\leq v \leq k$ and $x$ coprime to $l$. We preform a change of basis by replacing $\{P, Q\}$ by $\{xP, Q\}$, and we see that $\Frob_{\lambda_0}$ has the canonical form $\begin{bmatrix}
    1 & l^v \\ 0 & 1
\end{bmatrix}$. We denote by $\mathfrak{C}_v^+$ the conjugacy of this element in $\Gal(K(E[l^{k+1}])/K)$. 

Next, we compute the size of those conjugacy classes by counting the size of their stabilizers in $\text{GL}_2(\mathbb{Z}/l^{k+1}\mathbb{Z})$. The stabilizer of $\begin{bmatrix}
    1 & l^v \\ 0 & 1
\end{bmatrix}$ consists of matrices of the form $\begin{bmatrix}a & b \\ c & d\end{bmatrix}$ where $(a-d)l^{v} = 0$ and $c l^{v} = 0$. It has size $\varphi(l^{k+1})l^{v}l^{k+1} l^{v}$, where each factor represents the number of choices for $a,b,c,d$ respectively. In particular, for $0\leq v\leq k$, we have
$$|\mathfrak{C}^+_v| = \frac{|\text{GL}_2(\mathbb{Z}/l^{k+1}\mathbb{Z})|}{\varphi(l^{k+1})l^{2v+k+1}}. $$
For $v = \infty$, we have 
$$|\mathfrak{C}^+_\infty| = |\text{GL}_2(\mathbb{Z}/l^{k+1}\mathbb{Z})| = (l^{2(k+1)} - l^{2(k+1)-1})(l^{2(k+1)} - l^{2(k+1)-2}). $$
%(The first column is not a multiple of $l$, and the second vector is not a multiple of the first column plus a multiple of $l$) 

Then, we compute $r^+(l^{k+1}, \lambda_0)-r^+(l^{k}, \lambda_0)$ for $\lambda_0$ in each of those conjugacy classes. We choose a basis $\{P,Q\}$ of $E_{\lambda_0}[l^{k+1}]$ such that $\Frob_{\lambda_0}$ has the canonical form described above. For $0\leq v\leq k$, and $\lambda_0\in\mathfrak{C}_v^+$ we have 
$$R^+(l^{k+1}, \lambda_0)\setminus R^+(l^k, \lambda_0) = \left\{xP+yl^{k+1-v}Q: x\in (\mathbb{Z}/l^{k+1}\mathbb{Z})^\times, 0\leq y< l^{v}\right\},$$
and thus 
$$r^+(l^{k+1}, \lambda_0) - r^+(l^{k}, \lambda_0) = \frac{l^{v}\varphi(l^{k+1})}{2}.$$
Therefore, the contribution from the conjugacy class \(\mathfrak{C}^+_v\) is
\begin{align*}
\sum_{\lambda_0 \in \mathfrak{C}^+_v} \Bigl(r^+(l^{k+1}, \lambda_0) - r^+(l^k, \lambda_0)\Bigr)
& =  \left(\frac{|\text{GL}_2(\mathbb{Z}/l^{k+1}\mathbb{Z})|}{|\operatorname{SL}_2(\mathbb{Z}/l^{k+1}\mathbb{Z})|}\cdot \frac{1}{\varphi(l^{k+1})l^{2v+k+1}}q + O(\sqrt q)\right) \cdot \frac{l^v\varphi(l^{k+1})}{2} \\
&= \frac{1}{2}\frac{\varphi(l^{k+1})}{l^{k+1}l^v}q + O(\sqrt{q}).
\end{align*}
Similarly, the contribution from the conjugacy class $\mathfrak{C}_\infty^+$ is 
\begin{align*}\sum_{\lambda_0\in\mathfrak{C}_\infty^+}\Bigl(r^+(l^{k+1}, \lambda_0) - r^+(l^k, \lambda_0)\Bigr) &= \left(\frac{1}{|\operatorname{SL}_2(\mathbb{Z}/l^{k+1}\mathbb{Z})|}q + O(\sqrt{q})\right)\cdot \frac{l^{2(k+1)} - l^{2k}}{2}  \\
&= \frac{1}{2}\frac{1}{l^{k+1}}q+O(\sqrt{q}).\end{align*}

Summing over all conjugacy classes, we deduce that
\begin{align*}
    \sum_{\lambda_0}\Bigl(r^+(l^{k+1}, \lambda_0) - r^+(l^k, \lambda_0)\Bigr) & = \sum_{\lambda_0\in\mathfrak{C}_\infty^+} \Bigl(r^+(l^{k+1}, \lambda_0) - r^+(l^k, \lambda_0)\Bigr) + \sum_{v = 0}^k\sum_{\lambda_0\in \mathfrak{C}^+_v} \Bigl(r^+(l^{k+1}, \lambda_0) - r^+(l^k, \lambda_0)\Bigr)  \\
    &= \frac{1}{2}\frac{1}{l^{k+1}}q + \sum_{v=0}^k  \left(\frac{1}{2}\frac{\varphi(l^{k+1})}{l^{k+1}l^v}q + O(\sqrt{q}\right) +  O(\sqrt{q}) \\
    &= \frac{1}{2}q\left(\frac{1}{l^{k+1}} + \frac{\varphi(l^{k+1})}{l^{k+1}}\frac{l-l^{-k}}{l-1} \right) + O(\sqrt q) \\
    &= \frac{1}{2}q+O(\sqrt{q}).
\end{align*}

A similar computation shows that the contribution from the points $P$ such that $\text{Frob}_{\lambda_0}(P) = -P$ is also \( \frac{1}{2}q + O(\sqrt{q}) \). Hence,
\[
\sum_{\lambda_0} \Bigl( r(l^{k+1}, \lambda_0) - r(l^k, \lambda_0) \Bigr) = q + O(\sqrt{q}).
\]
This completes the induction and the proof.

\end{proof}

\subsection{Main theorem on the $n$-torsion points of a family of elliptic curves} 
We now generalize the results of the previous section to all odd integers \(n\) coprime to \(q\). In fact, the following lemma implies that if \(m\) and \(n\) are coprime odd integers, then
\[
r^\pm(mn, \lambda_0) - r^\pm(m, \lambda_0) - r^\pm(n, \lambda_0) = 2\,r^\pm(m, \lambda_0)\,r^\pm(n, \lambda_0).
\]
Using this result together with Theorem \ref{thm: main l^k}, we are now in a position to prove Theorem \ref{thm:main1}, our main theorem on the \(n\)-torsion points of a family of elliptic curves.

\begin{lem}
    Let \(E_{\lambda_0}\) be an elliptic curve over the finite field \(\mathbb{F}_q\), and $\sigma_{q, \lambda_0}$ be the Frobenious $q$-map. Let \(m,n\) be coprime integers, and let \(P\in E_{\lambda_0}[n]\) and \(Q\in E_{\lambda_0}[m]\). Then, \(x(P+Q)\in \mathbb{F}_q\) if and only if either 
    \[
    \sigma_{q, \lambda_0}(P)=P \text{ and } \sigma_{q, \lambda_0}(Q)=Q,
    \]
    or
    \[
    \sigma_{q, \lambda_0}(P)=-P \text{ and } \sigma_{q, \lambda_0}(Q)=-Q.
    \]
\end{lem}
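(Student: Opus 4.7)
The plan is to use the standard characterization that for a point $R$ on $E_{\lambda_0}$, we have $x(R) \in \mathbb{F}_q$ if and only if $\sigma_{q,\lambda_0}(R) = \pm R$, since $R$ and $-R$ share the same $x$-coordinate. Applying this to $R = P+Q$, the condition $x(P+Q) \in \mathbb{F}_q$ is equivalent to
\[
\sigma(P) + \sigma(Q) = \varepsilon (P+Q) \quad \text{for some } \varepsilon \in \{+1,-1\},
\]
where I abbreviate $\sigma = \sigma_{q,\lambda_0}$. The backward direction of the lemma is immediate: if $\sigma(P) = \varepsilon P$ and $\sigma(Q) = \varepsilon Q$ with a common sign, then $\sigma(P+Q) = \varepsilon(P+Q)$.

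For the forward direction, I would rearrange the displayed equation as
\[
\sigma(P) - \varepsilon P \;=\; \varepsilon Q - \sigma(Q).
\]
The left-hand side is a difference of $n$-torsion points and therefore lies in $E_{\lambda_0}[n]$; the right-hand side lies in $E_{\lambda_0}[m]$. Since $\gcd(m,n)=1$, the subgroups $E_{\lambda_0}[n]$ and $E_{\lambda_0}[m]$ intersect trivially, so both sides equal $O$. This yields $\sigma(P) = \varepsilon P$ and $\sigma(Q) = \varepsilon Q$, with the \emph{same} sign $\varepsilon$, as desired.

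There is no real obstacle: the only subtlety to flag is that the two cases $\varepsilon = +1$ and $\varepsilon = -1$ are handled simultaneously by the sign parameter, and that the conclusion forces the sign to be the same for $P$ and $Q$ (one could not have, for example, $\sigma(P) = P$ and $\sigma(Q) = -Q$ give $x(P+Q) \in \mathbb{F}_q$ unless $P$ or $Q$ is trivial), which is exactly what coprimality of $m$ and $n$ guarantees via the trivial intersection of the torsion subgroups.
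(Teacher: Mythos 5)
Your proposal is correct and takes essentially the same approach as the paper: both reduce to $\sigma(P+Q) = \pm(P+Q)$ and then invoke the uniqueness of the decomposition of an $mn$-torsion point into $n$-torsion and $m$-torsion components (equivalently, the trivial intersection $E_{\lambda_0}[m]\cap E_{\lambda_0}[n]=\{O\}$). Your rearrangement $\sigma(P)-\varepsilon P = \varepsilon Q - \sigma(Q)$ spells out the same idea the paper states more tersely.
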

\begin{proof}
    If \(x(P+Q)\in \mathbb{F}_q\), then \(\sigma_{\lambda_0,q}(P+Q)=P+Q\) or \(\sigma_{\lambda_0,q}(P+Q) = -(P+Q)\). Since \(m\) and \(n\) are coprime, the decomposition of an \(mn\)-torsion point into an \(m\)-torsion point and an \(n\)-torsion point is unique. Hence, either \(\sigma_{\lambda_0,q}(P)=P\) and \(\sigma_{\lambda_0,q}(Q)=Q\) or \(\sigma_{\lambda_0,q}(P)=-P\) and \(\sigma_{\lambda_0,q}(Q)=-Q\).
\end{proof}

It follows that
\[
R^\pm(mn, \lambda_0) \setminus \bigl(R^\pm(m, \lambda_0) \cup R^\pm(n, \lambda_0)\bigr) = \left\{P+Q: P \in R^\pm(m, \lambda_0)\setminus\{O\}, Q\in R^\pm(n, \lambda_0)\setminus\{O\} \right\}
\]
Using the relation $
r^\pm(n, \lambda_0) = \frac{|R^\pm(n, \lambda_0)|-1}{2}$,
we deduce that
\[
r^\pm(mn, \lambda_0) - r^\pm(m, \lambda_0) - r^\pm(n, \lambda_0) = 2\,r^\pm(m, \lambda_0)\,r^\pm(n, \lambda_0).
\]

For the convenience of the reader, we now restate Theorem \ref{thm:main1} before proving it.

\begin{thm*}(Main result on the $n$-torsion points of a family of elliptic curves) \par
    Let $\mathbb{F}_q$ be a finite field with characteristic greater than $3$. Let $E$ be a non-isotrivial elliptic curve defined over the function field $K = \mathbb{F}_q(\lambda)$. If $n$ is an odd integer coprime to $q$, and the geometric Galois group of $K(E[n])/K$ is $\operatorname{SL}_2(\mathbb{Z}/n\mathbb{Z})$, we have
    $$\sum_{\lambda_0}r(n,\lambda_0) = (d(n)-1)q+O(\sqrt{q})$$
    where the sum is over all $\lambda_0\in\mathbb{P}^1(\mathbb{F}_q)$ such that $E$ has a good reduction at $\lambda_0$. 
\end{thm*}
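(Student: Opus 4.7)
The plan is to combine the multiplicative relation
\[
|R^{\pm}(mn,\lambda_0)| = |R^{\pm}(m,\lambda_0)|\cdot|R^{\pm}(n,\lambda_0)|
\]
for coprime $m,n$ (from the preceding lemma, which in fact follows from CRT and applies to any automorphism of $E[mn]$ respecting the direct sum decomposition) together with Theorem \ref{thm: main l^k} and the Chebotarev density theorem, using the hypothesis $\Gal(K(E[n])/K(\mu_n)) = \operatorname{SL}_2(\mathbb{Z}/n\mathbb{Z})$ (which by restriction also gives $\Gal(K(E[l_i^{a_i}])/K(\mu_{l_i^{a_i}})) = \operatorname{SL}_2(\mathbb{Z}/l_i^{a_i}\mathbb{Z})$ for each prime-power divisor). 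Since $n$ is odd, $r(n,\lambda_0) = \tfrac{1}{2}(|R^{+}(n,\lambda_0)|+|R^{-}(n,\lambda_0)|) - 1$, and with $q + O(1)$ places of good reduction the theorem reduces to proving
\[
\sum_{\lambda_0}|R^{\pm}(n,\lambda_0)| = d(n)q + O(\sqrt{q}).
\]

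For a prime power $l^k$, Theorem \ref{thm: main l^k} gives $\sum_{\lambda_0} r(l^k, \lambda_0) = kq + O(\sqrt{q})$. The involution $\sigma \mapsto -\sigma$ preserves the Frobenius coset $\det^{-1}(q)$ (since $\det(-I)=1$ for $2\times 2$ matrices) and swaps $R^+$ with $R^-$, so by symmetry $\sum_{\lambda_0}|R^{+}(l^k,\lambda_0)| = \sum_{\lambda_0}|R^{-}(l^k,\lambda_0)| = d(l^k)q + O(\sqrt{q})$, using $d(l^k) = k+1$.

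For general $n = \prod_i l_i^{a_i}$, I would apply Chebotarev to $\Gal(K(E[n])/K)$. Every degree-$1$ Frobenius lies in the coset $\Gal_q := \{\sigma \in \Gal(K(E[n])/K) : \det\sigma = q\}$, which by the Galois-group hypothesis together with CRT (applied both to $E[n] \cong \bigoplus_i E[l_i^{a_i}]$ and to the determinant) decomposes as $\Gal_q \cong \prod_i \Gal_{q,l_i^{a_i}}$. The multiplicative relation, applied to any $\sigma = (\sigma_i)_i \in \Gal_q$, gives $|R^{\pm}(n,\sigma)| = \prod_i |R^{\pm}(l_i^{a_i},\sigma_i)|$, so the Chebotarev sum factors:
\[
\sum_{\sigma \in \Gal_q} |R^{\pm}(n,\sigma)| = \prod_i \sum_{\sigma_i \in \Gal_{q,l_i^{a_i}}} |R^{\pm}(l_i^{a_i},\sigma_i)|.
\]
Running Chebotarev in reverse for each $l_i^{a_i}$ and using the prime-power case identifies the $i$-th inner factor as $d(l_i^{a_i})\cdot|\operatorname{SL}_2(\mathbb{Z}/l_i^{a_i}\mathbb{Z})|$. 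Multiplying, and using $\prod_i|\operatorname{SL}_2(\mathbb{Z}/l_i^{a_i}\mathbb{Z})| = |\operatorname{SL}_2(\mathbb{Z}/n\mathbb{Z})|$ together with $\prod_i d(l_i^{a_i}) = d(n)$, a final application of Chebotarev yields $\sum_{\lambda_0}|R^{\pm}(n,\lambda_0)| = d(n)q + O(\sqrt{q})$, from which the theorem follows.

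The main technical point will be justifying the factorization $\Gal_q \cong \prod_i \Gal_{q,l_i^{a_i}}$: this relies on the hypothesis that $\Gal(K(E[n])/K(\mu_n)) = \operatorname{SL}_2(\mathbb{Z}/n\mathbb{Z})$ combined with the CRT compatibility of $\operatorname{SL}_2$ and $\det$, and some care is needed to ensure that the $O(\sqrt{q})$ Chebotarev errors from the various applications combine cleanly rather than accumulate when the sums are factored and refactored. Apart from this bookkeeping the argument is a direct character-theoretic computation.
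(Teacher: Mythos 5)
Your proposal is correct and follows the same underlying template as the paper — Chebotarev applied to conjugacy-class data, CRT for coprime torsion — but it packages the combinatorics more cleanly. The paper works with $r^\pm$, which is only multiplicative in the awkward form $r^\pm(mn) - r^\pm(m) - r^\pm(n) = 2\,r^\pm(m)\,r^\pm(n)$, and assembles $d(n)$ by induction on the number of distinct prime factors, re-running Chebotarev at each inductive step. You instead observe that $|R^\pm|$ is genuinely multiplicative over coprime factors (by CRT on $E[mn] \cong E[m] \oplus E[n]$ and uniqueness of the decomposition of each torsion point), so the coset decomposition $\Gal_q \cong \prod_i \Gal_{q,l_i^{a_i}}$ lets the entire sum factor in one stroke as
\[
\sum_{\sigma\in\Gal_q}|R^\pm(n,\sigma)| = \prod_i \sum_{\sigma_i\in\Gal_{q,l_i^{a_i}}}|R^\pm(l_i^{a_i},\sigma_i)|.
\]
This eliminates the iterated error-term bookkeeping in the paper's induction and gives the multiplicativity of $d(n)$ for free.

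The one place the write-up is softer than it should be is the phrase ``running Chebotarev in reverse.'' You cannot literally invert the asymptotic $kq + O(\sqrt{q})$ to extract an exact constant for a single fixed $q$; the $O(\sqrt{q})$ is not a priori smaller than a change of $1$ in the constant. What resolves this — and what the proof of Theorem \ref{thm: main l^k} actually establishes — is the exact conjugacy-class identity $\sum_{\sigma\in\Gal_q} r(l^k,\sigma) = k\,|\operatorname{SL}_2(\mathbb{Z}/l^k\mathbb{Z})|$, equivalently $\sum_{\sigma\in\Gal_q}|R^\pm(l^k,\sigma)| = (k+1)\,|\operatorname{SL}_2(\mathbb{Z}/l^k\mathbb{Z})| = d(l^k)\,|\operatorname{SL}_2(\mathbb{Z}/l^k\mathbb{Z})|$; the asymptotic $kq+O(\sqrt{q})$ is just this identity fed once through Chebotarev. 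If you cite or re-derive that exact identity from the conjugacy-class computations in Theorem \ref{thm: main l^k} rather than appealing to an inversion of the asymptotic, your argument closes completely, and Chebotarev then needs to be invoked only once at the very end, which also resolves your concern about error accumulation.
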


\begin{proof}
    We prove the theorem by induction on \(k\), the number of distinct prime factors of \(n\). The base case \(k=1\) is established in Theorem \ref{thm: main l^k}. Suppose the statement holds for $n' = \prod_{i=1}^k l_i^{a_i}$, and consider $n = n'\, l_{k+1}^{a_{k+1}}$,
    where \(l_{k+1}\) is coprime to \(n'\).

    Since \(n', l_{k+1}\) are coprime, we have the injection 
    \[
    \Gal\bigl(K(E[n])/K\bigr) \hookrightarrow \Gal\bigl(K(E[n'])/K\bigr) \times \Gal\bigl(K(E[l_{k+1}^{a_{k+1}}])/K\bigr).
    \]
    which is compatible with $\operatorname{GL}_2(\mathbb{Z}/n\mathbb{Z})\simeq\text{GL}_2(\mathbb{Z}/n'\mathbb{Z})\times\text{GL}_2(\mathbb{Z}/l_{k+1}^{a_{k+1}}\mathbb{Z})$. By using the assumption that the geometric Galois group of $K(E[n])/K$ is $\text{SL}_2(\mathbb{Z}/n\mathbb{Z})$, and comparing the cardinality of both sides, we conclude that 
    \[
    \Gal\bigl(K(E[n])/K\bigr) \simeq \Gal\bigl(K(E[n'])/K\bigr) \times \Gal\bigl(K(E[l_{k+1}^{a_{k+1}}])/K\bigr).
    \]
    % TODO, maybe change here a little bit. Write the above 'compatible with' as a commuting diagram. Maybe add more details here that if Gal(K(E[n])/K) = \Gamma_n, then this also holds for n' and l_{k+1}^{a_{k+1}}. Then, we compare the cardinality of both sides. 

    Let \(\mathfrak{C}^+_i\) be the conjugacy classes in \(\Gal(K(E[n'])/K)\) for which \(r^+(n', \lambda_0) > 0\), and let \(\mathfrak{D}^+_j\) be those in \(\Gal(K(E[l_{k+1}^{a_{k+1}}])/K)\) for which \(r^+(l_{k+1}^{a_{k+1}}, \lambda_0) > 0\). Then, we may view the Cartesian product \(\mathfrak{C}^+_i \times \mathfrak{D}^+_j\) as a conjugacy class in \(\Gal(K(E[n])/K)\). For \(\lambda_0\in\mathbb{P}^1(\mathbb{F}_q)\) with $\Frob_{\lambda_0}\in \mathfrak{C}_i^+\times\mathfrak{D}_j^+$, we have 
    \[
    r^+(n, \lambda_0) - r^+(n', \lambda_0) - r^+(l_{k+1}^{a_{k+1}}, \lambda_0) = 2\,r^+(n', \lambda_0)\,r^+(l_{k+1}^{a_{k+1}}, \lambda_0).
    \]

    Summing over all good \(\lambda_0\) gives
    \begin{align*}
    & \sum_{\lambda_0}\Bigl( r^+(n, \lambda_0) - r^+(n', \lambda_0) - r^+(l_{k+1}^{a_{k+1}}, \lambda_0) \Bigr) \\
    &= \sum_{i}\sum_{j}\left( \frac{|\mathfrak{C}^+_i \times \mathfrak{D}^+_j|}{|\operatorname{SL}_2(\mathbb{Z}/n\mathbb{Z})|}q + O(\sqrt{q}) \right) \cdot \Bigl(2\,r^+(n', \lambda_0)\,r^+(l_{k+1}^{a_{k+1}}, \lambda_0)\Bigr) \\
    &= 2\,\left[\sum_i \left(\frac{|\mathfrak{C}^+_i|}{|\operatorname{SL}_2(\mathbb{Z}/n' \mathbb{Z})|}q \right)r^+(n', \lambda_0)\right] \cdot \left[\sum_j \left(\frac{|\mathfrak{D}^+_j|}{|\operatorname{SL}_2(\mathbb{Z}/l_{k+1}^{a_{k+1}} \mathbb{Z})|}q\right)r^+(l_{k+1}^{a_{k+1}}, \lambda_0)\right] + O(\sqrt{q}) \\
     & = 2\left(\frac{\prod_1^k (a_i+1)-1}{2}\right)\left( \frac{a_{k+1}}{2}\right)q + O(\sqrt q)\\
    &= \frac{1}{2}\left(a_{k+1} \prod_{i=1}^k (a_i+1) - a_{k+1}\right)q + O(\sqrt{q}).
    \end{align*}

    The calculation for the points $P\in E_{\lambda_0}[n]\setminus\{O\}$ with $\text{Frob}_{\lambda_0}(P) = -P$ is the same, and we get 
    \[
    \sum_{\lambda_0}\Bigl( r(n, \lambda_0) - r(n', \lambda_0) - r(l_{k+1}^{a_{k+1}}, \lambda_0) \Bigr)
    = \Bigl(a_{k+1} \prod_{i=1}^k (a_i+1) - a_{k+1}\Bigr)q + O(\sqrt{q}).
    \]
   
    Combining these, we obtain
    \begin{align*}
        \sum_{\lambda_0} r(n, \lambda_0) &= \sum_{\lambda_0}r(n', \lambda_0) + \sum_{\lambda_0}r(l_{k+1}^{a_{k+1}}, \lambda_0) + \sum_{\lambda_0} \Bigl( r(n, \lambda_0) - r(n', \lambda_0) - r(l_{k+1}^{a_{k+1}}, \lambda_0) \Bigr) \\
        &= \Bigl(\prod_{i=1}^k(a_i+1) - 1\Bigr)q + a_{k+1}q + \Bigl(a_{k+1} \prod_{i=1}^k (a_i+1) - a_{k+1}\Bigr)q + O(\sqrt{q})\\
        &= \Bigl( \prod_{i=1}^{k+1} (a_i+1) - 1 \Bigr) q + O(\sqrt{q}) \\
        &= (d(n)-1)q + O(\sqrt{q}).
    \end{align*}
\end{proof}

\section{Conjectures for even $n$ and computational data}
We state a conjecture on the density of pairs of smooth conics satisfying the Poncelet $n$-gon condition including the even integers. 
\subsection{Conjectures}
\begin{conj}
    Let $\mathbb{F}_q$ be a finite field with characteristic greater than $3$, and $n$ be an integer coprime to $q$. We have
    $$\frac{|\Gamma_n|}{|\Psi|} = \frac{d'(n)}{q}+O(q^{-3/2}), $$
    where \(d'(n)\) denotes the number of divisors of \(n\) other than \(1\) and \(2\).
\end{conj}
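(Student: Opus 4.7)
The plan is to extend the framework of Section 3 and Section 4 to handle even $n$, with the main new ingredient being a careful analysis of the Galois representation on the $2$-power torsion of the Legendre curve and its cousins. Since Theorem \ref{thm:n-gon} already establishes the conjecture for odd $n$ (noting that $d'(n) = d(n)-1$ when $n$ is odd), the remaining task is the case $n = 2^k m$ with $k \geq 1$ and $m$ odd, where $d'(n) = d(n) - 2$. The reduction of Lemma \ref{lem:3.1} and Theorem \ref{thm:3.2} still applies, so it suffices to estimate $\frac{1}{2}\sum_{\lambda_0} |\{P \in E_{\lambda_0}[n]\setminus E_{\lambda_0}[2] : x(P) \in \mathbb{F}_q\}|$ for each of the three universal elliptic curves from Theorem \ref{thm:3.2}. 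Decomposing $E[n] = E[2^k]\oplus E[m]$, any such point $P$ decomposes uniquely as $P = P_1 + P_2$, and the lemma at the start of Section 4.4 shows that $x(P)\in \mathbb{F}_q$ forces $\sigma_{q,\lambda_0}(P_i) = \pm P_i$ with a common sign; the excluded $2$-torsion points are precisely those with $P_2 = O$ and $P_1 \in E[2]$.

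The key new step is to determine the geometric Galois group of $K(E[2^k])/K$ for the Legendre curve $E: y^2 = x(x-1)(x-\lambda)$. Since the three $2$-torsion points $(0,0), (1,0), (\lambda,0)$ are already $K$-rational, this group is contained in the principal congruence subgroup
\[
\Gamma(2) = \{M \in \operatorname{GL}_2(\mathbb{Z}/2^k\mathbb{Z}) : M \equiv I \pmod{2}\},
\]
so Yu's theorem (Theorem \ref{thm:Yu}) does not directly apply. I would first analyze $K(E[4])$ explicitly using the duplication formula, which adjoins square roots of $\lambda$, $\lambda-1$, and $\lambda(\lambda-1)$ (modulo the relation among them), and show that the geometric Galois group of $K(E[4])/K(\mu_4)$ is as large as possible, namely the full $\Gamma(2)\cap \operatorname{SL}_2(\mathbb{Z}/4\mathbb{Z})$. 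Then, in analogy with Corollary \ref{cor: l^k_galois_group}, I would use Tate uniformization at $\lambda = 0, 1, \infty$ (places of split multiplicative reduction) to show that the tower $K(E[2^k])$ grows maximally, yielding
\[
\Gal(K(E[2^k])/K(\mu_{2^k})) = \Gamma(2)\cap \operatorname{SL}_2(\mathbb{Z}/2^k\mathbb{Z}).
\]

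Once the $2$-power Galois group is identified, the product decomposition argument of Theorem \ref{thm: Galois_n_torsion} and its corollary should extend (the coprime orders of the nontrivial quotient groups preventing nontrivial intersection), yielding $\Gal(K(E[n])/K(\mu_n)) \simeq \bigl(\Gamma(2)\cap \operatorname{SL}_2(\mathbb{Z}/2^k\mathbb{Z})\bigr) \times \operatorname{SL}_2(\mathbb{Z}/m\mathbb{Z})$. With this in hand, I would apply the Chebotarev density theorem (Theorem \ref{thm:Chebotarev}) class by class, mirroring the case analysis of Theorem \ref{thm: main l^k}. The conjugacy class bookkeeping must account for the subgroup structure of $\Gamma(2)$: each $\operatorname{Frob}_{\lambda_0}$ contributing a nontrivial $r^{\pm}(2^k, \lambda_0)$ corresponds to an eigenvalue $\pm 1$, and the possible canonical forms are indexed by an integer $0 \leq v \leq k$ recording the valuation of the "off-diagonal content", exactly as in Case 3 of Theorem \ref{thm: main l^k} but shifted because the base ($v = 0$) case is absorbed into the rational $2$-torsion. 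A bookkeeping identity of the form
\[
\sum_{\lambda_0} r^{\pm}(2^k, \lambda_0) = (k-1)q + O(\sqrt{q})
\]
should emerge, the $(k-1)$ reflecting the divisors $4, 8, \dots, 2^k$ of $2^k$.

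Finally, combining the $2$-power and odd contributions via the product Galois group and the identity of Section 4.4, $r^{\pm}(2^k m) - r^{\pm}(2^k) - r^{\pm}(m) = 2 r^{\pm}(2^k) r^{\pm}(m)$, and then subtracting the contribution of $E_{\lambda_0}[2]$ to convert between $r(n,\lambda_0)$ and the quantity in Lemma \ref{lem:3.1}, the main term becomes $(d(2^k)-1)(d(m))q + d(m)q + \text{cross terms} - (\text{2-torsion correction})q = (d(n)-2)q + O(\sqrt{q}) = d'(n)q + O(\sqrt{q})$. The main obstacle is the first step: establishing the image of the mod-$2^k$ representation for the Legendre family, since this sits inside $\Gamma(2)$ where the standard "large image" theorems of Serre, Igusa, and Yu do not apply. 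If this can be settled (even conditionally on an effective version of Tate's argument, as in \cite[Corollary 3.7]{Igusa}), the remaining combinatorial and Chebotarev steps follow the template of Theorem \ref{thm: main l^k} and Theorem \ref{thm:main1}.
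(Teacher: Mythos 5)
This statement is a \emph{conjecture}, not a theorem: the paper does not prove it. In Section 5 the paper sketches how the conjecture would follow from certain explicit descriptions of the geometric Galois group of $K(E[2^m])/K$ for each of the three elliptic curves from Theorem \ref{thm:3.2}, and then states plainly: ``we are not able to prove our assumption on the geometric Galois group of $K(E[2^m])/K$.'' Your proposal is essentially the same sketch, and you yourself flag the identical gap (``The main obstacle is the first step: establishing the image of the mod-$2^k$ representation for the Legendre family''). You do not fill that gap; the duplication-formula and Tate-uniformization outline you give is not carried out, and the Igusa-style argument you invoke is only known to force the image to contain a unipotent, which is far from pinning down the full level-$\Gamma(2)$ image. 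So this is not a proof; it is a restatement of the conjecture together with the same conditional reduction that the paper already records.

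Two further points where the proposal is imprecise even modulo the Galois-image assumption. First, you only discuss the Legendre family, but the three families from Theorem \ref{thm:3.2} have \emph{different} geometric Galois groups inside $\operatorname{SL}_2(\mathbb{Z}/2^k\mathbb{Z})$ (the paper's Section 5.1.2 gives an index-$2$ overgroup of $\Gamma(2)$ for the $(2,1,1)$/$(4)$ types, and the $(3,1)$ curves have no rational $2$-torsion at all), and these differences are essential: the per-pencil densities are $3(k-1)/q$, $(k-1)/q$, and $0$ for $n=2^k$, and only the weighted average over pencil types yields $d'(n)/q$. Second, the bookkeeping identity you write, $\sum_{\lambda_0} r^{\pm}(2^k,\lambda_0) = (k-1)q + O(\sqrt{q})$, is not what holds in any single pencil (for the Legendre family the correct total is $\sum_{\lambda_0} r(2^k,\lambda_0) = 3kq + O(\sqrt{q})$ before removing the $3$ rational $2$-torsion points), and the displayed final arithmetic ``$(d(2^k)-1)d(m)q + d(m)q + \text{cross} - (\text{2-torsion correction})q = (d(n)-2)q$'' does not balance: the cross terms are not accounted for, and the $2$-torsion correction is $3$, $1$, or $0$ depending on the pencil, averaging to $1$, not $2$. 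These arithmetic slips would need to be repaired even if the Galois image were known.
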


Further, if we assume the following results on the geometric Galois group of $K(E[2^m])/K$, we can prove the conjecture following the same methods in this paper. We sketch some results based on this assumption, and provide the computational data at the end of this section. 

\subsubsection{Pencil intersection type: \((1,1,1,1)\) or \((2,2)\)}\label{conj_(1,1,1,1)_pencil}

\begin{itemize}
    \item Density of pencil: $\frac{1}{24} + \frac{1}{8} = \frac{1}{6}$.
    \item Corresponding elliptic curve in Theorem \ref{thm:3.2}: $E: y^2 = x(x-1)(x-\lambda)$.
    \item Assumption on the geometric Galois group: $\ker\Bigl(\operatorname{SL}_2(\mathbb{Z}/2^m\mathbb{Z}) \xrightarrow{\mod 2} \operatorname{SL}_2(\mathbb{Z}/2\mathbb{Z})\Bigr)$.
    \item Total number of $\mathbb{F}_q$-roots of the $2^m$-th division polynomial: $\sum_{\lambda_0} r(2^m, \lambda_0) = 3mq + O(\sqrt{q})$.
    \item Density of pairs of smooth conics satisfying the Poncelet $2^m$-gon condition: $\frac{(3m-3)}{q} + O(q^{-3/2})$.
\end{itemize}
Recall Theorem \ref{thm:3.2} implies that we need to remove the $2$-torsion points when counting the number of pairs of conics with a Poncelet $n$-gon. There are three $2$-torsion points on $E_{\lambda_0}$ for each $\lambda_0$. This is why we subtract $3$ when computing the density of pairs of smooth conics satisfying the Poncelet $2^m$-gon condition. 

\subsubsection{Pencil intersection type: \((2,1,1)\) or \((4)\)}

\begin{itemize}
    \item Density of pencil: $\frac{1}{4} + \frac{1}{4} = \frac{1}{2}$.
    \item Corresponding elliptic curve in Theorem \ref{thm:3.2}: $E: y^2 = (x-\lambda)(x^2-b)$ where $b$ is not a square in $\mathbb{F}_q$. .
    \item Assumption on the geometric Galois group: pre-image of $\left\{I, \begin{bmatrix}
    0 & 1 \\ 1 & 0 \end{bmatrix}\right\}$ under $\operatorname{SL}_2(\mathbb{Z}/2^m\mathbb{Z}) \to\operatorname{SL}_2(\mathbb{Z}/2\mathbb{Z})$.
    \item Total number of $\mathbb{F}_q$-roots of the $2^m$-th division polynomial: $\sum_{\lambda_0} r(2^m, \lambda_0) = mq + O(\sqrt{q})$.
    \item Density of pairs of smooth conics satisfying the Poncelet $2^m$-gon condition: $\frac{(m-1)}{q} + O(q^{-3/2}) $.
\end{itemize}

\subsubsection{Pencil intersection type: $(3, 1)$}
\begin{itemize}
    \item Density of pencil: $\frac{1}{4} + \frac{1}{4} = \frac{1}{3}$.
    \item Corresponding elliptic curve in Theorem \ref{thm:3.2}: $E: y^2 = \left(x - \frac{\alpha}{\alpha + \lambda}\right)\left(x - \frac{\alpha'}{\alpha' + \lambda}\right)\left(x - \frac{\alpha''}{\alpha'' + \lambda}\right)$.
    \item Total number of $\mathbb{F}_q$-roots of the $2^m$-th division polynomial: $\sum_{\lambda_0} r(2^m, \lambda_0) = 0$.
    \item Density of pairs of smooth conics satisfying the Poncelet $2^m$-gon condition: $0$.
\end{itemize}

For any $\lambda_0\in\mathbb{P}^1(\mathbb{F}_q)$, the elliptic curve $E_{\lambda_0}$ has no $2$-torsion points with $x$-coordinate defined over $\mathbb{F}_q$, corresponding to the fact that there is no singular conics in this pencil defined over $\mathbb{F}_q$. Therefore, there are no $2^m$-torsion points with $x$-coordinate defined over $\mathbb{F}_q$ for any $m\geq 1$. 

If we take the weighted sum over all pencils, we see that the density of pairs of smooth conics satisfying the Poncelet $2^m$-gon condition is 
$$\frac{1}{6}\frac{3m-3}{q} + \frac{1}{2}\frac{m-1}{q} + O(q^{-3/2})= \frac{m-1}{q}+O(q^{-3/2}) = \frac{d'(2^m)}{q}+O(q^{-3/2})$$.

All of the results above are consistent with our computation in Section \ref{tetragon}. Also, for the Poncelet $8$-gon, the result above suggests that the density of pairs of smooth conics satisfying the Poncelet $n$-gon condition is $\frac{6}{q} + O(q^{-3/2})$ in the pencil of intersection type $(1,1,1,1)$. This also matches the conjecture of Chipalkatti for $n=8$ \cite[Conjecture 4.1]{Chi}. 

\subsection{Computational data}
Our computational data supports the results above, although we are not able to prove our assumption on the geometric Galois group of \(K(E[2^m])/K\).

We performed our computations using SageMath. In our experiments, we fix a finite field \(\mathbb{F}_p\) and consider the Legendre curve $y^2 = x(x-1)(x-\lambda)$. For each \(n\), we compute the sum of the \(\mathbb{F}_q\)-roots of the \(n\)th division polynomial of the reduced curve \(E_{\lambda_0}\) as \(\lambda_0\) ranges over \(\mathbb{F}_q \setminus \{0,1\}\).

\medskip

\textbf{\(n = 4\):}

\begin{center}
\begin{tabular}{c|c}
\(p\) & (Sum of \(\mathbb{F}_q\)-roots to 4th-division polynomial)/\(p\) \\\hline
1487 & 5.98991 \\
1489 & 5.98993 \\ 
1493 & 5.98995 \\ 
1499 & 5.98999 \\ 
1511 & 5.99007 \\ 
1523 & 5.99015 \\ 
1531 & 5.99020 \\
1543 & 5.99028 \\
\end{tabular}
\end{center}

\medskip

\textbf{\(n = 8\):}

\begin{center}
\begin{tabular}{c|c}
\(p\) & (Sum of \(\mathbb{F}_q\)-roots to 8th-division polynomial)/\(p\) \\\hline
1993 & 8.97893 \\
1997 & 8.98498 \\
1999 & 8.98199 \\
2003 & 8.98802 \\
2011 & 8.98807 \\
2017 & 8.97918 \\
2027 & 8.98816 \\
2029 & 8.98521 \\
2039 & 8.98234 \\
2053 & 8.98539 \\
2063 & 8.98255 \\
2069 & 8.98550 \\
\end{tabular}
\end{center}

\section*{Acknowledgment}
I would like to thank Aaron Landesman for convincing me that the geometric Galois group of \(K(E[n])/K\) is \(\operatorname{SL}_2(\mathbb{Z}/n\mathbb{Z})\) when \(E\) is the Legendre curve over \(K = \mathbb{F}_q(\lambda)\), and Chris Hall for providing a reference for the proof of this statement. I am also grateful to my advisor, Nathan Kaplan, for introducing me to this intriguing problem and for the many valuable discussions.

\newpage


\begin{thebibliography}{9}
\bibitem{Adel}
Adelmann, Clemens. \emph{The Decomposition of Primes in Torsion Point Fields}. New York: Springer, 2001. 

\bibitem{Igusa}
Bandini, Andrea, Ignazio Longhi, and Stefano Vigni. "Torsion Points on Elliptic Curves over Function Fields and a Theorem of Igusa." \emph{Expositiones mathematicae} 27.3 (2009): 175–209.

\bibitem{Chi}
Chipalkatti, Jaydeep. "On the Poncelet Triangle Condition over Finite Fields." \emph{Finite fields and their applications} 45 (2017): 59–72.

\bibitem{DasOConnor} Das, Ronno and O'Connor, Ben. ``Configurations of noncollinear points in the projective plane''.  \emph{Algebraic \& Geometric Topology}. 21 (2021) 1941--1972.

\bibitem{Hall}
Cojocaru, Alina Carmen, and Chris Hall. "Uniform Results for Serre’s Theorem for Elliptic Curves." \emph{International Mathematics Research Notices} 2005.50 (2005): 3065–3080.



\bibitem{Duke}
Duke, William. “Elliptic Curves with No Exceptional Primes.” Comptes rendus de l’Académie des sciences. \emph{Série I. Mathématique} 325.8 (1997): 813–818.

\bibitem{GH}
Griffiths, Phillip, and Joseph Harris. "On Cayley’s explicit solution to Poncelet’s porism." \emph{Enseignement mathématique} 24.1–2 (1978): 31-40.

\bibitem{Hall2}
Hall, Chris. "Big symplectic or orthogonal monodromy modulo l." \emph{Duke Mathematical Journal} 141 (2008): 179-203.

\bibitem{Hirschfeld}
Hirschfeld, J. W. P. (James William Peter). \emph{Projective Geometries over Finite Fields}. Oxford: Clarendon Press, 1998.

\bibitem{Chebota}
Jarden, Moshe. "The Čebotarev Density Theorem for Function Fields: An Elementary Approach." \emph{Mathematische annalen} 261.4 (1982): 467–475. 

\bibitem{Lang}
Lang, Segre. \emph{Algebra}. in: Graduate Texts in Mathematics, revised third ed., vol. 211, Springer, New York, 2002. 

\bibitem{Mazur}
Mazur, B, and D Goldfeld. "Rational Isogenies of Prime Degree." \emph{Inventiones Mathematicae} 44.2 (1978): 129–162.

\bibitem{Serre}
Serre, Jean-Pierre. "Propri t s galoisiennes des points d’ordre fini des courbes elliptiques." \emph{Inventiones Mathematicae} 15.4 (1971): 259–331.

\bibitem{Serre 2}
Serre, Jean-Pierre. \emph{Abelian l-Adic Representations and Elliptic Curves}. New York: CRC Press, 1997.


\end{thebibliography}
\end{document}